\numberwithin{equation}{section}
\newcommand{\nd}{{\ensuremath d}}
\newcommand{\M}{{\mathcal M}_{u,p}}
\newcommand{\B}{\ensuremath{B^s_{p,q}}}
\newcommand{\F}{\ensuremath{F^s_{p,q}}}
\newcommand{\A}{\ensuremath{A^s_{p,q}}}
\newcommand{\at}{{A}_{p,q}^{s,\tau}}
\newcommand{\bt}{{B}_{p,q}^{s,\tau}}
\newcommand{\ft}{{F}_{p,q}^{s,\tau}}
\newcommand{\btt}{{B}_{p,q}^{s,\varphi}}
\newcommand{\ftt}{{F}_{p,q}^{s,\varphi}}
\newcommand{\btts}{{b}_{p,q}^{s,\varphi}}
\newcommand{\ftts}{{f}_{p,q}^{s,\varphi}}
\newcommand{\Att}{{A}_{p,q}^{s,\varphi}}
\newcommand{\att}{{a}_{p,q}^{s,\varphi}}
\newcommand{\ajm}{a_{Q_{j,m}}}
\newcommand{\kjm}{\chi_{Q_{j,m}}}
\newcommand{\lql}{\ell^q(L_\varphi^p(\rd))}
\newcommand{\lpl}{L_\varphi^p(\ell^q(\rd))}
\newcommand{\lz}{\lambda}
\newcommand{\lzjm}{\lambda_{Q_{j,m}}}
\newcommand{\qjm}{Q_{j,m}}
\newcommand{\mq}{\mathcal{Q}}
\newcommand{\dint}{\mathrm{d}}
\newcommand{\Dd}{\;\mathrm{D}}
\newcommand{\supp}{\mathrm{supp}\,}
\newcommand{\eb}{\hookrightarrow}
\newcommand{\nat}{\ensuremath{\mathbb{N}}}
\newcommand{\no}{\ensuremath{\nat_0}}
\newcommand{\real}{\ensuremath{{\mathbb R}}}
\newcommand{\rd}{\ensuremath{{\mathbb R}^\nd}}
\newcommand{\zd}{\ensuremath{\mathbb{Z}^\nd}}
\newcommand{\sd}{{\mathcal S}(\mathbb{R}^\nd)}
\newcommand{\sdd}{{\mathcal S}'(\mathbb{R}^\nd)}
\newcommand{\Gp}{{\mathcal G}_p}
\newcommand{\bmo}{\ensuremath \mathrm{bmo}}
\newcommand{\mhl}{\mathcal{M}_{\mathrm{HL}}}
\newcommand{\jjp}{j_P\vee0}
\newcommand{\fuff}{\mathcal{F}^{-1}(\theta_j\mathcal{F}f)}
\newcommand{\whole}[1]{{\ensuremath\lfloor #1 \rfloor}}
\newtheorem{theorem}{Theorem}[section]
\newtheorem{lemma}[theorem]{Lemma}
\newtheorem{proposition}[theorem]{Proposition}
\theoremstyle{definition}
\newtheorem{remark}[theorem]{Remark}
\newtheorem{example}[theorem]{Example}
\newtheorem{definition}[theorem]{Definition}
\newcommand{\ignore}[1]{}
\title{\bf Generalized Besov-type and Triebel-Lizorkin-type spaces}
\author{Dorothee D. Haroske and Zhen Liu}
\date{}
\begin{document}
\maketitle

\begin{center}
\begin{minipage}{13cm}
{\small {\bf Abstract.}\quad Let $0<p<\infty$, $0<q\leq\infty$, and $s\in\real$. We introduce a new type of generalized Besov-type spaces $B_{p,q}^{s,\varphi}(\mathbb{R}^d)$ and generalized Triebel-Lizorkin-type spaces $F_{p,q}^{s,\varphi}(\mathbb{R}^d)$, where $\varphi$ belongs to the class $\mathcal{G}_p$, that is, $\varphi:(0,\,\infty)\rightarrow(0,\,\infty)$ is nondecreasing and $t^{-d/p}\varphi(t)$ is nonincreasing in $t>0$. We establish several properties, including some embedding properties, of these spaces. We also obtain the atomic decomposition of the spaces $B_{p,q}^{s,\varphi}(\mathbb{R}^d)$ and $F_{p,q}^{s,\varphi}(\mathbb{R}^d)$.}
\end{minipage}
\end{center}

\section{Introduction}
In 1938, Morrey \cite{mor38} introduced the concept of Morrey spaces $\mathcal{M}_{u,p}(\rd)$, $0<p\leq u<\infty$, to study the local behavior of solutions to second order elliptic partial differential equations. These spaces can also be considered as extension of the scale of Lebesgue spaces, since for $p=u$ one has $\mathcal{M}_{p,p}(\rd) = L^p(\rd)$. They are also part of a wider class of Morrey-Campanato spaces, cf. \cite{Pee}.
Motivated by the famous paper \cite{KY} of Kozono and Yamazaki who introduced Besov-Morrey spaces $\mathcal{N}^s_{u,p,q}(\rd)$ and used them to study Navier-Stokes equations, the study of smoothness spaces based on Morrey spaces (instead of $L^p$ spaces) was strongly enhanced.  In \cite{TX} Tang and Xu introduced the corresponding Triebel-Lizorkin-Morrey spaces, thanks to establishing the Morrey version of the  Fefferman-Stein vector-valued inequality. We refer for references, historic details and further results to the monographs \cite{ysy10}, the nice surveys \cite{s12,s13}, and also the more recent paper \cite{ht22}. As mentioned above, when $p=u$, one regains the (classical) Besov and Triebel-Lizorkin spaces, $\B(\rd)$ and $\F(\rd)$, respectively. For more information about Besov spaces and Triebel-Lizorkin spaces, we refer to Triebel's series of monographs \cite{t83,t92,t06,t20}.
Another famous approach to \textit{smoothness Morrey spaces} are the (inhomogeneous) Besov-type and Triebel-Lizorkin-type spaces $\bt(\rd)$, $\ft(\rd)$, $s\in\real$, $0<p,q\leq\infty$ (with $p<\infty$ in case of $\ft(\rd)$), $\tau\geq 0$, which were introduced and intensively studied in \cite{ysy10}.
Their homogeneous versions were previously investigated by El Baraka in \cite{ElBaraka1,ElBaraka2, ElBaraka3}, and also by Yuan and Yang \cite{yy08,yy10}.  Considering $\tau=0$, one recovers the classical Besov and Triebel-Lizorkin spaces again. These two scales of  \textit{smoothness Morrey spaces} are closely connected, as has been studied in many papers already, we refer to the above references for further details.

Now we return to the starting point, the Morrey spaces $\M(\rd)$, $0<p\leq u<\infty$. In the early 1990s, Mizuhara \cite{miz91} and Nakai \cite{nak94} introduced the generalized Morrey spaces $\mathcal{M}_{\varphi,p}(\rd)$, $0<p<\infty$, and $\varphi: (0, \infty)\rightarrow (0, \infty)$, independently.
Such spaces are sometimes more appropriate to describe the boundedness properties of operators than the classical Morrey spaces. A natural condition in the study of spaces $\mathcal{M}_{\varphi,p}(\rd)$ is the so-called $\Gp$ condition, that is, the function $\varphi$ belongs to the class $\mathcal{G}_p$, when $\varphi:(0,\infty)\rightarrow (0,\infty)$ is nondecreasing and $t^{-d/p}\varphi(t)$ is nonincreasing in $t>0$. Obviously the function $\varphi(t)=t^{d/u}$, $0<p\leq u<\infty$, satisfies this assumption, and in that case we recover the classical Morrey spaces.  We refer to \cite{sdh20a,sdh20b} for further information about the spaces and the historical remarks.
In \cite{nns16} Nakamura, Noi and Sawano introduced the generalized Besov-Morrey spaces  $\mathcal{N}_{\varphi,p,q}^s(\rd)$, cf. also \cite{AGNS}. They also proved the  atomic decomposition theorem for the spaces. The generalized Besov-Morrey spaces cover Besov-Morrey spaces and local Besov-Morrey spaces considered by Triebel \cite{t13,t14} as special cases. Quite recently, in \cite{hms22} also the wavelet decomposition and some embedding results for such spaces were obtained.

In this paper, we are introduce function spaces related to the generalized Morrey spaces in the spirit of the Besov-type and Triebel-Lizorkin-type spaces $\bt(\rd)$ and $\ft(\rd)$, but replacing this time the Morrey parameter $\tau\geq 0$ by an appropriate function $\varphi\in \Gp$. For that reason we shall call these spaces {\em generalized Besov-type spaces} $B_{p,q}^{s,\varphi}(\rd)$ and {\em generalized Triebel-Lizorkin-type spaces} $F_{p,q}^{s,\varphi}(\rd)$.

Note that in \cite{lsu13} Liang, Yang, Yuan, Sawano and Ullrich considered another type of generalized Besov-type spaces and Triebel-Lizorkin-type spaces via a general quasi-Banach space.
Moreover, Yang, Yuan and Zhuo \cite{yyz14} replaced the Lebesgue quasi-norm by the Musielak-Orlicz quasi-norm to generalize Besov-type spaces and Triebel-Lizorkin-type spaces. But this is different from our approach here. However, we stick to our notation of generalized Besov-type or Triebel-Lizorkin-type spaces, as later we shall also investigate their relation to Besov and Triebel-Lizorkin spaces built upon generalized  Morrey spaces which are called nowadays generalized Besov-Morrey spaces or generalized Triebel-Lizorkin-Morrey spaces, respectively. So this wording seems very  appropriate to us, despite the possible confusion with the spaces studied in \cite{lsu13}.

Let $\varphi\in\Gp$, $0<p<\infty$. Then we replace $|P|^\tau$ in the definition of the spaces $\bt(\rd)$ and $\ft(\rd)$,  cf. \cite[Definition 2.1]{ysy10} or Definition~\ref{def-Atau} below, by $\varphi(\ell(P))$, where $P$ is some dyadic cube with side length $\ell(P)$ and volume $|P|=\ell(P)^d$.  Then $\varphi(t)=t^{d\tau}$ belongs to $\Gp$ for all $\tau\in [0,\frac1p]$. Thus the new spaces $B_{p,q}^{s,\varphi}(\rd)$ coincide with $\bt(\rd)$ for all such parameters $\tau$, in particular, including $\tau=0$, that is, $\varphi\equiv 1$ where we arrive at $\B(\rd)$ again. The similar statement holds for spaces of Triebel-Lizorkin type.

We study basic properties of these new spaces, first embeddings results, and finally conclude our paper with the corresponding atomic decomposition theorem.
We always relate our results with the preceding results for special functions $\varphi$, and consider occasionally further examples of $\varphi\in\Gp$.

Here we essentially rely on findings and methods available for Besov-type and Triebel-Lizorkin-type spaces, as well as those developed for generalized Morrey spaces. Our approach is then to combine some ideas inspired by \cite{ysy10,nns16} with several techniques from classical theories of Besov spaces and Triebel-Lizorkin spaces in \cite{t83}.
Our results partly answer an open problem about generalizations posed in Sickel's survey \cite{s13}.

The paper is organized as follows. In Section~\ref{sec-prelim} we recall basic concepts and well-known definitions for later use.
Section~\ref{sec-fnt-s} concentrates on the underlying spaces $\lql$ and $\lpl$ which we later use for the definition of the spaces $B_{p,q}^{s,\varphi}(\rd)$ and $F_{p,q}^{s,\varphi}(\rd)$, respectively. We study fundamental properties like the boundedness of the Hardy-Littlewood maximal operator in these spaces.
In Section~\ref{sec-gen-type-sp} we define the generalized Besov-type spaces $B_{p,q}^{s,\varphi}(\rd)$ and generalized Triebel-Lizorkin-type spaces $F_{p,q}^{s,\varphi}(\rd)$ and establish several properties, especially embedding properties, of these function spaces. Finally, in Section~\ref{sec-atoms}, we obtain the atomic decomposition of $B_{p,q}^{s,\varphi}(\rd)$ and $F_{p,q}^{s,\varphi}(\rd)$, cf. Theorem~\ref{adbf}.

\section{Basic concepts}\label{sec-prelim}

First we fix some notation. Let $\nat$ be the collection of all natural numbers and $\no:=\nat\cup\{0\}$. Let $\rd$ be $d$-dimensional Euclidean space. By $\zd$ we denote the set of all lattice points in $\rd$ having integer components.
Let $\mathcal{S}(\rd)$ be the space of all Schwartz functions on $\rd$ endowed with the classical topology and denote by $\mathcal{S}'(\rd)$ its topological dual, namely, the space of all continuous linear functionals on $\mathcal{S}(\rd)$ endowed with the weak $\ast$-topology.
If $f\in\sd$, then $\mathcal{F}f(x):=(2\pi)^{-\frac{d}{2}}\int_{\rd}\mathrm{e}^{-ix\cdot\xi}f(\xi)\,\dint\xi$, $x\in\rd$, denotes the Fourier transform of $f$. The inverse Fourier transform of $f$ is given by $\mathcal{F}^{-1}f(\xi):=(2\pi)^{-\frac{d}{2}}\int_{\rd}\mathrm{e}^{ix\cdot\xi}{f}(x)\,\dint x$, $\xi\in\rd$.
For each cube $Q\subset\rd$, we denote \ignore{by $c(Q)$ the center of $Q$ and}  by $\ell(Q)$ the side length of $Q$ such that $|Q| = \ell(Q)^d$ is the  volume of the cube $Q$. For $x\in\rd$ and $r\in(0,\infty)$ we denote by $Q(x,r)$ the compact cube centred at $x$ with side length $r$, whose sides are parallel to the axes of coordinates.
Let $\mathcal{Q}$ denote the set of all dyadic cubes in $\rd$, namely, $\mathcal{Q}:=\{Q_{j,k}:=2^{-j}([0,1)^d+k):j\in\mathbb{Z}, k\in\zd\}$. For all $Q\in\mathcal{Q}$, let $j_Q:=-\log_2\ell(Q)$, and let $j_Q\vee 0:=\max(j_Q,\,0)$.
All unimportant positive constants will be denoted by $C$, occasionally with subscripts.
By the notation $A\lesssim B$, we mean that there exists a positive constant $c$ such that $A\leq c B$, whereas the symbol $A\sim B$ stands for $A\lesssim B\lesssim A$. Given two (quasi-)Banach spaces $X$ and $Y$, we write $X\eb Y$ if $X\subset Y$ and the natural embedding of $X$ into $Y$ is continuous.
Let $a\in\real$. We define $a_+:=\max(a,0)$ and $\whole{a}:=\max\{k\in\mathbb{Z}: k\leq a\}$. Finally, set $\sigma_p:=\nd(\frac{1}{p}-1)_+$ and $\sigma_{p,q}:=\max(\sigma_p,\sigma_q)$.

Let us now recall the definitions of Besov spaces, Triebel-Lizorkin spaces, Besov-type spaces and Triebel-Lizorkin-type spaces.
Let $\theta_0\in\sd$ with
\begin{align}\label{3e1}
   \theta_0(x)=1\quad\text{if}\quad |x|\leq 1\qquad\text{and}\qquad
  \theta_0(x)=0\quad\text{if}\quad |x|\geq 3/2,
\end{align}
and let
\begin{align}\label{3e2}
    \theta_k(x)=\theta_0(2^{-k}x)-\theta_0(2^{-k+1}x),\quad x\in\rd,\quad k\in\nat.
\end{align}
Since $\sum_{j=0}^{\infty}\theta_j(x)=1$, for $x\in\rd$, then $\theta:=\{\theta_j\}_{j\in\no}$ forms a \emph{smooth dyadic resolution of unity}.

Given a sequence of measurable functions $G=\{g_j\}_{j\in\no}$, we define
\begin{align*}
    \|G\mid \ell^q(L^p(\rd))\|:=\left(\sum_{j=0}^\infty\|g_j\mid L^p(\rd)\|^q\right)^\frac{1}{q},\quad
    \|G\mid L^p(\ell^q(\rd))\|:=\left\|\left(\sum_{j=0}^\infty|g_j|^q\right)^\frac{1}{q}\mid L^p(\rd)\right\|
\end{align*}
with the usual modifications for $q=\infty$.

\begin{definition}[\cite{t83}]
    Let $s\in\real$, $0<q\leq\infty$ and $\theta=\{\theta_j\}_{j\in\no}$ be the above dyadic resolution of unity.
    \begin{enumerate}[\bfseries\upshape  (i)]
        \item Let $0<p\leq\infty$. The Besov space $B_{p,q}^s(\rd)$ is defined to be the set of all $f\in\sdd$ such that
        \begin{align*}
          \|f\mid B_{p,q}^s(\rd)\|:=\|2^{j s}\fuff\mid \ell^q(L^p(\rd))\|<\infty
        \end{align*}
        \item Let $0<p<\infty$. The Triebel-Lizorkin space $F_{p,q}^s(\rd)$ is defined to be the set of all $f\in\sdd$ such that
        \begin{align*}
          \|f\mid B_{p,q}^s(\rd)\|:=\|2^{j s}\fuff\mid L^p(\ell^q(\rd))\|<\infty
        \end{align*}
with the usual modifications for $q=\infty$.
    \end{enumerate}
\end{definition}

\begin{remark}
The spaces $\B(\rd)$ and $\F(\rd)$ are independent of the choice of the dyadic resolution of unity $\theta=\{\theta_j\}_{j\in\no}$.
  Recall that the Besov spaces coincide in case of $p=q=\infty$, $s>0$, with the H\"older-Zygmund spaces $\mathcal{C}^s(\rd)$, in the sense of equivalent norms,
  \begin{equation}
    B^s_{\infty,\infty}(\rd) = \mathcal{C}^s(\rd), \quad s>0,
    \end{equation}
    cf. \cite[Thm. 2.5.7]{t83}. Obviously, $B^s_{p,p}(\rd)=F^s_{p,p}(\rd)$ for all admitted $s$ and $p$. Moreover, the Triebel-Lizorkin spaces cover the (fractional) Sobolev spaces for $1<p<\infty$, $s\in\real$, $k\in\no$, that is,
    \begin{equation}
      F^s_{p,2}(\rd)=H^s_p(\rd), \quad F^k_{p,2}(\rd) = W^k_p(\rd).
    \end{equation}
For more properties and characterizations of the above spaces we refer the reader to \cite{fj90} and the series of monographs \cite{t83,t92,t06} and references therein. Note that we usually assume $p<\infty$ in case of the spaces $F^s_{p,q}(\rd)$, though some of the properties remain valid in case of $p=\infty$, too, cf. \cite{t20}, but more care is needed.
\end{remark}

Similarly, assume that $0\leq\tau<\infty$, given a sequence of measurable functions $G=\{g_j\}_{j\in\no}$, we define
\begin{align*}
    \|G\mid \ell^q(L_\tau^p(\rd))\|:=\sup_{P\in\mq}\frac{1}{|P|^\tau}\left\{\sum_{j=\jjp}^\infty\left[\int_P
            |g_j(x)|^p\,\dint x\right]^\frac{q}{p}\right\}^\frac{1}{q},\\
    \|G\mid L_\tau^p(\ell^q(\rd))\|:=\sup_{P\in\mq}\frac{1}{|P|^\tau}\left\{\int_P\left[\sum_{j=\jjp}^\infty
            |g_j(x)|^q\right]^\frac{p}{q}\,\dint x\right\}^\frac{1}{p}.
\end{align*}

\begin{definition}[\cite{ysy10}]\label{def-Atau}
    Let $s\in\real$, $0<q\leq\infty$, $0\leq\tau<\infty$ and $\theta=\{\theta_j\}_{j\in\no}$ be the above dyadic resolution of unity.
    \begin{enumerate}[\bfseries\upshape  (i)]
        \item Let $0<p\leq\infty$. The Besov-type space $B_{p,q}^{s,\tau}(\rd)$ is defined by the set of all $f\in\sdd$ such that
        \begin{align*}
          \|f\mid B_{p,q}^{s,\tau}(\rd)\|:=\|2^{j s}\fuff\mid \ell^q(L_\tau^p(\rd))\|<\infty.
        \end{align*}
        \item Let $0<p<\infty$. The Triebel-Lizorkin-type space $F_{p,q}^{s,\tau}(\rd)$ is defined by the set of all $f\in\sdd$ such that
        \begin{align*}
          \|f\mid F_{p,q}^{s,\tau}(\rd)\|:=\|2^{j s}\fuff\mid L_\tau^p(\ell^q(\rd))\|<\infty.
        \end{align*}
    \end{enumerate}
\end{definition}

\noindent{\em Convention.}~We adopt the nowadays usual custom to write $\A(\rd)$ instead of $\B(\rd)$ or $\F(\rd)$, and $\at(\rd)$ instead  of $\bt(\rd)$ or $\ft(\rd)$, respectively, when both scales of spaces are meant simultaneously in some context.

\begin{remark}
  It is easily seen that $B_{p,q}^{s,0}(\rd)=B_{p,q}^s(\rd)$ and $F_{p,q}^{s,0}(\rd)=F_{p,q}^s(\rd)$.
It is also known from \cite[Proposition~2.6]{ysy10} that
\begin{equation} \label{010319}
A^{s,\tau}_{p,q}(\rd) \hookrightarrow B^{s+d(\tau-\frac1p)}_{\infty,\infty}(\rd).
\end{equation}
This result was even strengthened in \cite{yy13b} as follows. Assume that $s\in\real$, $\tau \geq 0$, $0<p,q\leq\infty$ (with $p<\infty$ in the $F$-case). If either $\tau>\frac1p$ or $\tau=\frac1p$ and $q=\infty$, then \begin{equation}
  \at(\rd) = B^{s+d(\tau-\frac1p)}_{\infty,\infty}(\rd).
\end{equation}
Moreover, in the limiting case, if $p,q\in(0,\infty)$ and $\tau=\frac1p$, then
	\begin{equation}\label{ftbt}
	F^{s,\, \frac{1}{p} }_{p\, ,\,q}(\rd) \, = \, F^{s}_{\infty,\,q}(\rd)\, = \, B^{s,\, \frac1q }_{q\, ,\,q}(\rd) \, ;
	\end{equation}
	cf. \cite[Propositions~3.4,~3.5]{s12} and \cite[Remark~10]{s13}.
For more properties and characterizations as well as further remarks about the history of the above spaces we refer the reader to \cite{ysy10,s12,s13} and the references therein.
      \end{remark}

\begin{remark}\label{bmo-def}
Recall that the space $\bmo(\rd)$ is covered by the above scale. More precisely, consider the local (non-homogeneous) space of functions of bounded mean oscillation, $\bmo(\rd)$, consisting of all locally integrable
functions $\ f $ satisfying that
\begin{equation*}
 \left\| f \right\|_{\bmo}:=
\sup_{|Q|\leq 1}\; \frac{1}{|Q|} \int\limits_Q |f(x)-f_Q| \dint x + \sup_{|Q|>
1}\; \frac{1}{|Q|} \int\limits_Q |f(x)| \dint x<\infty,
\end{equation*}
where $ Q $ appearing in the above definition runs over all cubes in $\rd$, and $ f_Q $ denotes the mean value of $ f $ with
respect to $ Q$, namely, $ f_Q := \frac{1}{|Q|} \;\int_Q f(x)\dint x$,
cf. \cite[2.2.2(viii)]{t83}. The space $\bmo(\rd)$ coincides with $F^{0}_{\infty, 2}(\rd)$,  cf. \cite[Theorem~2.5.8/2]{t83}. 
Hence the above result \eqref{ftbt} implies, in particular,
\begin{equation}\label{ft=bmo}
\bmo(\rd)= F^{0}_{\infty,2}(\rd)= F^{0, 1/p}_{p, 2}(\rd)= {B^{0, 1/2}_{2, 2}(\rd)}, \quad 0<p<\infty.
\end{equation}
\end{remark}

\begin{remark}\label{T-hybrid}
In contrast to this approach, Triebel followed the original Morrey-Campanato ideas to develop local spaces $\mathcal{L}^r\A(\rd)$ in \cite{t13}, and so-called `hybrid' spaces $L^r\A(\rd)$ in \cite{t14}, where $0<p<\infty$, $0<q\leq\infty$, $s\in\real$, and $-\frac{d}{p}\leq r<\infty$. This construction is based on wavelet decompositions and also combines local and global elements. However, Triebel proved in \cite[Theorem~3.38]{t14} that
$L^r\A(\rd) = \at(\rd)$, with $\tau=\frac1p+\frac{r}{d}$,
in all admitted cases.
\end{remark}

If one considers the generalized Morrey spaces $\mathcal{M}_{\varphi,p}(\rd)$, $0<p<\infty$, it is usually required that $\varphi\in\Gp$. Though we do not study the space $\mathcal{M}_{\varphi,p}(\rd)$ at the moment, our intention is to investigate in the future the relation between generalized Besov-Morrey spaces, say, and spaces of generalized Besov-type, similarly for the Triebel-Lizorkin spaces. Thus we concentrate on the class $\Gp$ from the very beginning and first recall its definition.

\begin{definition}[\cite{sdh20b}]
    Let $0<p<\infty$. Denote by $\Gp$ the set of all nondecreasing functions $\varphi:(0,\,\infty)\rightarrow(0,\,\infty)$ such that
\begin{align}\label{gpi}
t^{-\frac{d}{p}}\varphi(t)\geq s^{-\frac{d}{p}}\varphi(s),
\end{align}
for all $0<t\leq s<\infty$.
\end{definition}

\begin{remark}
  A justification for the use of the class $\Gp$ comes from \cite[Lemma 2.2]{nns16}. Let $0<p<\infty$ and $\varphi:(0,\infty)\rightarrow [0,\infty)$ be a function satisfying $\varphi(t_0)\neq 0$ for some $t_0>0$. Then $\mathcal{M}_{\varphi,p}(\rd)\neq \{0\}$ if, and only if, $\sup_{t>0} \varphi(t)  \min (t^{-\frac{\nd}{p}},1) < \infty$. Moreover, if $\displaystyle\sup_{t>0} \varphi(t)  \min (t^{-\frac{d}{p}},1) < \infty$, then there exists $\varphi^*\in\Gp$ such that $\mathcal{M}_{\varphi,p}(\rd)  = {\mathcal M}_{\varphi^*,p}(\rd) $ in the sense of equivalent (quasi-)norms.
  One can easily check that $\mathcal{G}_{p_1}\subset \mathcal{G}_{p_2}$ if $0<p_2\le p_1<\infty$. Note that we may often assume $\varphi(1)=1$ for convenience.
\end{remark}

We refer the reader to \cite[Section 12.1.2]{sdh20b} for more details about the class $\Gp$. Here we illustrate some examples from \cite{sdh20b} for later use.
Other examples   can be found e.g. in \cite[Ex.~3.15]{Saw18}.

\begin{example}
  \begin{enumerate}[\bfseries\upshape  (i)]
\item Let $u\in\real$, $0<p<\infty$ and let $\varphi(t)=t^u$ for $t>0$. Then $\varphi$ belongs to $\Gp$
if, and only if, $0\leq u\leq \frac{d}{p}$.
\item Let $0<u,\,v<\infty$. Then
$$
\varphi_{u,v}(t)=\begin{cases}
t^{\frac{d}{u}},\quad{\text{if}}\quad t\leq 1\\
t^{\frac{d}{v}},\quad{\text{if}}\quad t> 1
\end{cases}
$$
belongs to $\Gp$ with $p=\min(u,v)$. In particular, taking $u=v$, the function $\varphi(t)=t^{\frac{d}{u}}$
belongs to $\Gp$ whenever $0<p\leq u<\infty$.
\item Let $0<p<\infty$, $a\leq 0$ and $L\gg 1$, then $\varphi(t)=t^{\frac{d}{p}}(\log(L+t))^a$ for $t>0$ belongs to $\Gp$.
\item Let $0<p<\infty$, $0<u\ll 1$, and let $\varphi(t)=t^u(\log(e+t))^{-1}$ for $t>0$. Then $\varphi\notin\Gp$.
\end{enumerate}
\end{example}

\section{Some preparation: the spaces $\ell^q(L_\varphi^p(\rd))$ and $L_{\varphi}^p(\ell^q(\rd))$
}\label{sec-fnt-s}

Motivated by \cite{t83} and parallel to Definition~\ref{def-Atau}, we first investigate the following two underlying function spaces, which will be very helpful to study the spaces $B_{p,q}^{s,\varphi}(\rd)$ and $F_{p,q}^{s,\varphi}(\rd)$.

\begin{definition}\label{lpql}
  Let $0<p<\infty$, $0<q\leq\infty$ and $\varphi\in\Gp$.
  \begin{enumerate}[\bfseries\upshape  (i)]
    \item  The space $\ell^q(L_\varphi^p(\rd))$ is defined to be the set of all sequences $G:=\{g_j\}_{j\in\no}$ of measurable functions on $\rd$ such that
\begin{align*}
    \|G\mid\ell^q(L_\varphi^p(\rd))\|:=\sup_{P\in\mq}\frac{1}{\varphi(\ell(P))}\left\{\sum_{j=j_P\vee0}^\infty\left(\int_P|g_j(x)|^p\,\dint x\right)^{\frac{q}{p}}\right\}^{\frac{1}{q}}
\end{align*}
is finite (with the usual modification for $q=\infty$).
    \item Assume that there exists $\varepsilon>0$ such that
    \begin{align}\label{rintc}
        t^{\varepsilon-\frac{d}{p}}\varphi(t)\lesssim r^{\varepsilon-\frac{d}{p}}\varphi(r)
    \end{align}
    holds for $t\geq r$, when $0<q<\infty$. The space $L_{\varphi}^p(\ell^q(\rd))$ is defined to be the set of all sequences $G:=\{g_j\}_{j\in\no}$ of measurable functions on $\rd$ such that
\begin{align*}
    \|G\mid L_{\varphi}^p(\ell^q(\rd)\|:=\sup_{P\in\mq}\frac{1}{\varphi(\ell(P))}\left\{\int_P\left(\sum_{j=j_P\vee0}^\infty|g_j(x)|^q\right)^{\frac{p}{q}}\,\dint x\right\}^{\frac{1}{p}}
\end{align*}
is finite (with the usual modification for $q=\infty$).
\end{enumerate}
\end{definition}

\begin{remark}\label{rem1}
  \begin{enumerate}[\bfseries\upshape  (i)]
    \item If $\varphi(\ell(P)):=1$, then the spaces $\lpl$ and $\lql$ become, respectively, the spaces $L^p(\ell^q(\rd))$ and $\ell^q(L^p(\rd))$. They are quasi-Banach spaces.
    \item  If $\varphi(\ell(P)):=|P|^\tau$, then the spaces $\lql$ and $\lpl$ become, respectively, the spaces  $\ell^q(L^p_\tau(\rd))$ and $L^p_\tau(\ell^q(\rd))$, where $\tau\geq 0$.
    In this case, if $\tau>1/p$ and the summation would start always with $j=0$, then only the function $g=0$ a.e. belongs to such spaces due to Lebesgue's differentiation theorem (or Lebesgue points arguments).
    Thus, it is necessary to start the summation with respect to $j$ in dependence on the size of the dyadic cube $P$; see also \cite[Remark 2.1]{ysy10}.
\end{enumerate}
\end{remark}
\subsection{$\varepsilon$-Assumption}
The $\varepsilon$-assumption \eqref{rintc} is a crucial condition to obtain the vector-valued maximal inequality for the spaces $\lpl$;
see Theorem~\ref{mhlp} (ii) below.
We are interested in its characterization. The following two propositions are corollaries of \cite[Lemma 2]{nak94} and \cite[Proposition 2.7]{nns16}; see also \cite[Section 12.2.1]{sdh20b}.

\begin{proposition}\label{pp1}
    Let $C$ be a positive constant and $\varphi$ be a nonnegative locally integrable function which satisfies for all $r>0$ that
    \begin{align}\label{nintc}
        \int_{r}^\infty\frac{\varphi(t)}{t^{\frac{d}{p}+1}}\,\dint t\leq \frac{C\varphi(r)}{r^\frac{d}{p}}.
    \end{align}
    Then, for all 
    $\varepsilon>0$, 
    \begin{align}\label{nintc1}
        \int_{r}^\infty\frac{\varphi(t)}{t^{\frac{d}{p}+1-\varepsilon}}\,\dint t
        \leq\frac{C}{1-\varepsilon C}\cdot\frac{\varphi(r)}{r^{\frac{d}{p}-\varepsilon}}.
    \end{align}
\end{proposition}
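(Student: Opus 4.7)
The plan is to prove this by a self-improvement/absorption argument: write $t^{\varepsilon}$ as an integral of $s^{\varepsilon-1}$ over $(r,t)$, apply Fubini, and feed the hypothesis \eqref{nintc} back into itself.

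Concretely, I would start from the identity $t^{\varepsilon}=r^{\varepsilon}+\varepsilon\int_r^t s^{\varepsilon-1}\,\dint s$, valid for $t\geq r$, and insert it into the left-hand side of \eqref{nintc1}. This splits the integral into two parts. The first part is $r^{\varepsilon}\int_r^\infty \varphi(t)/t^{d/p+1}\,\dint t$, which by \eqref{nintc} is bounded by $C\varphi(r)/r^{d/p-\varepsilon}$, giving the desired right-hand side up to the constant factor. For the second part, I switch the order of integration via Fubini:
\begin{align*}
\varepsilon\int_r^\infty\frac{\varphi(t)}{t^{d/p+1}}\int_r^t s^{\varepsilon-1}\,\dint s\,\dint t
=\varepsilon\int_r^\infty s^{\varepsilon-1}\int_s^\infty\frac{\varphi(t)}{t^{d/p+1}}\,\dint t\,\dint s.
\end{align*}
Applying \eqref{nintc} to the inner integral with $s$ in place of $r$ bounds this by $\varepsilon C\int_r^\infty \varphi(s)/s^{d/p+1-\varepsilon}\,\dint s$, which is exactly $\varepsilon C$ times the left-hand side of \eqref{nintc1}. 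Denoting $I(r):=\int_r^\infty\varphi(t)/t^{d/p+1-\varepsilon}\,\dint t$, I obtain $I(r)\leq C\varphi(r)/r^{d/p-\varepsilon}+\varepsilon C\, I(r)$; assuming $\varepsilon C<1$ (which is surely the implicit hypothesis, since otherwise the claimed constant is negative or undefined), I can absorb and solve to get \eqref{nintc1}.

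The only delicate point is that the absorption step only works if $I(r)<\infty$, which a priori is exactly what we are trying to control. The cleanest remedy is to carry out the whole computation with $\int_r^\infty$ replaced by $\int_r^R$ for finite $R$. Because $\varphi$ is locally integrable on $(0,\infty)$, the truncated quantity $I_R(r):=\int_r^R\varphi(t)/t^{d/p+1-\varepsilon}\,\dint t$ is finite, the Fubini manipulation still goes through (enlarging the truncated inner integral to the full $\int_s^\infty$ only helps), and the resulting bound $I_R(r)\leq C\varphi(r)/r^{d/p-\varepsilon}+\varepsilon C\,I_R(r)$ can legitimately be solved to give $I_R(r)\leq \frac{C}{1-\varepsilon C}\frac{\varphi(r)}{r^{d/p-\varepsilon}}$. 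Letting $R\to\infty$ and invoking monotone convergence then yields \eqref{nintc1}.

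The main (minor) obstacle I anticipate is precisely this finiteness issue; everything else is a routine Fubini and a single use of the hypothesis. I would also be careful to state the tacit restriction $\varepsilon C<1$, since for $\varepsilon\geq 1/C$ the conclusion as written is not meaningful.
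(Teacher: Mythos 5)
Your proof is correct and lands on the same self-improvement/absorption idea as the paper: express $t^{\varepsilon}$ in a form that lets you apply \eqref{nintc} twice, arrive at $I(r)\leq C\varphi(r)/r^{d/p-\varepsilon}+\varepsilon C\,I(r)$, and absorb. The paper carries this out by integration by parts against $\Phi(r):=\int_r^\infty \varphi(t)t^{-d/p-1}\,\dint t$, working on $[r,R]$ so that the boundary term $-\Phi(R)R^{\varepsilon}\leq 0$ can be discarded before the absorption, and then letting $R\to\infty$; you instead write $t^{\varepsilon}=r^{\varepsilon}+\varepsilon\int_r^t s^{\varepsilon-1}\,\dint s$ and swap the order of integration by Fubini. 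The two manipulations are computationally the same step seen from two angles, and your truncation of the outer integral serves exactly the role the paper's $[r,R]$ window does in justifying the absorption. You also correctly flag the tacit restriction $\varepsilon C<1$, which the paper states only implicitly through the form of the constant in \eqref{nintc1}; since \eqref{nintc} is invariant under replacing $C$ by any larger constant, the honest reading of the proposition is that it holds for $\varepsilon\in(0,1/C)$, and your remark is a useful clarification.
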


\begin{proof}
    Let
    \begin{align*}
        \Phi(r):=\int_{r}^\infty\frac{\varphi(t)}{t^{\frac{d}{p}+1}}\,\dint t,\qquad 0<r<\infty.
    \end{align*}
    For $0<r<R$, by integration by parts and \eqref{nintc}, we have
    \begin{align*}
        \int_{r}^R\frac{\varphi(t)}{t^{\frac{d}{p}+1}}t^\varepsilon\,\dint t
        &=\left[-\Phi(t)t^\varepsilon\right]_r^R+\int_r^R\Phi(t)\varepsilon t^{\varepsilon-1}\,\dint t\\
        &= -\Phi(R)R^\varepsilon+\Phi(r)r^\varepsilon+\varepsilon\int_r^R\Phi(t)t^{\varepsilon-1}\,\dint t\\
        &\leq C\frac{\varphi(r)}{r^\frac{d}{p}}r^{\varepsilon}
        +\varepsilon C\int_r^R\frac{\varphi(t)}{t^{\frac{d}{p}+1}}t^\varepsilon\,\dint t.
    \end{align*}
    Therefore,
    \begin{align*}
        \int_{r}^R\frac{\varphi(t)}{t^{\frac{d}{p}+1-\varepsilon}}\,\dint t
        \leq\frac{C}{1-\varepsilon C}\cdot\frac{\varphi(r)}{r^{\frac{d}{p}-\varepsilon}}.
    \end{align*}
    Let $R\rightarrow\infty$, we obtain \eqref{nintc1}.
\end{proof}

\begin{proposition}\label{pp2}
    Let $\varphi$ be a nonnegative doubling measurable function such that
    $\varphi(s)\sim\varphi(r)$ for all $r, s>0$ with $\frac{1}{2}\leq\frac{r}{s}\leq2$.
    Then for some constants $C, \varepsilon>0$, conditions \eqref{rintc}, \eqref{nintc} and \eqref{nintc1} are equivalent.
    Moreover, if \eqref{rintc}-\eqref{nintc1} are satisfied, then for all $u>0$,
\begin{align}\label{nintc2}
    \int_{r}^\infty\frac{\varphi(t)^u}{t^{\frac{d}{p}u+1}}\,\dint t
    \leq \frac{C_0\varphi(r)^u}{r^{\frac{d}{p}u}},\qquad 0<r<\infty,
\end{align}
where $C_0$ is independent of $r$ and depends only on $u$ and implicit constant in \eqref{rintc}.
\end{proposition}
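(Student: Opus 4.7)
The plan is to establish the three equivalences in a cycle and then derive \eqref{nintc2} from \eqref{rintc} in its pointwise form.

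First I would show \eqref{rintc} $\Rightarrow$ \eqref{nintc}. Rewriting \eqref{rintc} as $\varphi(t)\lesssim (t/r)^{d/p-\varepsilon}\varphi(r)$ for $t\geq r$ and substituting into the integral, one finds
\begin{align*}
\int_r^\infty\frac{\varphi(t)}{t^{d/p+1}}\,\dint t
\lesssim \varphi(r)\,r^{\varepsilon-d/p}\int_r^\infty t^{-\varepsilon-1}\,\dint t
= \frac{1}{\varepsilon}\cdot\frac{\varphi(r)}{r^{d/p}},
\end{align*}
which yields \eqref{nintc} with $C=1/\varepsilon$. The implication \eqref{nintc} $\Rightarrow$ \eqref{nintc1} is precisely Proposition~\ref{pp1} and requires no further work.

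Next I would close the cycle by proving \eqref{nintc1} $\Rightarrow$ \eqref{rintc}. This is where the doubling/local constancy assumption $\varphi(s)\sim \varphi(t)$ for $s\sim t$ is essential: for any $t\geq r$ we have
\begin{align*}
\frac{\varphi(t)}{t^{d/p-\varepsilon}}
\sim \int_t^{2t}\frac{\varphi(s)}{s^{d/p+1-\varepsilon}}\,\dint s
\leq \int_r^\infty\frac{\varphi(s)}{s^{d/p+1-\varepsilon}}\,\dint s
\lesssim \frac{\varphi(r)}{r^{d/p-\varepsilon}},
\end{align*}
where the first equivalence uses $\varphi(s)\sim\varphi(t)$ and $s^{d/p+1-\varepsilon}\sim t^{d/p+1-\varepsilon}$ on $[t,2t]$ (so that the integral is comparable to the integrand times the interval length $t$), and the final step is \eqref{nintc1}. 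This is \eqref{rintc}, with possibly a worse constant.

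Finally, to derive \eqref{nintc2} for arbitrary $u>0$, I would raise the pointwise form $\varphi(t)\lesssim (t/r)^{d/p-\varepsilon}\varphi(r)$ of \eqref{rintc} to the $u$-th power and integrate as in the first step:
\begin{align*}
\int_r^\infty\frac{\varphi(t)^u}{t^{(d/p)u+1}}\,\dint t
\lesssim \varphi(r)^u\,r^{u\varepsilon-(d/p)u}\int_r^\infty t^{-u\varepsilon-1}\,\dint t
= \frac{1}{u\varepsilon}\cdot\frac{\varphi(r)^u}{r^{(d/p)u}}.
\end{align*}
Thus $C_0$ depends only on $u$ and the implicit constant in \eqref{rintc}, as claimed. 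No step is genuinely hard; the only subtle point is the use of the $\varphi(s)\sim\varphi(t)$ hypothesis in the reverse implication \eqref{nintc1} $\Rightarrow$ \eqref{rintc}, where we pass from an integral bound back to a pointwise bound.
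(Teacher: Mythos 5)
Your proof is correct and follows essentially the same route as the paper: the implication \eqref{rintc} $\Rightarrow$ \eqref{nintc} by direct integration, \eqref{nintc} $\Rightarrow$ \eqref{nintc1} via Proposition~\ref{pp1}, the closing implication \eqref{nintc1} $\Rightarrow$ \eqref{rintc} by comparing the integrand to its average over $[t,2t]$ using doubling, and \eqref{nintc2} by applying the pointwise form of \eqref{rintc} to $\varphi^u$ with exponent $(d/p)u$. The only cosmetic difference is that the paper phrases the last step as ``$\varphi(\cdot)^u(\cdot)^{-\frac{d}{p}u}$ satisfies \eqref{rintc}'' rather than writing out the integral, but the content is identical.
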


\begin{proof}
    Assume \eqref{rintc}, then
    \begin{align*}
        \int_r^\infty\frac{\varphi(t)}{t^{\frac{d}{p}+1}}\,\dint t
        =\int_r^\infty\frac{\varphi(t)}{t^{\frac{d}{p}-\varepsilon}}\frac{\dint t}{t^{1+\varepsilon}}
        \lesssim\int_r^\infty\frac{\varphi(r)}{r^{\frac{d}{p}-\varepsilon}}\frac{\dint t}{t^{1+\varepsilon}}
        \sim\frac{\varphi(r)}{r^\frac{d}{p}},
    \end{align*}
    which implies \eqref{nintc}. By Proposition~\ref{pp1}, \eqref{nintc} implies \eqref{nintc1}.
    Finally, we prove \eqref{nintc1} implies \eqref{rintc}. By \eqref{nintc1} and the doubling property of $\varphi$,
    \begin{align*}
        t^{\varepsilon-\frac{d}{p}}\varphi(t)\lesssim\int_t^{2t}y^{\varepsilon-\frac{d}{p}-1}\varphi(y)\,\dint y
        \lesssim r^{\varepsilon-\frac{d}{p}}\varphi(r).
    \end{align*}
    Since $\varphi(\cdot)^u(\cdot)^{-\frac{d}{p}u}$ satisfies \eqref{rintc}, then \eqref{rintc} implies \eqref{nintc2} as well.
\end{proof}

\subsection{Some Properties}
We investigate the boundedness property of the maximal operator given by
\begin{align}\label{maxi}
 \mhl(g)(x)=\sup_{Q\in\mq_x(\rd)}\frac{1}{|Q|}\int_Q|g(z)|\,\dint z,
\end{align}
where $\mq_x(\rd)$ stands for the collection of all cubes that contain $x\in\rd$.

We need following lemma to obtain the main results.

\begin{lemma}\label{usel1}
For all $f\in L^0(\rd)$ and cubes $Q$, we have
\begin{align*}
    \mhl[\chi_{\rd\backslash5Q}f](x)\lesssim\sup_{R\in\mq^\sharp(Q)}\frac{1}{|R|}\int_R|f(z)|\,\dint z
    \lesssim\sum_{k=0}^\infty\frac{1}{|2^{k}Q|}\int_{2^{k}Q}|f(z)|\,\dint z
    \qquad(x\in Q),
\end{align*}
where $\mq^\sharp(Q)$ stands for the collection of all cubes containing $Q$.
\end{lemma}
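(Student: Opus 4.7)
The plan is to establish the two inequalities separately, both via elementary geometric comparisons of cubes; no property of $f$ beyond measurability is used, and neither $\varphi\in\Gp$ nor any function-space structure enters. Write $\ell := \ell(Q)$ for brevity.

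For the first inequality I would fix $x\in Q$ and consider any cube $R'\in\mq_x(\rd)$ which contributes to the maximal average. If $R'\subset 5Q$, then $\chi_{\rd\setminus 5Q}\,\chi_{R'}\equiv 0$, so only those $R'$ that actually meet $\rd\setminus 5Q$ matter. The first step is a \emph{size estimate}: since $5Q$ is centered at the center $c_Q$ of $Q$ with side length $5\ell$, every point of $Q$ has $\ell^\infty$-distance at least $2\ell$ to $\rd\setminus 5Q$, so any cube $R'\ni x$ meeting $\rd\setminus 5Q$ must satisfy $\ell(R')\gtrsim \ell$. The second step is a \emph{covering}: since $R'$ and $Q$ both contain $x$ and $\ell(R')\gtrsim \ell$, the smallest axis-parallel cube $R$ containing $R'\cup Q$ has side length $\ell(R)\leq \ell(R')+\ell\lesssim \ell(R')$; hence $R\in\mq^\sharp(Q)$ with $|R|\lesssim |R'|$. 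Then
\begin{align*}
\frac{1}{|R'|}\int_{R'}\chi_{\rd\setminus 5Q}(z)|f(z)|\,\dint z
\,\leq\, \frac{1}{|R'|}\int_R|f(z)|\,\dint z
\,\lesssim\, \frac{1}{|R|}\int_R|f(z)|\,\dint z,
\end{align*}
and passing to the supremum over admissible $R'$ yields the first inequality.

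For the second inequality I would take an arbitrary $R\in\mq^\sharp(Q)$. Since $R\supset Q\ni c_Q$, the cube $R$ is contained in the cube of side length $2\ell(R)$ centered at $c_Q$. Choosing the smallest $k\in\no$ with $2^k\ell\geq 2\ell(R)$ (or $k=0$ if already $\ell\geq 2\ell(R)$) gives both $R\subset 2^kQ$ and $2^k\ell\leq 4\ell(R)$, so $|2^k Q|\leq 4^d|R|$. Hence
\begin{align*}
\frac{1}{|R|}\int_R|f(z)|\,\dint z
\,\leq\, \frac{4^d}{|2^k Q|}\int_{2^kQ}|f(z)|\,\dint z
\,\leq\, 4^d\sum_{k'=0}^\infty\frac{1}{|2^{k'}Q|}\int_{2^{k'}Q}|f(z)|\,\dint z,
\end{align*}
and taking the supremum over $R$ gives the second inequality.

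The only place where any real work is needed is the geometric lower bound $\ell(R')\gtrsim \ell$ in the first step; beyond that, everything reduces to routine volume comparisons. I expect no subtleties from the $\varphi$-weighted context here, since this lemma is a purely pointwise-geometric statement about the Hardy-Littlewood maximal operator.
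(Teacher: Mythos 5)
Your proof is correct, and the geometry checks out in every step: the lower bound $\ell(R')\geq 2\ell$ for any $R'\ni x$ meeting $\rd\setminus5Q$ is exact (since $x\in Q$ lies at $\ell^\infty$-distance at least $2\ell$ from $\rd\setminus5Q$), the smallest axis-parallel cube $R\supset R'\cup Q$ has $\ell(R)\leq\ell(R')+\ell\leq\tfrac32\ell(R')$, and the nesting argument for the second inequality gives $R\subset2^kQ$ with $|2^kQ|\leq 4^d|R|$. Your route differs from the paper's in how the two inequalities are organized. The paper writes $\rd\setminus 5Q=\bigcup_{k\geq 0}\bigl(10\cdot2^kQ\setminus5\cdot2^kQ\bigr)$ and threads a single chain of estimates through this dyadic annulus decomposition: it enlarges $Q^*\ni x$ to a concentric cube $R$ of double side length (so $Q\subset R$ and $|R|=2^d|Q^*|$, yielding the first $\lesssim$), then pulls the $k$-sum out of the supremum and observes that any $R\in\mq^\sharp(Q)$ meeting the $k$-th annulus has $|R|\gtrsim|10\cdot2^kQ|$. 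You instead prove the two inequalities completely separately: the first by your ``bounding cube'' enlargement (conceptually the same as the paper's concentric doubling, just a different explicit cube), and the second by noting that any $R\supset Q$ already sits inside a single $2^kQ$ of comparable volume, so there is no need for the annulus decomposition at all and the infinite sum is used only in the trivial way of dropping all but one term. This modular version is arguably cleaner — the second inequality becomes a one-line geometric nesting statement independent of the maximal operator — whereas the paper's annulus approach has the advantage of exhibiting, in the course of the proof, exactly which dyadic dilates of $Q$ carry the mass that $\mhl[\chi_{\rd\setminus 5Q}f]$ sees, which is the form actually used later when $\varphi(\ell(2^kP))/\varphi(\ell(P))$ factors appear in the proof of Theorem~\ref{mhlp}.
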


\begin{proof}
The proof is similar to the proof of \cite[Lemma 4.2]{shg15}; see also \cite[Lemma 130]{sdh20a}. For the reader's convenience, we give some details.

For all cubes $Q\ni x$, it is obvious that $\mq_x(\rd)\subseteq\mq^\sharp(Q)$. By \eqref{maxi},
   \begin{align*}
        \mhl[\chi_{\rd\backslash5Q}f](x)=\sup_{Q^\ast\in\mq_x(\rd)}\frac{1}{|Q^\ast|}
        \int_{[\cup_{k=0}^\infty(10\cdot 2^kQ)\backslash(5\cdot 2^kQ)]\cap Q^\ast}|f(z)|\,\dint z.
    \end{align*}
    In order that $x\in Q^\ast\cap Q$ and $Q^\ast\cap(\rd\backslash5Q)\neq\varnothing$, we have $\ell(Q^\ast)\geq 2\ell(Q)$. Thus,
    \begin{align*}
        \mhl[\chi_{\rd\backslash5Q}f](x)=\sup_{Q^\ast\in\mq_x(\rd);\, \ell(Q^\ast)\geq 2\ell(Q)}
        \frac{1}{|Q^\ast|}\int_{[\cup_{k=0}^\infty(10\cdot 2^kQ)\backslash(5\cdot 2^kQ)]\cap Q^\ast}|f(z)|\,\dint z.
    \end{align*}
    For any $Q^\ast\in\mq_x(\rd)$, choose a cube $R$ such that $\ell(R)=2\ell(Q^\ast)$ and $R$ and $Q^\ast$ have the same center.
    Then, $Q^\ast\subset R$, $Q\subset R$ and $R\in\mq_x(\rd)$. Thus,
    \begin{align*}
        \mhl[\chi_{\rd\backslash5Q}f](x)&\leq\sup_{R\in\mq^\sharp(Q)}\frac{2^d}{|R|}
        \int_{[\cup_{k=0}^\infty(10\cdot 2^kQ)\backslash(5\cdot 2^kQ)]\cap R}|f(z)|\,\dint z\\
        &\leq\sum_{k=0}^\infty\sup_{R\in\mq^\sharp(Q)}\frac{2^d}{|R|}
        \int_{(10\cdot 2^kQ \backslash 5\cdot 2^kQ)\cap R}|f(z)|\,\dint z\\
        &\leq\sum_{k=0}^\infty\frac{4^d}{|10\cdot 2^kQ|}\int_{10\cdot 2^kQ}|f(z)|\,\dint z,
    \end{align*}
    which completes the proof of Lemma~\ref{usel1}.
\end{proof}

\begin{theorem}\label{mhlp}
Let $1<p<\infty$, $1<q\leq\infty$ and $\varphi\in\Gp$.
  \begin{enumerate}[\bfseries\upshape  (i)]
    \item  Then for any sequence $\{f_j\}_{j\in\no}$ of $\lql$-functions,
    \begin{align}\label{maxp1}
        \|\{\mhl f_j\}_{j\in\no}\mid\lql\|\lesssim\|\{f_j\}_{j\in\no}\mid\lql\|.
    \end{align}
    \item  Assume in addition that $\varphi$ satisfies \eqref{rintc} when $0<q<\infty$.
    Then for any sequence $\{f_j\}_{j\in\no}$ of $\lpl$-functions,
    \begin{align}\label{maxp2}
        \|\{\mhl f_j\}_{j\in\no}\mid\lpl\|\lesssim\|\{f_j\}_{j\in\no}\mid\lpl\|.
    \end{align}
\end{enumerate}
\end{theorem}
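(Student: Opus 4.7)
The plan is to fix an arbitrary dyadic cube $P\in\mq$ and to bound, uniformly in $P$, the local expression $\frac{1}{\varphi(\ell(P))}\bigl(\sum_{j\ge j_P\vee 0}\|\mhl f_j\,\chi_P\|_p^q\bigr)^{1/q}$ for (i), and its $\lpl$-analogue for (ii). I would decompose each $f_j=f_j\chi_{5P}+f_j\chi_{\rd\backslash 5P}$ and, using the subadditivity $\mhl f_j\le\mhl(f_j\chi_{5P})+\mhl(f_j\chi_{\rd\backslash 5P})$, treat the near and far contributions separately.

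For the near part: for (i) I would use the componentwise $L^p$-boundedness of $\mhl$, $\|\mhl(f_j\chi_{5P})\|_p\lesssim\|f_j\chi_{5P}\|_p$, valid for $1<p<\infty$; for (ii) I would invoke instead the Fefferman--Stein vector-valued maximal inequality $\bigl\|(\sum_j|\mhl g_j|^q)^{1/q}\bigr\|_p\lesssim\bigl\|(\sum_j|g_j|^q)^{1/q}\bigr\|_p$, valid for $1<p<\infty$, $1<q\le\infty$ (with a termwise reduction when $q=\infty$). In either case, covering $5P$ by a bounded number of dyadic cubes $P_l$ of the same side length as $P$ and using that $\varphi(\ell(P_l))=\varphi(\ell(P))$, I can sum over $l$ using the $\lql$ (resp.\ $\lpl$) defining norm to bound the near part by $\lesssim\varphi(\ell(P))\|\{f_j\}\|$.

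For the far part: Lemma~\ref{usel1} gives, for $x\in P$, $\mhl(f_j\chi_{\rd\backslash 5P})(x)\lesssim\sum_{k=0}^\infty|2^kP|^{-1}\int_{2^kP}|f_j|\,\dint z$, and Hölder dominates each average by $|2^kP|^{-1/p}\|f_j\chi_{2^kP}\|_p$. For (i), integrating over $P$, applying Minkowski in $\ell^q$ ($q\ge 1$), and covering each $2^kP$ by a bounded number of dyadic cubes of side $2^k\ell(P)$ — all ancestors of $P$, so their shifted starting index is $\le j_P\vee 0$ — the $\lql$-norm yields $\bigl(\sum_j\|f_j\chi_{2^kP}\|_p^q\bigr)^{1/q}\lesssim\varphi(2^k\ell(P))\|\{f_j\}\|_{\lql}$ and, after summation in $k$, the bound $\lesssim\|\{f_j\}\|_{\lql}\sum_k 2^{-kd/p}\varphi(2^k\ell(P))$. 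For (ii), taking the pointwise $\ell^q$-norm in $j$ (via Minkowski) and then applying Hölder and the $\lpl$ norm analogously produces the same geometric sum.

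The final step is to show $\sum_{k=0}^\infty 2^{-kd/p}\varphi(2^k\ell(P))\lesssim\varphi(\ell(P))$: for (ii) with $q<\infty$ this is exactly the content of the $\varepsilon$-condition \eqref{rintc}, equivalent to \eqref{nintc} and \eqref{nintc2} via Propositions~\ref{pp1}--\ref{pp2}; when $q=\infty$ the geometric series collapses to a supremum already controlled by the $\Gp$ property. The main obstacle will be part (i), which is claimed under $\varphi\in\Gp$ alone: here the naive Minkowski--Hölder argument above may leave a divergent series (e.g.\ for $\varphi(t)=t^{d/p}$), so a sharper dyadic argument will be needed. I expect to replace $\mhl$ by the dyadic maximal operator $M^d$ and to split on $P$ as $M^d f_j\,\chi_P\lesssim M^d(f_j\chi_P)\,\chi_P+\bigl(\sup_{Q\in\mq,\,Q\supsetneq P}\mathrm{avg}_Q|f_j|\bigr)\chi_P$, handling the interior piece by componentwise $L^p$-boundedness of $M^d$ and the ``ancestor'' piece — which is constant on $P$ — without an over-pessimistic Hölder reduction, so that the combination of the shifted index $j\ge j_P\vee 0$ and the nonincreasingness of $t^{-d/p}\varphi(t)$ produces a finite bound from the $\Gp$ property alone.
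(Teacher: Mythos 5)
Your plan for part (ii) reproduces the paper's argument essentially verbatim: near/far decomposition $f_j = f_j\chi_{5P} + f_j\chi_{\rd\setminus 5P}$, the near part via the Fefferman--Stein vector-valued maximal inequality together with $\varphi(\ell(5P))\sim\varphi(\ell(P))$ (which follows from $\Gp$, since $\varphi$ is nondecreasing and $t^{-d/p}\varphi(t)$ is nonincreasing and hence $\varphi$ is doubling), and the far part via the geometric-series bound of Lemma~\ref{usel1}, Minkowski, H\"older, $\Gp$, and the $\varepsilon$-condition \eqref{rintc} to make $\sum_{k}2^{-kd/p}\varphi(2^k\ell(P))/\varphi(\ell(P))\lesssim\sum_k 2^{-k\varepsilon}<\infty$. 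You are a bit more careful than the paper about the covering of the non-dyadic cubes $5P$ and $2^kP$ by same-scale dyadic cubes and about the shift of the starting index $j\ge j_P\vee0\ge j_{2^kP}\vee0$; that is a cosmetic, but genuine, improvement.

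Your diagnosis of part (i) is the substantive point. You are correct that the Minkowski--H\"older route leaves the divergent series $\sum_k 2^{-kd/p}\varphi(2^k\ell(P))/\varphi(\ell(P))$ at the endpoint $\varphi(t)=t^{d/p}$, where each term equals $1$. The paper tries to avoid this by using the \emph{supremum} form of Lemma~\ref{usel1} rather than the geometric sum, arriving at $\frac{|P|^{1/p}}{\varphi(\ell(P))}\bigl\{\sum_{j\ge j_P\vee0}\bigl(\sup_{R\in\mq^\sharp(P)}\frac{1}{|R|}\int_R|f_j|\bigr)^q\bigr\}^{1/q}$, and then claims this is $\lesssim\sup_{R\in\mq^\sharp(P)}\frac{1}{\varphi(\ell(R))}\bigl\{\sum_{j\ge j_R\vee0}\bigl(\int_R|f_j|^p\bigr)^{q/p}\bigr\}^{1/q}$. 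That second step interchanges $\sum_j$ and $\sup_R$ in the wrong direction: one always has $\bigl(\sum_j\sup_k a_{j,k}^q\bigr)^{1/q}\ge\sup_k\bigl(\sum_j a_{j,k}^q\bigr)^{1/q}$, not $\lesssim$, and the inequality as written fails. Your proposed dyadic refinement replaces $\sup_{R\in\mq^\sharp(P)}$ by a supremum over dyadic ancestors, but it will meet exactly the same obstruction, because the cube realising the supremum can vary with $j$ and the $\ell^q$-sum over $j$ then does not collapse to a single ancestor. In fact, the statement of part (i) appears to be false at the endpoint: take $d=1$, $p=q=2$, $\varphi(t)=\sqrt t\in\mathcal{G}_2$, and $f_j=\chi_{[2^j,2^{j+1})}$ for $j\ge 0$. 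Then for each dyadic $Q$ one checks $\frac{1}{\sqrt{\ell(Q)}}\bigl(\sum_{j\ge j_Q\vee0}\int_Q|f_j|^2\bigr)^{1/2}\le1$, so $\|\{f_j\}\|_{\ell^2(L^2_\varphi)}\le1$; but $\mhl f_j(x)\ge\frac{1}{2^{j+1}}\int_0^{2^{j+1}}\chi_{[2^j,2^{j+1})}=\frac12$ for every $x\in[0,1)$ and every $j\ge0$, so with $P=[0,1)$ one gets $\frac{1}{\varphi(1)}\bigl(\sum_{j\ge0}\int_P|\mhl f_j|^2\bigr)^{1/2}\ge\bigl(\sum_{j\ge0}\tfrac14\bigr)^{1/2}=\infty$. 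So you should not expect any clever argument under $\Gp$ alone to save (i) with $q<\infty$; the $\varepsilon$-condition \eqref{rintc} (which precisely excludes $\varphi(t)=t^{d/p}$) is needed for (i) as well, as your geometric-series computation already shows, or one must restrict to band-limited $f_j$, which is what actually occurs in the applications (Theorem~\ref{assth1}, Proposition~\ref{ggl}).
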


\begin{proof}
We first prove (i). Let $P\in\mathcal{Q}$ be a fixed cube. Then it suffices to show
\begin{align*}
    \frac{1}{\varphi(\ell(P))}\left\{\sum_{j=\jjp}^\infty
    \left(\int_P|\mhl f_j(x)|^p\,\dint x\right)^{\frac{q}{p}}
    \right\}^{\frac{1}{q}}\lesssim\|\{f_j\}_{j\in\no}\mid\lql\|.
\end{align*}
Write $f_{j,1}:=\chi_{5P}f_j$ and $f_{j,2}:=f_j-f_{j,1}$. The estimate of $f_{j,1}$ is simple. By $\mhl$ is $L^p(\rd)$-bounded and \eqref{gpi},
\begin{align*}
    \frac{1}{\varphi(\ell(P))}\left\{\sum_{j=\jjp}^\infty\left(\int_P|\mhl f_{j,1}(x)|^p\,\dint x\right)^{\frac{q}{p}}\right\}^{\frac{1}{q}}
    &\leq\frac{1}{\varphi(\ell(P))}\left\{\sum_{j=\jjp}^\infty\left(\int_{\rd}|\mhl f_{j,1}(x)|^p\,\dint x\right)^{\frac{q}{p}}\right\}^{\frac{1}{q}}\\
    &\lesssim\frac{1}{\varphi(\ell(5P))}\left\{\sum_{j=j_{5P}\vee0}^\infty\left(\int_{5P} |f_j(x)|^p\,\dint x\right)^{\frac{q}{p}}\right\}^{\frac{1}{q}}\\
    &\lesssim\|\{f_j\}_{j\in\nat}\mid\lql\|.
\end{align*}
To estimate $f_{j,2}$, we use Lemma~\ref{usel1}, \eqref{gpi} and H\"older's inequality. Then
\begin{align*}
    \frac{1}{\varphi(\ell(P))}\left\{\sum_{j=\jjp}^\infty\left(\int_P|\mhl f_{j,2}(x)|^p\,\dint x\right)^{\frac{q}{p}}\right\}^{\frac{1}{q}}
    &\lesssim\frac{|P|^{\frac{1}{p}}}{\varphi(\ell(P))}\left\{\sum_{j=\jjp}^\infty\left(\sup_{R\in\mq^\sharp(P)}\frac{1}{|R|}\int_R|f_j(z)|\,\dint z\right)^q\right\}^{\frac{1}{q}}\\
    &\lesssim\sup_{R\in\mq^\sharp(P)}\frac{1}{\varphi(\ell(R))}\left\{\sum_{j=j_R\vee0}^\infty\left(\int_R|f_j(z)|^p\,\dint z\right)^{\frac{q}{p}}\right\}^{\frac{1}{q}}\\
    &\lesssim\|\{f_j\}_{j\in\nat}\mid\lql\|.
\end{align*}
Hence we obtain \eqref{maxp1}.

Next we prove (ii). Write again $f_{j,1}:=\chi_{5P}f_j$ and $f_{j,2}:=f_j-f_{j,1}$. From an argument similar to the estimate of $f_{j,1}$ in (i) above and Fefferman-Stein vector-valued inequality \cite{fs71}, we deduce that
\begin{align*}
    \frac{1}{\varphi(\ell(P))}\left\{\int_P\left(\sum_{j=\jjp}^\infty|\mhl f_{j,1}(x)|^q\right)
    ^{\frac{p}{q}}\,\dint x\right\}^{\frac{1}{p}}\lesssim\|\{f_j\}_{j\in\no}\mid\lpl\|.
\end{align*}
To deal with $f_{j,2}$, by Lemma~\ref{usel1}, Minkowski's inequality and H\"older's inequality,
\begin{align}\label{pri}
    \frac{1}{\varphi(\ell(P))}\left\{\int_P\left(\sum_{j=\jjp}^\infty|\mhl f_{j,2}(x)|^q\right)
    ^{\frac{p}{q}}\,\dint x\right\}^{\frac{1}{p}}
    &\lesssim\sum_{k=0}^\infty\frac{|P|^\frac{1}{p}}{|2^kP|}\frac{1}{\varphi(\ell(P))}\int_{2^kP}\left(\sum_{j=j_{2^kP}\vee0}^\infty|f_j(z)|^q\right)^{\frac{1}{q}}\,\dint z\notag \\
    &\lesssim\sum_{k=0}^\infty\frac{|P|^\frac{1}{p}}{|2^kP|^\frac{1}{p}}\frac{\varphi(\ell(2^kP))}{\varphi(\ell(P))}\|\{f_j\}_{j\in\nat}\mid\lpl\|.
\end{align}
From \eqref{rintc}, we see that
\begin{align*}
    \sum_{k=0}^\infty\frac{|P|^\frac{1}{p}}{|2^kP|^\frac{1}{p}}\frac{\varphi(\ell(2^kP))}{\varphi(\ell(P))}
    \lesssim\sum_{k=0}^\infty 2^{-k\varepsilon}\lesssim1.
\end{align*}
Inserting this in \eqref{pri}, we obtain \eqref{maxp2}. Hence we finish the proof of Theorem~\ref{mhlp}.
\end{proof}

\begin{theorem}\label{assth1}
Let $0<p<\infty$, $0<q\leq\infty$, $\varphi\in\Gp$ and $0<r<\min(p,q)$.
Let $\Omega:=\{\Omega_j\}_{j\in\no}$ be a sequence of compact subsets of $\rd$ and $D_j>0$ be the diameter of $\Omega_j$.
  \begin{enumerate}[\bfseries\upshape  (i)]
    \item Then for all $\{f_j\}_{j\in\no}\in\lql$,
    \begin{align}\label{maxs1}
        \left\|\left\{\sup_{z\in\rd}\frac{|f_j(\cdot-z)|}{1+|D_jz|^{\frac{d}{r}}}\right\}_{j\in\no}\mid\lql\right\|
        \lesssim\|\{f_j\}_{j\in\no}\mid\lql\|.
    \end{align}
    \item  Assume in addition that $\varphi$ satisfies \eqref{rintc} when $0<q<\infty$. Then for all $\{f_j\}_{j\in\no}\in\lpl$,
     \begin{align}\label{maxs2}
        \left\|\left\{\sup_{z\in\rd}\frac{|f_j(\cdot-z)|}{1+|D_jz|^{\frac{d}{r}}}\right\}_{j\in\no}\mid\lpl\right\|
        \lesssim\|\{f_j\}_{j\in\no}\mid\lpl\|.
    \end{align}
\end{enumerate}
\end{theorem}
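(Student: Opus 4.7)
The plan is to reduce the vector-valued estimates \eqref{maxs1}--\eqref{maxs2} to the scalar Hardy-Littlewood maximal inequalities of Theorem~\ref{mhlp} via the classical Peetre maximal-function device. As is customary in this setting (and, in view of the intended later application with $\Omega_j=\supp\theta_j$ and $f_j=\mathcal{F}^{-1}(\theta_j\mathcal{F}f)$, should be understood as tacit in the statement), the bound rests on the spectral restriction $\supp\mathcal{F}f_j\subset\Omega_j$; without it the left-hand sides need not be finite. Under this restriction, the pointwise Peetre inequality, cf.\ \cite[Section~1.6]{t83}, yields
\begin{align*}
\sup_{z\in\rd}\frac{|f_j(x-z)|}{1+|D_jz|^{d/r}}\lesssim\bigl[\mhl(|f_j|^r)(x)\bigr]^{1/r},\qquad x\in\rd,
\end{align*}
uniformly in $j\in\no$.

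Next I would rescale the parameters to land in the admissible range of Theorem~\ref{mhlp}. Set $\tilde p:=p/r$, $\tilde q:=q/r$ and $\tilde\varphi:=\varphi^r$. Since $r<\min(p,q)$ we have $\tilde p,\tilde q>1$; a direct check shows $\tilde\varphi\in\mathcal{G}_{\tilde p}$, since $\tilde\varphi$ is nondecreasing and $t^{-d/\tilde p}\tilde\varphi(t)=(t^{-d/p}\varphi(t))^r$ is nonincreasing. Moreover, if $\varphi$ satisfies \eqref{rintc} with some $\varepsilon>0$ at level $p$, raising that inequality to the $r$-th power shows $\tilde\varphi$ satisfies \eqref{rintc} with exponent $r\varepsilon$ at level $\tilde p$. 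Unwinding the definitions also gives the homogeneity identities
\begin{align*}
\|\{g_j\}\mid\ell^q(L_\varphi^p(\rd))\|^r=\|\{|g_j|^r\}\mid\ell^{\tilde q}(L_{\tilde\varphi}^{\tilde p}(\rd))\|,\quad\|\{g_j\}\mid L_\varphi^p(\ell^q(\rd))\|^r=\|\{|g_j|^r\}\mid L_{\tilde\varphi}^{\tilde p}(\ell^{\tilde q}(\rd))\|.
\end{align*}

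With these ingredients in place the conclusion is immediate. For part (i), combining the pointwise Peetre bound with Theorem~\ref{mhlp}(i) applied to $\{|f_j|^r\}\in\ell^{\tilde q}(L_{\tilde\varphi}^{\tilde p}(\rd))$ and then taking $r$-th roots yields \eqref{maxs1}. For part (ii) the same reasoning with Theorem~\ref{mhlp}(ii) in $L_{\tilde\varphi}^{\tilde p}(\ell^{\tilde q}(\rd))$ yields \eqref{maxs2}; here the $\varepsilon$-assumption is precisely what is needed, and, as just recorded, is inherited by $\tilde\varphi$. I expect the main (and really only nonroutine) obstacle to be this parameter bookkeeping, namely verifying that passing from $\varphi$ to $\varphi^r$ preserves both membership in $\mathcal{G}_{\tilde p}$ and the $\varepsilon$-condition, without which Theorem~\ref{mhlp} could not be invoked after the $r$-th power reduction.
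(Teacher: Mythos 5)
Your proposal follows the paper's own argument: reduce via the pointwise Peetre inequality $\sup_z |f_j(x-z)|/(1+|D_jz|^{d/r})\lesssim[\mhl(|f_j|^r)(x)]^{1/r}$ (valid under the spectral restriction $\supp\mathcal{F}f_j\subset\Omega_j$, which the paper also treats as tacit) and then apply Theorem~\ref{mhlp} at the rescaled parameters $p/r,q/r$. Your explicit bookkeeping with $\tilde\varphi=\varphi^r\in\mathcal{G}_{p/r}$ and the verification that \eqref{rintc} is inherited is in fact slightly more careful than the paper, which writes $\ell^{q/r}(L_\varphi^{p/r})$ where $\varphi^r$ is what makes the homogeneity identity exact; otherwise the two proofs coincide.
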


\begin{proof}
By similarity, we only prove (i). Let $\{f_j\}_{j\in\no}\in\lql$, $y^j\in\Omega_j$ and $h_j(x)=e^{-ixy^j}f_j(x)$,
then we have $(\mathcal{F}h_j)(x)=(\mathcal{F}f_j)(x+y^j)$.
Therefore, $\supp\mathcal{F}h_j\subset\Omega_j-y^j$.
If \eqref{maxs1} holds for $\{f_j\}_{j\in\no}\in\lql$, then it also does for $\{h_j\}_{j\in\no}$,
where $\Omega$ is replaced by $\{\Omega_j-y^j\}_{j\in\no}$, and the converse also holds.
Thus, we may assume that $0\in\Omega_j$. Then it is sufficient to prove \eqref{maxs1} with $\Omega_j=\{y:|y|\leq D_j\}$ and $D_j>0$.

If $\{f_j\}_{j\in\no}\in\lql$, then $f_j\in L^{p,\Omega_j}$,
where $L^{p,\Omega_j}:=\{f: f\in\sdd, \supp\mathcal{F}f\subset \Omega_j, \|f\mid L^p(\rd)\|<\infty\}$;
see \cite[p.22]{t83} for details.
If $\phi_j(x)=f_j(D_j^{-1}x)$, then $(\mathcal{F}\phi_j)(x)=D_j^d(\mathcal{F}f_j)(D_jx)$
and $\supp\mathcal{F}\phi_j\subset\{y:|y|\leq1\}$.
Hence, by the arguments in the first step of the proof of Theorem 1.4.1 in \cite{t83}, we obtain for all $x,z\in\rd$,
\begin{align}\label{eq0}
\frac{|\phi_j(x-z)|}{1+|z|^{\frac{d}{r}}}\lesssim[\mhl(|\phi_j|^r)(x)]^{\frac{1}{r}},
\end{align}
where the implicit constant in \eqref{eq0} is independent of $x, z$ and $j\in\no$. From \eqref{eq0}, we conclude that
\begin{align}\label{asse1}
    \frac{|f_j(x-z)|}{1+|D_jz|^{\frac{d}{r}}}\lesssim[\mhl(|f_j|^r)(x)]^{\frac{1}{r}}.
\end{align}
Assume first $q<\infty$. Note that $\frac{p}{r}>1$, $\frac{q}{r}>1$. Then, by \eqref{asse1} and \eqref{maxp1}, we have
\begin{align*}
    \left\|\left\{\sup_{z\in\rd}\frac{|f_j(\cdot-z)|}{1+|D_jz|^{\frac{d}{r}}}\right\}_{j\in\no}\mid\lql\right\|
    &\lesssim\|\left\{\mhl(|f_j|^r)\right\}_{j\in\no}\mid\ell^{\frac{q}{r}}(L_\varphi^{\frac{p}{r}}(\rd))\|^{\frac{1}{r}}\\
    &\lesssim\|\{f_j\}_{j\in\no}\mid\lql\|.
\end{align*}
When $q=\infty$, then $\frac{p}{r}>1$, \eqref{asse1} and \eqref{maxp1} yield
\begin{align*}
    \left\|\left\{\sup_{z\in\rd}\frac{|f_j(\cdot-z)|}{1+|D_jz|^
    {\frac{d}{r}}}\right\}_{j\in\no}\mid\ell^\infty(L_\varphi^p(\rd))\right\|
    &\lesssim\sup_{P\in\mq}\frac{1}{\varphi(\ell(P))}\sup_{j}\left\{
    \int_P|\mhl(|f_j|)^r(x)|^\frac{p}{r}\,\dint x\right\}^{\frac{r}{p}\cdot\frac{1}{r}}\\
    &\lesssim\|\{f_j\}_{j\in\no}\mid\ell^\infty(L_\varphi^p(\rd))\|.
\end{align*}
Hence we complete the proof.
\end{proof}

With the aid of Theorem~\ref{assth1}, we have the following multiplier theorem. Recall that if $s\in\real$, then
\begin{align*}
    H_2^s(\rd):=\{f:f\in\sdd,\,\|f\mid H_2^s(\rd)\|=\|(1+|x|^2)^{\frac{s}{2}}(\mathcal{F}f)(x)\mid L^2(\rd)\|<\infty\}.
\end{align*}

\begin{theorem}\label{thme}
Let $0<p<\infty$, $0<q\leq\infty$, $\varphi\in\Gp$ and $\varkappa>\frac{d}{2}+\frac{d}{\min(p,q)}$.
Let $\Omega:=\{\Omega_j\}_{j\in\no}$ be a sequence of compact subsets of $\rd$ and $D_j>0$ be the diameter of $\Omega_j$.
  \begin{enumerate}[\bfseries\upshape  (i)]
    \item  Then for all $\{f_j\}_{j\in\no}\in\lql$ and all sequences $\{\mu_j(x)\}_{j\in\no}\subset H_2^\varkappa(\rd)$,
    \begin{align*}
        \|\{\mathcal{F}^{-1}(\mu_j\mathcal{F}f_j)\}_{j\in\no}\mid\lql\|
        \lesssim\sup_j\|\mu_j(D_j\cdot)\mid H_2^\varkappa(\rd)\|\cdot\|\{f_j\}_{j\in\no}\mid\lql\|.
    \end{align*}
    \item  Assume in addition that $\varphi$ satisfies \eqref{rintc} when $0<q<\infty$.
    Then for all $\{f_j\}_{j\in\no}\in\lpl$ and all sequences $\{\mu_j(x)\}_{j\in\no}\subset H_2^\varkappa(\rd)$,
     \begin{align*}
        \|\{\mathcal{F}^{-1}(\mu_j\mathcal{F}f_j)\}_{j\in\no}\mid\lpl\|
        \lesssim\sup_j\|\mu_j(D_j\cdot)\mid H_2^\varkappa(\rd)\|\cdot\|\{f_j\}_{j\in\no}\mid\lpl\|.
    \end{align*}
\end{enumerate}
\end{theorem}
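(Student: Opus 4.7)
The plan is to reduce both parts to the single pointwise estimate
\begin{equation*}
|\mathcal{F}^{-1}(\mu_j\mathcal{F}f_j)(x)|\lesssim \|\mu_j(D_j\cdot)\mid H_2^\varkappa(\rd)\|\,\sup_{z\in\rd}\frac{|f_j(x-z)|}{1+|D_j z|^{\nd/r}},
\end{equation*}
for some $r\in(0,\min(p,q))$ chosen so that $\varkappa>\nd/2+\nd/r$; the hypothesis $\varkappa>\nd/2+\nd/\min(p,q)$ leaves open an interval of admissible $r$. Once this pointwise bound is in hand, pulling the factor $\sup_j\|\mu_j(D_j\cdot)\mid H_2^\varkappa(\rd)\|$ outside the $\lql$- respectively $\lpl$-quasi-norm and invoking Theorem~\ref{assth1}(i) respectively (ii)---whose additional condition \eqref{rintc} is exactly the one imposed on $\varphi$ here---will finish the argument.

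To establish the pointwise bound, I would write the multiplier as the convolution $\mathcal{F}^{-1}(\mu_j\mathcal{F}f_j)=(2\pi)^{-\nd/2}(\mathcal{F}^{-1}\mu_j)\ast f_j$ and peel off the maximal factor,
\begin{equation*}
|\mathcal{F}^{-1}(\mu_j\mathcal{F}f_j)(x)|\leq \sup_{z\in\rd}\frac{|f_j(x-z)|}{1+|D_j z|^{\nd/r}}\cdot\int_{\rd}(1+|D_j y|^{\nd/r})|\mathcal{F}^{-1}\mu_j(y)|\,\dint y.
\end{equation*}
Introducing $\nu_j:=\mu_j(D_j\cdot)$ and using the scaling identity $\mathcal{F}^{-1}\mu_j(y)=D_j^{\nd}(\mathcal{F}^{-1}\nu_j)(D_j y)$, the change of variable $z=D_j y$ transforms the remaining integral into $\int_{\rd}(1+|z|^{\nd/r})|\mathcal{F}^{-1}\nu_j(z)|\,\dint z$. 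Cauchy--Schwarz together with Plancherel then yields
\begin{equation*}
\int_{\rd}(1+|z|^{\nd/r})|\mathcal{F}^{-1}\nu_j(z)|\,\dint z\leq\bigl\|(1+|\cdot|^{\nd/r})(1+|\cdot|^2)^{-\varkappa/2}\mid L^2(\rd)\bigr\|\cdot\|\nu_j\mid H_2^\varkappa(\rd)\|,
\end{equation*}
and the weighted $L^2$-norm on the right is finite precisely because $2\varkappa-2\nd/r>\nd$.

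The only genuine subtlety is the joint choice of $r$: it must lie below $\min(p,q)$ so that Theorem~\ref{assth1} applies, and simultaneously satisfy $\varkappa>\nd/2+\nd/r$ so that the Cauchy--Schwarz step converges. Both constraints are compatible thanks to the standing hypothesis $\varkappa>\nd/2+\nd/\min(p,q)$, which leaves an open interval for $r$. Beyond verifying this balance, the argument reduces to a standard pointwise Fourier-multiplier estimate combined with the maximal inequality already established in Theorem~\ref{assth1}, so no further machinery is needed.
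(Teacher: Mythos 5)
Your proposal is correct and mirrors the paper's own argument almost exactly: peel off the Peetre maximal factor from the convolution $(\mathcal{F}^{-1}\mu_j)\ast f_j$, rescale by $D_j$, bound the remaining weighted $L^1$-integral of $\mathcal{F}^{-1}\nu_j$ by the $H_2^\varkappa$-norm via Cauchy--Schwarz (converging precisely because $\varkappa>\tfrac{d}{2}+\tfrac{d}{r}$), and then apply Theorem~\ref{assth1}. The only cosmetic difference is that the paper first establishes a Peetre-maximal version of the pointwise estimate (its~\eqref{usse1}) and then restricts to $z=0$, whereas you bound $|\mathcal{F}^{-1}(\mu_j\mathcal{F}f_j)(x)|$ directly --- a slight streamlining of the same reasoning.
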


\begin{proof}
By similarity, we only prove (i). We claim that if $0<r<\min(p,q)$ and $\varkappa>\frac{d}{2}+\frac{d}{r}$, then
\begin{align}\label{usse1}
    \sup_{z\in\rd}\frac{|\mathcal{F}^{-1}(\mu_j\mathcal{F}f_j)(x-z)|}{1+|D_jz|^\frac{d}{r}}
    \lesssim\sup_{z\in\rd}\frac{|f_j(x-z)|}{1+|D_jz|^\frac{d}{r}}\|\mu_j(D_j\cdot)\mid H_2^\varkappa(\rd)\|.
\end{align}
In fact, by Remark 1.5.1.1 in \cite{t83}, we have
the inequality $1+|D_j(x-y)|^\frac{d}{r}\lesssim(1+|D_j(x-y-z)|^\frac{d}{r})(1+|D_jz|^\frac{d}{r})$
and after appropriate substitution of variables we obtain
\begin{align*}
    |\mathcal{F}^{-1}(\mu_j\mathcal{F}f_j)(x-z)|&\lesssim\int_{\rd}|(\mathcal{F}^{-1}\mu_j)(x-z-y)||f_j(y)|\,\dint y\\
    &\lesssim\sup_{t\in\rd}\frac{|f_j(t)|}{1+|D_j(x-t)|^\frac{d}{r}}\int_{\rd}|(\mathcal{F}^{-1}\mu_j)(x-z-y)|(1+|D_j(x-y)|^\frac{d}{r})\,\dint y\\
    &\lesssim\sup_{z\in\rd}|f_j(x-z)|\int_{\rd}|(\mathcal{F}^{-1}\mu_j)(y)|(1+|D_jy|^\frac{d}{r})\,\dint y.
\end{align*}
Dividing both sides by $1+|D_jz|^\frac{d}{r}$, we get
\begin{align}\label{usee2}
    \sup_{z\in\rd}\frac{|\mathcal{F}^{-1}(\mu_j\mathcal{F}f_j)(x-z)|}{1+|D_jz|^\frac{d}{r}}
    \lesssim\sup_{z\in\rd}\frac{|f_j(x-z)|}{1+|D_jz|^\frac{d}{r}}
    \int_{\rd}|(\mathcal{F}^{-1}\mu_j)(y)|(1+|D_jy|^\frac{d}{r})\,\dint y.
\end{align}
Using $(\mathcal{F}^{-1}\mu_j(D_j\cdot))(y)=D_j^{-d}(\mathcal{F}^{-1}\mu_j)(D_j^{-1}y)$
and $\varkappa>\frac{d}{2}+\frac{d}{r}$, we conclude
\begin{align*}
    \int_{\rd}|(\mathcal{F}^{-1}\mu_j)(y)|(1+|D_jy|^\frac{d}{r})\,\dint y
    &=\int_{\rd}|(\mathcal{F}^{-1}\mu_j(D_j\cdot))(y)|(1+|y|^\frac{d}{r})\,\dint y\\
    &\lesssim\left(\int_{\rd}(1+|y|^\varkappa)^2|(\mathcal{F}^{-1}\mu_j(D_j\cdot))(y)|^2\,\dint y\right)^\frac{1}{2}\\
    &\lesssim\|\{\mu_j(D_j\cdot)\}_{j\in\no}\mid H_2^\varkappa(\rd)\|.
\end{align*}
Inserting this estimate in \eqref{usee2}, we get \eqref{usse1}. Then by \eqref{usse1}, \eqref{maxs1} and the fact that $|\mathcal{F}^{-1}(\mu_j\mathcal{F}f_j)(x)|\leq\sup_{z\in\rd}\frac{|\mathcal{F}^{-1}(\mu_j\mathcal{F}f_j)(x-z)|}{1+|D_jz|^\frac{d}{r}}$, we obtain the desired result.
\end{proof}

\begin{remark}
In Theorem~\ref{thme}, $\mathcal{F}^{-1}(\mu_j\mathcal{F}f_j)$ is well defined; see Remark 1.5.1.1 and the proof of Theorem 1.6.3 in \cite{t83} for more details.
\end{remark}

We also need the following useful property.

\begin{proposition}\label{ggl}
    Let $0<p<\infty$, $0<q\leq\infty$, $\varphi\in\Gp$, $0<\gamma<\infty$ and $\{g_k\}_{k\in\no}$ be a sequence of measurable functions on $\rd$. For all $j\in\no$ and $x\in\rd$, let
    \begin{align*}
        G_j(x):=\sum_{k=0}^\infty2^{-|k-j|\gamma}g_k(x).
    \end{align*}
  \begin{enumerate}[\bfseries\upshape  (i)]
        \item  Then
        \begin{align*}
            \|\{G_j\}_{j\in\no}\mid\lql\|\lesssim\|\{g_k\}_{k\in\no}\mid\lql\|;
        \end{align*}
        \item  Assume in addition that $\varphi$ satisfies \eqref{rintc} when $0<q<\infty$. Then
        \begin{align*}
            \|\{G_j\}_{j\in\no}\mid\lpl\|\lesssim\|\{g_k\}_{k\in\no}\mid\lpl\|.
        \end{align*}
    \end{enumerate}
\end{proposition}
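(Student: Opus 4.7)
My plan is to reduce Proposition~\ref{ggl} to a discrete convolution estimate on the index $j$: bound $\|G_j\|_{L^p(P)}$ on a fixed dyadic cube $P$ by a weighted sum in $k$ of $\|g_k\|_{L^p(P)}$, and then exploit the decay of the kernel $2^{-|k-j|\gamma}$ against the growth of $\varphi$ supplied by the $\Gp$ condition. The two statements (i) and (ii) are parallel; I describe (i) in detail and note the modifications for (ii) at the end.

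Fix a dyadic cube $P$. The $\min(1,p)$-power triangle inequality for $L^p$ (Minkowski when $p\geq 1$) gives $\|G_j\|_{L^p(P)}^{\min(1,p)}\leq\sum_{k\in\no} 2^{-|k-j|\gamma\min(1,p)}\|g_k\|_{L^p(P)}^{\min(1,p)}$; combined with the analogous $\min(1,q)$-trick on the outer $j$-sum, the problem reduces to controlling weighted $\ell^q$-sums of $\|g_k\|_{L^p(P)}$ in $k$. I split the $k$-range into the \emph{near} part $k\geq\jjp$ and the \emph{far} part $k<\jjp$ (the latter is empty when $\jjp=0$). For the near part, the $\lql$-norm of $\{g_k\}$ evaluated on $P$ itself directly controls $(\sum_{k\geq\jjp}\|g_k\|_{L^p(P)}^q)^{1/q}\leq\varphi(\ell(P))\|\{g_k\}\mid\lql\|$, and Young's convolution inequality (or the $\min(1,q)$-trick when $q<1$) absorbs the bounded mass of the kernel; after dividing by $\varphi(\ell(P))$ one obtains the desired bound from this part.

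The main obstacle is the far range $k<\jjp$. For each such $k$, let $P_k\supset P$ denote the unique dyadic ancestor with $\ell(P_k)=2^{-k}$; evaluating the $\lql$-norm on $P_k$ (noting $j_{P_k}\vee 0=k$) yields the pointwise-in-$k$ bound $\|g_k\|_{L^p(P)}\leq\|g_k\|_{L^p(P_k)}\leq\varphi(2^{-k})\|\{g_k\}\mid\lql\|$. Since $j\geq\jjp>k$, the kernel factorises as $2^{-(j-k)\gamma}=2^{-(j-\jjp)\gamma}\cdot 2^{-(\jjp-k)\gamma}$, so summing the first factor over $j\geq\jjp$ (by Minkowski in $\ell^q$ if $q\geq 1$, the $q$-trick otherwise) leaves a bounded constant and a residual factor $2^{-(\jjp-k)\gamma}$ in $k$. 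The $\Gp$-inequality $\varphi(2^{-k})\leq 2^{(\jjp-k)d/p}\varphi(\ell(P))$ then meets this residual, and the resulting $k$-sum becomes a geometric series in $l=\jjp-k$; after normalising by $\varphi(\ell(P))$ the dependence on $P$ cancels and the bound is uniform. Part (ii) proceeds along the same lines, with two modifications: Minkowski's inequality is first applied inside the $L^p$-integral over $P$ to push the $\ell^q$-norm pointwise in $x$, and the sharper $\varepsilon$-estimate $\varphi(2^{-k})\leq C\cdot 2^{(\jjp-k)(d/p-\varepsilon)}\varphi(\ell(P))$ coming from \eqref{rintc} (together with the vector-valued Fefferman-Stein-type considerations already used for Theorem~\ref{mhlp}(ii)) replaces the bare $\Gp$ estimate and is what allows the geometric series to close in the $L^p(\ell^q)$ setting.
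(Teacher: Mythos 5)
Your decomposition is genuinely different from the paper's. The paper performs the standard near/far split for a discrete convolution at $k=j$, swaps the double sum, and then bounds $\sum_{j\geq \jjp\vee k}2^{-(j-k)\gamma q}$ by a constant. You split instead at $k=\jjp$, and for the far range $k<\jjp$ you pass to the unique dyadic ancestor $P_k\supset P$ with $\ell(P_k)=2^{-k}$ and invoke $\varphi\in\Gp$. Your split is conceptually the sharper one: $k=\jjp$ is exactly the threshold at which the $\lql$- and $\lpl$-norms restricted to $P$ stop ``seeing'' the level $k$, and your argument makes explicit where the growth of $\varphi$ enters, which the paper's write-up does not.

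There is, however, a gap in your far-range step. With the $\Gp$ bound $\varphi(2^{-k})\leq 2^{(\jjp-k)d/p}\varphi(\ell(P))$, the residual $k$-sum becomes $\sum_{l\geq 1}2^{l(d/p-\gamma)}$, which converges only when $\gamma>d/p$; the assertion ``the dependence on $P$ cancels and the bound is uniform'' is therefore unjustified for general $\gamma>0$. This is not a defect peculiar to your route: the proposition as stated is false for small $\gamma$. For $d=1$, $p=1$, $q=\infty$, $\varphi(t)=t\in\mathcal{G}_1$, put $g_0:=2^N\chi_{[0,2^{-N})}$ and $g_k:=0$ for $k\geq 1$; then $\|\{g_k\}\mid\lql\|=1$, while $\{G_j\}=\{2^{-j\gamma}g_0\}$ tested on $P=[0,2^{-N})$ gives $\|\{G_j\}\mid\lql\|\geq 2^{N(1-\gamma)}$, which is unbounded in $N$ whenever $\gamma<1=d/p$. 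The paper's own proof elides the same point: in $\mathrm{I}_{P,1}$, after swapping sums, the index $k$ runs from $0$ rather than from $\jjp$, so the concluding step $\mathrm{I}_{P,1}\lesssim\|\{g_k\}\mid\lpl\|$ does not follow without a lower bound on $\gamma$. In every use the paper makes of the proposition (Proposition~\ref{up1}) one has $\gamma=|s|+2a+N+d/2$ with $a>d/\min(p,q)$, hence $\gamma>d/p$, so the downstream results are unaffected. For part~(ii) the refinement from \eqref{rintc}, namely $\varphi(2^{-k})\lesssim 2^{(\jjp-k)(d/p-\varepsilon)}\varphi(\ell(P))$, relaxes the threshold to $\gamma>d/p-\varepsilon$, as you indicate, but still does not cover all $\gamma>0$. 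Your proof is correct once the tacit hypothesis $\gamma>d/p$ (resp.\ $\gamma>d/p-\varepsilon$) is made explicit, and it has the virtue of exposing precisely where this restriction is forced.
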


\begin{proof}
    By similarity, we only prove (ii). Let $P\in\mq$ and
    \begin{align*}
        {\rm{I}}_P:=\frac{1}{\varphi(\ell(P))}\left\{\int_P\left[\sum_{j=\jjp}^\infty
        \left(\sum_{k=0}^\infty2^{-|k-j|\gamma}|g_k(x)|\right)^q
        \right]^\frac{p}{q}\,\dint x\right\}^\frac{1}{p}.
    \end{align*}
Now we need to prove that
\begin{align*}
    {\rm{I}}_P&\lesssim\|\{g_k\}_{k\in\no}\mid\lpl\|,
\end{align*}
where the implicit constant in $\lesssim$ is independent of $P$ and $\{g_k\}_{k\in\no}$. By the triangle inequality,
\begin{align*}
    {\rm{I}}_P&\leq\frac{1}{\varphi(\ell(P))}\left\{\int_P\left[\sum_{j=\jjp}^\infty\left(\sum_{k=0}^j 2^{-(j-k)\gamma}|g_k(x)|\right)^q\right]^\frac{p}{q}\,\dint x\right\}^\frac{1}{p}\\
    &\quad+\frac{1}{\varphi(\ell(P))}\left\{\int_P\left[\sum_{j=\jjp}^\infty\left(\sum_{k=j+1}^\infty2^{-(k-j)\gamma}|g_k(x)|\right)^q\right]^\frac{p}{q}\,\dint x\right\}^\frac{1}{p}\\
    &=:{\rm{I}}_{P,1}+{\rm{I}}_{P,2}.
\end{align*}

\emph{Step 1.} If $0<q\leq1$, applying the monotonicity of the $\ell^q$-norm in $q$, we have
\begin{align*}
   {\rm{I}}_{P,1}&\leq\frac{1}{\varphi(\ell(P))}\left\{\int_P\left(\sum_{j=\jjp}^\infty\sum_{k=0}^j2^{-(j-k)\gamma q}|g_k(x)|^q\right)^\frac{p}{q}\,\dint x\right\}^\frac{1}{p}\\
    &=\frac{1}{\varphi(\ell(P))}\left\{\int_P\left(\sum_{k=0}^\infty\sum_{j=j_P\vee k}^\infty 2^{-(j-k)\gamma q}|g_k(x)|^q\right)^\frac{p}{q}\,\dint x\right\}^\frac{1}{p}.
\end{align*}
Note that
\begin{align*}
    \sum_{j=j_P\vee k}^\infty 2^{-(j-k)\gamma q}
    \leq \sum_{j=k}^\infty 2^{-(j-k)\gamma q}
    =\sum_{\ell=0}^\infty 2^{-\ell \gamma q}<\infty,
\end{align*}
then
\begin{align*}
    {\rm{I}}_{P,1}\lesssim \frac{1}{\varphi(\ell(P))}\left\{\int_P\left(\sum_{k=0}^\infty|g_k(x)|^q\right)^\frac{p}{q}\,\dint x\right\}^\frac{1}{p}\lesssim\|\{g_k\}_{k\in\no}\mid\lpl\|.
\end{align*}
For ${\rm{I}}_{P,2}$, we have
\begin{align*}
    {\rm{I}}_{P,2}&\leq\frac{1}{\varphi(\ell(P))}\left\{\int_P\left(\sum_{k=\jjp}^\infty\sum_{j=j_P\vee 0}^k 2^{-(k-j)\gamma q}|g_k(x)|^q\right)^\frac{p}{q}\,\dint x\right\}^\frac{1}{p}\\
    &\lesssim\frac{1}{\varphi(\ell(P))}\left\{\int_P\left(\sum_{k=\jjp}^\infty|g_k(x)|^q\right)^\frac{p}{q}\,\dint x\right\}^\frac{1}{p}\\
    &\lesssim\|\{g_k\}_{k\in\no}\mid\lpl\|.
\end{align*}

\emph{Step 2.} If $1<q\leq\infty$, choosing $\varepsilon\in(0,\gamma)$ and applying H\"older's inequality, we have
\begin{align*}
    {\rm{I}}_{P,1}&\leq\frac{1}{\varphi(\ell(P))}\left\{\int_P\left[
    \sum_{j=\jjp}^\infty\left(\sum_{k=0}^j 2^{-(j-k)(\gamma-\varepsilon)q}|g_k(x)|^q\right)
    \left(\sum_{k=0}^j 2^{-(j-k)\varepsilon q'}|g_k(x)|^{q'}\right)
    ^\frac{q}{q'}\right]^\frac{p}{q}\,\dint x\right\}^\frac{1}{p}\\
    &\lesssim \frac{1}{\varphi(\ell(P))}\left\{\int_P\left(\sum_{j=\jjp}^\infty
    \sum_{k=0}^{j}2^{-(j-k)(\gamma-\varepsilon)q}|g_k(x)|^q\right)^\frac{p}{q}\,
    \dint x\right\}^\frac{1}{p}
\end{align*}
and
\begin{align*}
    {\rm{I}}_{P,2}\lesssim \frac{1}{\varphi(\ell(P))}\left\{\int_P\left(\sum_{j=\jjp}^\infty
    \sum_{k=j+1}^{\infty}2^{-(j-k)(\gamma-\varepsilon)q}|g_k(x)|^q\right)^\frac{p}{q}\,\dint x\right\}^\frac{1}{p},
\end{align*}
where $q'$ denotes the conjugate index of $q$, namely, $\frac{1}{q'}+\frac{1}{q}=1$.

By similar arguments as in Step 1, we also have
\begin{align*}
    {\rm{I}}_P \leq{\rm{I}}_{P,1}+{\rm{I}}_{P,2}\lesssim\|\{g_k\}_{k\in\no}\mid\lpl\|.
\end{align*}
Hence we finish the proof of Proposition~\ref{ggl}.
\end{proof}

\begin{remark}
The case $\varphi:=1$ has been obtained in \cite[Lemma 2]{ryc99}.
Furthermore, the case $\varphi(\ell(P)):=|P|^\tau$ with $0\leq\tau<\infty$ is covered by \cite[Lemma 2.3]{yy10b}.
\end{remark}

\section{Generalized Besov-type and Triebel-Lizorkin-type spaces}\label{sec-gen-type-sp}

We now generalize Besov-type spaces and Triebel-Lizorkin-type spaces via a function $\varphi\in\Gp$.

\begin{definition}\label{d3}
Let $s\in\real$, $0<p<\infty$, $0<q\leq\infty$, $\varphi\in\Gp$ and $\theta=\{\theta_j\}_{j\in\no}$ be the above dyadic resolution of unity.
  \begin{enumerate}[\bfseries\upshape  (i)]
\item The generalized Besov-type space $\btt(\rd)$ is defined to be the set of all $f\in\sdd$ such that
    \begin{align}\label{bsn}
    \|f\mid B_{p,q}^{s,\varphi}(\rd)\|:=\|\{2^{js}\mathcal{F}^{-1}(\theta_j\mathcal{F}f)\}_{j\in\no}\mid\lql\|
    \end{align}
  is finite (with the usual modification for $q=\infty$).
\item Assume in addition that $\varphi$ satisfies \eqref{rintc} when $0<q<\infty$. The generalized Triebel-Lizorkin-type space $\ftt(\rd)$ is defined to be the set of all $f\in\sdd$ such that
    \begin{align}\label{fsn}
    \|f\mid F_{p,q}^{s,\varphi}(\rd)\|:=\|\{2^{js}\mathcal{F}^{-1}(\theta_j\mathcal{F}f)\}_{j\in\no}\mid\lpl\|
    \end{align}
   is finite (with the usual modification for $q=\infty$).
    \item The space $A_{p,q}^{s,\varphi}(\rd)$ denotes either $\btt(\rd)$ or $\ftt(\rd)$. Assume in addition that $\varphi$ satisfies \eqref{rintc} when $0<q<\infty$ and $A_{p,q}^{s,\varphi}(\rd)$ denotes $F_{p,q}^{s,\varphi}(\rd)$.
    \end{enumerate}
\end{definition}

\begin{example}
   \begin{enumerate}[\bfseries\upshape  (i)]
        \item If $\varphi:= 1$, then $B_{p,q}^{s,1}(\rd)$ and $F_{p,q}^{s,1}(\rd)$ become the Besov spaces $B_{p,q}^s(\rd)$ and the Triebel-Lizorkin spaces $F_{p,q}^s(\rd)$ respectively.
        \item If $\varphi(t):= t^{d\tau}$, where $\tau\geq 0$, then $B_{p,q}^{s,\varphi}(\rd)$ and $F_{p,q}^{s,\varphi}(\rd)$ become the Besov-type spaces $B_{p,q}^{s,\tau}(\rd)$ and the Triebel-Lizorkin-type spaces $F_{p,q}^{s,\tau}(\rd)$ respectively.
    \end{enumerate}
\end{example}

\subsection{Basic Properties}

\begin{proposition}\label{t3}
The definition of the spaces $A_{p,q}^{s,\varphi}(\rd)$ is independent of the choice of the dyadic resolution of unity $\theta$.
\end{proposition}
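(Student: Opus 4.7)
\emph{Proof proposal for Proposition~\ref{t3}.} The plan is to show that two admissible dyadic resolutions of unity $\theta=\{\theta_j\}_{j\in\no}$ and $\widetilde\theta=\{\widetilde\theta_j\}_{j\in\no}$ (both satisfying \eqref{3e1}--\eqref{3e2}) produce equivalent quasi-norms on $\Att(\rd)$. By symmetry it suffices to dominate the $\widetilde\theta$-norm by the $\theta$-norm; the arguments in the $B$- and $F$-cases are identical once we feed the correct underlying sequence space $\lql$ or $\lpl$ into the multiplier theorem (Theorem~\ref{thme}), with the assumption \eqref{rintc} needed in the $F$-case being already part of Definition~\ref{d3}.

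I would first carry out the standard support analysis. From \eqref{3e1}--\eqref{3e2} one has $\supp\widetilde\theta_j\subset\{x\in\rd:2^{j-1}\le|x|\le 3\cdot 2^{j-1}\}$ for $j\ge 1$ and $\supp\widetilde\theta_0\subset\{|x|\le 3/2\}$, and likewise for $\theta_k$, so $\supp\widetilde\theta_j\cap\supp\theta_k=\emptyset$ whenever $|j-k|\ge 2$. Using $\sum_k\theta_k\equiv 1$ this yields, with the convention $\theta_{-1}:=0$,
\[
\mathcal{F}^{-1}(\widetilde\theta_j\mathcal{F}f)\;=\;\sum_{k=-1}^{1}\mathcal{F}^{-1}\bigl(\widetilde\theta_j\,\mathcal{F}h_{j+k}\bigr),\qquad h_l:=\mathcal{F}^{-1}(\theta_l\mathcal{F}f).
\]

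The heart of the proof is to invoke Theorem~\ref{thme} on each of the three summands. For fixed $k\in\{-1,0,1\}$ I take $\mu_j:=\widetilde\theta_j$ with $\Omega_j:=\supp\theta_{j+k}$, so that $D_j\sim 2^j$. Using $\widetilde\theta_j(y)=\widetilde\theta_0(2^{-j}y)-\widetilde\theta_0(2^{-j+1}y)$ for $j\ge 1$, a direct calculation shows that $\widetilde\theta_j(D_j\,\cdot)$ equals one of at most two fixed Schwartz functions independent of $j$ (namely $\widetilde\theta_0(c\,\cdot)$ for $j=0$, or $\widetilde\theta_0(c 2^k\,\cdot)-\widetilde\theta_0(c 2^{k+1}\,\cdot)$ for $j\ge 1$, where $c$ is the diameter constant). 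Hence $\sup_j\|\widetilde\theta_j(D_j\,\cdot)\mid H_2^\varkappa(\rd)\|<\infty$ for any chosen $\varkappa>\tfrac{d}{2}+\tfrac{d}{\min(p,q)}$, and Theorem~\ref{thme} delivers
\[
\bigl\|\bigl\{\mathcal{F}^{-1}(\widetilde\theta_j\,\mathcal{F}(2^{js}h_{j+k}))\bigr\}_j\mid X\bigr\|\;\lesssim\;\bigl\|\{2^{js}h_{j+k}\}_j\mid X\bigr\|
\]
for $X\in\{\lql,\lpl\}$, according to the scale considered.

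It remains to absorb the index shift. Writing $2^{js}h_{j+k}=2^{-ks}\,2^{(j+k)s}h_{j+k}$ and substituting $l=j+k$, the summation range in the sequence-space norm changes from $j\ge j_P\vee 0$ to $l\ge(j_P\vee 0)+k$. For $k\in\{0,1\}$ this is contained in the original range and costs nothing; for $k=-1$ at most one extra index $l=j_P-1$ appears (only when $j_P\ge 1$), and I would handle it by passing to the dyadic parent cube $\widetilde P\supset P$ with $\ell(\widetilde P)=2\ell(P)$ and using $\varphi(2t)\le 2^{d/p}\varphi(t)$, a doubling estimate that is immediate from \eqref{gpi}. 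Summing the three contributions $k\in\{-1,0,1\}$ via the (quasi-)triangle inequality gives $\|f\mid\Att(\rd)\|_{\widetilde\theta}\lesssim\|f\mid\Att(\rd)\|_\theta$, and symmetry completes the argument. The only step requiring genuine care is the uniform control of $\|\widetilde\theta_j(D_j\,\cdot)\mid H_2^\varkappa(\rd)\|$; once Theorem~\ref{thme} is available, everything else reduces to routine bookkeeping in the quasi-Banach sequence spaces of Section~\ref{sec-fnt-s}.
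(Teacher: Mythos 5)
Your proposal is correct and takes essentially the same route as the paper: decompose $\widetilde\theta_j$ using the $2$-overlap of supports, apply the multiplier theorem (Theorem~\ref{thme}) with a uniformly $H_2^\varkappa$-bounded family of multipliers, and conclude by symmetry. Your write-up is in fact slightly more careful than the paper's on the index shift $l=j+k$ (passing to the parent cube for $k=-1$ and using $\varphi(2t)\le 2^{d/p}\varphi(t)$), a point the paper's proof glosses over.
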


\begin{proof}
We prove the case of $\btt(\rd)$ spaces. For the case of $\ftt(\rd)$ spaces, it is the same if one uses (ii) of Theorem~\ref{thme} instead of (i) of Theorem~\ref{thme}.
Let $\theta=\{\theta_j\}_{j\in\no}$ and $\phi=\{\phi_j\}_{j\in\no}$ be arbitrary dyadic resolutions of unity.
If $\theta_{-1}:=0$, then $\theta_j=\theta_j\sum_{r=-1}^1\phi_{j+r}$ for $j\in\no$. Therefore,
\begin{align*}
    \mathcal{F}^{-1}\theta_j\mathcal{F}f=\sum_{j=-1}^1\mathcal{F}^{-1}\theta_j\mathcal{F}\mathcal{F}^{-1}\phi_{j+r}\mathcal{F}f.
\end{align*}
Choose $0<r<\min(p,q)$ and $\varkappa>\frac{d}{2}+\frac{d}{r}$.
If we replace $f_j$ and $\mu_j$ in (i) of Theorem~\ref{thme} by $\mathcal{F}^{-1}\phi_{j+r}\mathcal{F}f$ and $\theta_j$ respectively, we have
\begin{align*}
    &\|\{\mathcal{F}^{-1}\theta_j\mathcal{F}\mathcal{F}^{-1}\phi_{j+r}\mathcal{F}f\}_{j\in\no}\mid\lql\|\\
    &\qquad\lesssim\sup_j\|\theta_j(2^j\cdot)\mid H_2^\varkappa(\rd)\|
    \|\{\mathcal{F}^{-1}\phi_{j+r}\mathcal{F}f\}_{j\in\no}\mid\lql\|.
\end{align*}
By \eqref{3e1}, \eqref{3e2} and Definition~\ref{d3}, for $r=-1,0,1$, $j\in\no$,
\begin{align*}
  \|\{\mathcal{F}^{-1}\theta_j\phi_{j+r}\mathcal{F}f\}_{j\in\no}\mid\lql\|
  \lesssim\|\{\mathcal{F}^{-1}\phi_{j+r}\mathcal{F}f\}_{j\in\no}\mid\lql\|.
\end{align*}
Thus,
\begin{align*}
  \|\{\mathcal{F}^{-1}\theta_j\mathcal{F}f\}_{j\in\no}\mid\lql\|
  \lesssim\|\{\mathcal{F}^{-1}\phi_{j}\mathcal{F}f\}_{j\in\no}\mid\lql\|.
\end{align*}
Hence we finish the proof of Theorem~\ref{t3}.
\end{proof}

Next we establish the maximal inequalities. Recall that if $L\in\nat$, then $\mathcal{U}_L(\rd)$ denotes the collection of all functions $\theta=\{\theta_j\}_{j\in\no}\subset\sd$ with compact supports such that
\begin{align*}
    L(\theta):=\sup_{x\in\rd}|x|^L\sum_{|\alpha|\leq L}|\Dd^\alpha\theta_0(x)|
    +\sup_{x\in\rd\setminus\{0\},j\in\nat}(|x|^L+|x|^{-L})\sum_{|\alpha|\leq L}|\Dd^\alpha\theta_j(2^jx)|<\infty,
\end{align*}
where, here and hereafter, $\Dd^\alpha=\frac{\partial^{|\alpha|}}{\partial x_1^{\alpha_1}\cdots\partial x_d^{\alpha_d}}$
with $\alpha=(\alpha_1,\cdots,\alpha_d)$, $\alpha_i\in\nat\cup\{0\}$ and $|\alpha|=\sum_{j=1}^d\alpha_j$.

\begin{definition}(\cite[p.53]{t83})
Let $L\in\nat$, $\theta=\{\theta_j\}_{j\in\no}\in\mathcal{U}_L(\rd)$, $f\in\sdd$ and $a>0$,
then we define Peetre's maximal function
\begin{align}\label{nmfx}
    (\theta_j^\ast f)(x):=\sup_{y\in\rd}\frac{|(\mathcal{F}^{-1}\theta_j\mathcal{F}f)(x-y)|}{1+|2^jy|^a},\qquad x\in\rd,\, j\in\no.
\end{align}
\end{definition}

\begin{proposition}\label{up1}
Let $s\in\real$, $0<p<\infty$, $0<q\leq\infty$, $\varphi\in\Gp$ and $\theta=\{\theta_j\}_{j\in\no}$ be the above dyadic resolution of unity.
Let $a>0$ in \eqref{nmfx} be fixed, and let $L$ be a natural number with $L>|s|+3a+d+2$.
Assume that $\theta^\delta=\{\theta_j^\delta\}_{j\in\no}\in\mathcal{U}_L(\rd)$ with $0<\delta<1$.
  \begin{enumerate}[\bfseries\upshape  (i)]
    \item Then for all $f\in\sdd$,
    \begin{align*}
        \left\|\left\{2^{js}\sup_{0<\delta<1}(\theta_j^{\delta\ast}f)(\cdot)\right\}_{j\in\no}\mid\lql\right\|
        \lesssim\sup_{0<\delta<1}L(\theta^\delta)\|\{2^{js}(\theta_j^{\ast}f)(\cdot)\}_{j\in\no}\mid\lql\|.
    \end{align*}
    \item Assume in addition that $\varphi$ satisfies \eqref{rintc} when $0<q<\infty$. Then for all $f\in\sdd$,
    \begin{align*}
        \left\|\left\{2^{js}\sup_{0<\delta<1}(\theta_j^{\delta\ast}f)(\cdot)\right\}_{j\in\no}\mid\lpl\right\|
        \lesssim\sup_{0<\delta<1}L(\theta^\delta)\|\{2^{js}(\theta_j^{\ast}f)(\cdot)\}_{j\in\no}\mid\lpl\|.
    \end{align*}
\end{enumerate}
\end{proposition}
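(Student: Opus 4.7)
The strategy is to reduce both parts to a single pointwise estimate of the form
\begin{equation*}
\sup_{0<\delta<1}(\theta_j^{\delta\ast}f)(x) \;\lesssim\; \sup_{0<\delta<1} L(\theta^\delta) \sum_{k=0}^\infty 2^{-|k-j|\gamma}\,2^{(k-j)s}(\theta_k^\ast f)(x), \qquad j\in\no,\; x\in\rd,
\end{equation*}
for some $\gamma>0$, and then to apply Proposition~\ref{ggl} to the sequence $g_k(x):=2^{ks}(\theta_k^\ast f)(x)$. After multiplying by $2^{js}$, the left-hand side is exactly the quantity whose $\lql$- (resp.\ $\lpl$-) norm must be controlled, while the right-hand side becomes $\sum_k 2^{-|k-j|\gamma}g_k(x)$; parts (i) and (ii) of Proposition~\ref{ggl} then deliver (i) and (ii) of the present proposition, with (ii) relying on the $\varepsilon$-assumption \eqref{rintc}.

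For the pointwise estimate I would follow the Triebel--Rychkov technique from \cite[Sec.~1.6]{t83} and \cite{ryc99}. Fix an auxiliary dyadic resolution of unity $\{\phi_k\}_{k\in\no}$ as in \eqref{3e1}--\eqref{3e2}, pick cut-offs $\Psi_k$ equal to one on $\supp\phi_k$ and supported slightly larger, and use $f=\sum_k\mathcal{F}^{-1}\phi_k\mathcal{F}f$ to decompose
\begin{equation*}
\mathcal{F}^{-1}\theta_j^\delta\mathcal{F}f(x) \;=\; \sum_{k=0}^\infty \bigl[\mathcal{F}^{-1}(\theta_j^\delta\Psi_k)\bigr]\ast\bigl(\mathcal{F}^{-1}\phi_k\mathcal{F}f\bigr)(x).
\end{equation*}
The membership $\theta^\delta\in\mathcal{U}_L(\rd)$ forces $\theta_j^\delta(2^j\cdot)$ to behave like $|\xi|^L$ near the origin and to decay like $|\xi|^{-L}$ at infinity; a routine integration-by-parts estimate then yields spatial decay of $\mathcal{F}^{-1}(\theta_j^\delta\Psi_k)$ with a gain of $2^{-|k-j|(L-d-2)}$, uniformly in $\delta$ up to the factor $L(\theta^\delta)$.

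Inserting this into the convolution identity above, dividing by $1+|2^j z|^a$ as Peetre's maximal function demands, and passing from the weight $(1+|2^jy|^a)^{-1}$ to $(1+|2^ky|^a)^{-1}$ at the price of a factor $2^{|k-j|a}$, one arrives at
\begin{equation*}
(\theta_j^{\delta\ast}f)(x) \;\lesssim\; L(\theta^\delta) \sum_{k=0}^\infty 2^{-|k-j|(L-a-d-2)}(\theta_k^\ast f)(x).
\end{equation*}
To absorb the extra factor $2^{(k-j)s}$ coming from the $2^{js}$-weighting into a genuine geometric decay $2^{-|k-j|\gamma}$, one needs $L-a-d-2>|s|+a$, i.e.\ $L>|s|+2a+d+2$, which the hypothesis $L>|s|+3a+d+2$ comfortably supplies (with any $\gamma\in(0,a)$). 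The main obstacle in this programme is precisely this bookkeeping in comparing Peetre weights at different scales $2^j$ and $2^k$: the asymmetry when $k\ne j$ inflates the constant by $2^{|k-j|a}$, which explains the appearance of $3a$ (rather than $a$) in the hypothesis on $L$ and, in the $F$-case, is the reason why the $\varepsilon$-assumption \eqref{rintc} is unavoidable when one finally invokes Proposition~\ref{ggl}.
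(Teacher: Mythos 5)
Your strategy is exactly the paper's: establish the pointwise estimate
\begin{align*}
2^{js}(\theta_j^{\delta\ast}f)(x)\;\lesssim\;L(\theta^\delta)\sum_{k=0}^\infty 2^{-\gamma|k-j|}\,2^{ks}(\theta_k^\ast f)(x)
\end{align*}
with some $\gamma>0$, uniformly in $\delta\in(0,1)$, and then feed the sequence $g_k:=2^{ks}(\theta_k^\ast f)$ into Proposition~\ref{ggl}. The only difference is that the paper simply cites the first step of the proof of \cite[Proposition~2.3.6]{t83} for this inequality, recording the exponent $|s|+2a+N+\tfrac{d}{2}-L$ with $a+\tfrac{d}{2}<N\leq a+\tfrac{d}{2}+2$, whereas you re-derive the Triebel--Rychkov convolution estimate from scratch.

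Two small cautions about your derivation, though they do not invalidate the proof. First, your bookkeeping is too generous: you arrive at the requirement $L>|s|+2a+d+2$, but Triebel's argument (and hence the paper's hypothesis) needs $L>|s|+3a+d+2$; the extra $a$ is genuinely present in the kernel estimate after the mollification step, and your sketch's factor $2^{-|k-j|(L-d-2)}$ overstates the spatial decay one actually extracts from $\theta^\delta\in\mathcal{U}_L(\rd)$. The hypothesis is strong enough to absorb either count, so the argument closes, but your concluding sentence — that the Peetre-weight asymmetry ``explains the appearance of $3a$'' — is not substantiated by your own computation, which only produces $2a$. Second, the $\varepsilon$-assumption \eqref{rintc} is not triggered by the weight-comparison bookkeeping: it is imposed a priori in Definition~\ref{lpql} so that the space $\lpl$ is meaningfully defined, and it is carried as a blanket hypothesis in Proposition~\ref{ggl}(ii); in that proof it plays no role beyond ensuring the ambient space is the right one.
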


\begin{proof}
From the first step of the proof of Proposition 2.3.6 in \cite{t83}, we have
\begin{align*}
    2^{js}(\theta_j^{\delta\ast}f)(x)
    \lesssim L(\theta^\delta)\sum_{k=0}^\infty2^{(|s|+2a+N+\frac{d}{2}-L)|k-j|}2^{ks}(\theta_k^\ast f)(x),
\end{align*}
where $N$ is an even number with $a+\frac{d}{2}<N\leq a+\frac{d}{2}+2$ and the implicit constant in $\lesssim$ is independent of $f\in\sdd$, $x\in\rd$, $j$, $\delta$, $\theta$ and $\theta^\delta$.

Let $\gamma:=|s|+2a+N+\frac{d}{2}$ and let $L>\gamma$, then applying Proposition~\ref{ggl}, we obtain the desired results.
\end{proof}

\begin{remark}
From the above proof, we conclude that $L>|s|+2a+d/2+a+d/2+2=|s|+3a+d+2$.
\end{remark}

\begin{theorem}\label{prt1}
Let $s\in\real$, $0<p<\infty$, $0<q\leq\infty$, $\varphi\in\Gp$ and $\theta=\{\theta_j\}_{j\in\no}$ be the above dyadic resolution of unity.
Let $a>\frac{d}{\min\{p,q\}}$ and $L>|s|+3a+d+2$.
Assume that $\theta^\delta=\{\theta_j^\delta\}_{j\in\no}\in\mathcal{U}_L(\rd)$ with $0<\delta<1$,
  \begin{enumerate}[\bfseries\upshape  (i)]
    \item  Then for all $f\in\sdd$,
    \begin{align*}
         \left\|\left\{2^{js}\sup_{0<\delta<1}(\theta_j^{\delta\ast}f)(\cdot)\right\}_{j\in\no}\mid\lql\right\|
         \lesssim\sup_{0<\delta<1}L(\theta^\delta)\|f\mid\btt(\rd)\|.
    \end{align*}
     \item Assume in addition that $\varphi$ satisfies \eqref{rintc} when $0<q<\infty$. Then for all $f\in\sdd$,
    \begin{align*}
         \left\|\left\{2^{js}\sup_{0<\delta<1}(\theta_j^{\delta\ast}f)(\cdot)\right\}_{j\in\no}\mid\lpl\right\|
         \lesssim\sup_{0<\delta<1}L(\theta^\delta)\|f\mid\ftt(\rd)\|.
    \end{align*}
\end{enumerate}
\end{theorem}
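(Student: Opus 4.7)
The plan is to chain Proposition~\ref{up1} with Theorem~\ref{assth1}. Since the hypothesis $L > |s|+3a+d+2$ makes Proposition~\ref{up1} directly applicable, it already gives
\[
\left\|\left\{2^{js}\sup_{0<\delta<1}(\theta_j^{\delta\ast}f)\right\}_{j\in\no}\mid\lql\right\| \lesssim \sup_{0<\delta<1}L(\theta^\delta)\,\bigl\|\{2^{js}(\theta_j^\ast f)\}_{j\in\no}\mid\lql\bigr\|,
\]
and the analogous bound in $\lpl$ for part (ii). What remains is to estimate the right-hand side by $\|f\mid\btt(\rd)\|$, respectively $\|f\mid\ftt(\rd)\|$; this is where Theorem~\ref{assth1} enters.

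To do so, I would set $f_j := 2^{js}\mathcal{F}^{-1}(\theta_j\mathcal{F}f)$, so that the target norm is by definition $\|f\mid\btt(\rd)\|$. From \eqref{3e1}--\eqref{3e2} one has $\supp\theta_j$ contained in a ball of radius $\lesssim 2^j$ for every $j\in\no$, so there exist compact sets $\Omega_j \supset \supp\mathcal{F}f_j$ with diameter $D_j \sim 2^j$. The assumption $a > d/\min(p,q)$ permits the choice of an exponent $r$ with $d/a \leq r < \min(p,q)$, and the resulting inequality $d/r \leq a$ yields the elementary comparison $1+|2^j y|^{d/r} \leq 2(1+|2^j y|^a)$. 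Combined with $D_j \sim 2^j$, this gives the pointwise bound
\[
2^{js}(\theta_j^{\ast} f)(x) = \sup_{y\in\rd}\frac{|f_j(x-y)|}{1+|2^j y|^a} \lesssim \sup_{y\in\rd}\frac{|f_j(x-y)|}{1+|D_j y|^{d/r}}.
\]

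Taking the $\lql$-norm of this inequality and invoking Theorem~\ref{assth1}(i), one immediately obtains $\|\{2^{js}(\theta_j^\ast f)\}_{j\in\no}\mid\lql\| \lesssim \|\{f_j\}_{j\in\no}\mid\lql\| = \|f\mid\btt(\rd)\|$, which together with the application of Proposition~\ref{up1} above proves (i). Part (ii) proceeds identically, using Proposition~\ref{up1}(ii) and Theorem~\ref{assth1}(ii); both these ingredients require the $\varepsilon$-assumption \eqref{rintc} in the range $0<q<\infty$, which is precisely what is hypothesized in (ii). The only delicate point is the exponent bookkeeping: Peetre's maximal function carries the exponent $a$ while Theorem~\ref{assth1} controls weights of the form $|D_j y|^{d/r}$ with $r<\min(p,q)$, and the condition $a > d/\min(p,q)$ is tailored exactly to allow a choice of $r$ with $d/r \leq a$ so that the Peetre weight is dominated by the one available from the maximal inequality. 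Everything else is a direct concatenation of inequalities already established earlier in the paper.
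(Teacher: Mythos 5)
Your proposal is correct and follows essentially the same route as the paper's own proof: reduce via Proposition~\ref{up1} to controlling the Peetre maximal functions $\theta_j^\ast f$, then apply Theorem~\ref{assth1} with $D_j\sim 2^j$ and a suitable $r<\min(p,q)$ (the paper simply fixes $d/r=a$, whereas you allow $d/a\le r<\min(p,q)$ and dominate the Peetre weight, which is an equivalent, marginally more flexible bookkeeping). The exponent and hypothesis tracking, including the role of the $\varepsilon$-assumption~\eqref{rintc} in part~(ii), matches what the paper does.
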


\begin{proof}
By similarity, we only prove (ii). By (ii) of Proposition~\ref{up1}, it is sufficient to show that
\begin{align}\label{guocheng1}
    \|\{2^{js}(\theta_j^\ast f)(\cdot)\}_{j\in\no}\mid\lpl\|
    \lesssim\|\{2^{js}\mathcal{F}^{-1}(\theta_j\mathcal{F}f)\}_{j\in\no}\mid\lpl\|
\end{align}
if $f\in\ftt(\rd)$ and $a>d/\min\{p,q\}$ in \eqref{nmfx}.
However, \eqref{guocheng1} follows from \eqref{maxs2} with $D_j=2^{j}$ (cf. \eqref{3e1} and \eqref{3e2}), $d/r=a$
and $f_j=\mathcal{F}^{-1}(\theta_j\mathcal{F}f)$. This finishes the proof.
\end{proof}

Now we consider the lifting properties. If $\varkappa\in\real$, then the lift operator $I_\varkappa$ is defined by
$$
I_\varkappa f:=\mathcal{F}^{-1}(1+|x|^2)^\frac{\varkappa}{2}\mathcal{F}f,\qquad f\in\sdd.
$$

\begin{theorem}\label{lift}
Let $0<p<\infty$, $0<q\leq\infty$, $s\in\real$ and $\varphi\in\Gp$.
Assume in addition that $\varphi$ satisfies \eqref{rintc} when $0<q<\infty$ and $A_{p,q}^{s,\varphi}(\rd)=\ftt(\rd)$. Then
$$
I_\varkappa: A_{p,q}^{s,\varphi}(\rd)\rightarrow A_{p,q}^{s-\varkappa,\varphi}(\rd)
$$
is an isomorphism.
\end{theorem}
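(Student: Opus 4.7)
The plan is to prove boundedness $I_\varkappa : A^{s,\varphi}_{p,q}(\rd) \to A^{s-\varkappa,\varphi}_{p,q}(\rd)$, and then conclude invertibility by composition with $I_{-\varkappa}$. Since $\mathcal{F}(I_\varkappa I_{-\varkappa}f)=\mathcal{F}f$ for every $f\in\sdd$, once continuity of the lift for all real exponents is established, $I_{-\varkappa}$ provides the two-sided continuous inverse. So the entire content is the norm estimate $\|I_\varkappa f \mid A^{s-\varkappa,\varphi}_{p,q}(\rd)\|\lesssim \|f\mid A^{s,\varphi}_{p,q}(\rd)\|$.

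First I would pick an auxiliary family $\{\tilde\theta_j\}_{j\in\no}\subset\sd$ with $\tilde\theta_j\equiv 1$ on $\supp\theta_j$ and $\supp\tilde\theta_j$ contained in a ball of diameter $D_j\sim 2^j$, obtained by slightly enlarging $\theta_j$. Then for each $j\in\no$ I would factorise
\begin{align*}
\theta_j(x)(1+|x|^2)^{\varkappa/2}\mathcal{F}f(x)=2^{j\varkappa}\lambda_j(x)\,\tilde\theta_j(x)\mathcal{F}f(x),\quad \lambda_j(x):=2^{-j\varkappa}(1+|x|^2)^{\varkappa/2}\theta_j(x),
\end{align*}
so that $2^{j(s-\varkappa)}\mathcal{F}^{-1}(\theta_j\mathcal{F}I_\varkappa f)=\mathcal{F}^{-1}\bigl(\lambda_j\mathcal{F}[2^{js}\mathcal{F}^{-1}(\tilde\theta_j\mathcal{F}f)]\bigr)$. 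The point of the $2^{-j\varkappa}$-normalisation is that, with $D_j=2^j$, the rescaling yields for $j\geq 1$
\begin{align*}
\lambda_j(2^j x)=(2^{-2j}+|x|^2)^{\varkappa/2}\bigl(\theta_0(x)-\theta_0(2x)\bigr),
\end{align*}
a smooth function supported in the fixed annulus $\{1/2\leq|x|\leq 3/2\}$, with all derivatives bounded uniformly in $j$; while $\lambda_0(x)=(1+|x|^2)^{\varkappa/2}\theta_0(x)$ is smooth and compactly supported. Hence $\sup_{j\in\no}\|\lambda_j(D_j\cdot)\mid H_2^K(\rd)\|<\infty$ for any $K>\tfrac{d}{2}+\tfrac{d}{\min(p,q)}$.

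Having the uniform multiplier bound, I would apply Theorem~\ref{thme}(i) in the Besov case and (ii) in the Triebel–Lizorkin case (for the latter using the standing hypothesis \eqref{rintc} on $\varphi$) to obtain
\begin{align*}
\bigl\|\{2^{j(s-\varkappa)}\mathcal{F}^{-1}(\theta_j\mathcal{F}I_\varkappa f)\}_{j\in\no}\mid X\bigr\|\lesssim \bigl\|\{2^{js}\mathcal{F}^{-1}(\tilde\theta_j\mathcal{F}f)\}_{j\in\no}\mid X\bigr\|,
\end{align*}
where $X$ stands for $\lql$ or $\lpl$ respectively. To control the right-hand side by $\|f\mid A^{s,\varphi}_{p,q}(\rd)\|$, I would repeat the argument used in the proof of Proposition~\ref{t3}: since $\tilde\theta_j=\tilde\theta_j\sum_{r=-1}^{1}\theta_{j+r}$ (with $\theta_{-1}:=0$), I split $\mathcal{F}^{-1}(\tilde\theta_j\mathcal{F}f)$ into three pieces and apply Theorem~\ref{thme} once more with multipliers $\tilde\theta_j$, which trivially satisfy a uniform $H_2^K$ bound after the rescaling by $D_j=2^j$.

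Combining these two applications of Theorem~\ref{thme} gives the norm inequality for $I_\varkappa$, and the same proof with $-\varkappa$ in place of $\varkappa$ gives it for $I_{-\varkappa}$. Together with $I_\varkappa I_{-\varkappa}=I_{-\varkappa}I_\varkappa=\mathrm{id}$ on $\sdd$, this yields the isomorphism. The only non-routine step is the verification that the rescaled multipliers $\lambda_j(2^j\cdot)$ are uniformly controlled in $H_2^K$; this is essentially the observation that the factor $2^{-j\varkappa}$ precisely compensates the homogeneity of $(1+|\cdot|^2)^{\varkappa/2}$ on the annulus where $\theta_j$ lives. Everything else reduces to the multiplier theorem that has already been established.
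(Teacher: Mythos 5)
Your proof is correct, but it follows a genuinely different route than the paper's. The paper proves boundedness by a single application of the Peetre maximal function estimate (Theorem~\ref{prt1}): one sets $\phi_j(x)=2^{-j\varkappa}(1+|x|^2)^{\varkappa/2}\theta_j(x)$, verifies $\phi=\{\phi_j\}_{j\in\no}\in\mathcal{U}_L(\rd)$ for every $L$ --- this is exactly the same observation you make, that the $2^{-j\varkappa}$ normalisation compensates the growth of $(1+|\cdot|^2)^{\varkappa/2}$ after rescaling by $2^j$ on the annulus where $\theta_j$ lives --- and then concludes immediately from Theorem~\ref{prt1}(i) together with the trivial pointwise bound $|\mathcal{F}^{-1}(\phi_j\mathcal{F}f)(x)|\leq\phi_j^\ast f(x)$. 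That finishes the estimate in one stroke, with no auxiliary cutoff $\tilde\theta_j$ and no second reduction step. Your multiplier-theorem route (Theorem~\ref{thme}) is equally valid, but it forces you to (a) enlarge the supports via $\tilde\theta_j$ so that $\lambda_j\tilde\theta_j=\lambda_j$, and (b) apply the multiplier theorem a second time, as in Proposition~\ref{t3}, to get back from $\tilde\theta_j$ to $\theta_j$. In short, the paper exploits the Peetre maximal machinery it has already built, which absorbs the change of bump into a single inequality, whereas you re-derive the same bound through the Fourier multiplier theorem at the price of an extra cutoff and one more application; neither approach is stronger, but the paper's is shorter. Your concluding paragraph on $I_{-\varkappa}$ providing the two-sided inverse is standard and agrees with the paper's (implicit) treatment.
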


\begin{proof}
We prove the case of $\btt(\rd)$ spaces. The proof of the case of $\ftt(\rd)$ spaces is similar.
Let $\theta=\{\theta_j\}_{j\in\no}$ be the dyadic resolution of unity,
then $\phi=\{\phi_j\}_{j\in\no}\in\mathcal{U}_L(\rd)$, where $L$ is an arbitrary natural number
and $\phi_j(x)=2^{-j\varkappa}(1+x^2)^\frac{\varkappa}{2}\theta_j(x)$.
If $f\in\btt(\rd)$, then by (i) of Theorem~\ref{prt1}
and the estimate $|(\mathcal{F}^{-1}\phi_j\mathcal{F}f)(x)|\leq\phi_j^\ast f(x)$, we have
\begin{align*}
    \|I_\varkappa f\mid B_{p,q}^{s-\varkappa,\varphi}(\rd)\|
    &=\|\{2^{(s-\varkappa)}\mathcal{F}^{-1}(1+|\cdot|^2)^\frac{\varkappa}{2}\theta_j\mathcal{F}f\}_{j\in\no}\mid\lql\|\\
    &=\|\{2^{j s}\mathcal{F}^{-1}\phi_j\mathcal{F}f\}_{j\in\no}\mid\lql\|\\
    &\lesssim\|f\mid\btt(\rd)\|,
\end{align*}
which finishes the proof of Theorem~\ref{lift}.
\end{proof}

\subsection{Embedding Properties}

We start with some rather elementary embedding results.

\begin{proposition}\label{fdmt}
Let $s\in\real$, $0<p<\infty$, $0<q\leq\infty$ and $\varphi\in\Gp$.
Assume in addition that $\varphi$ satisfies \eqref{rintc} when $0<q<\infty$ and $A_{p,q}^{s,\varphi}(\rd)=\ftt(\rd)$.
  \begin{enumerate}[\bfseries\upshape  (i)]
    \item  If $q_1\leq q_2$, then
    \begin{align*}
    A_{p,q_1}^{s,\varphi}(\rd)\eb A_{p,q_2}^{s,\varphi}(\rd).
    \end{align*}
    \item For $\varepsilon>0$ and arbitrary $0<q_1, q_2\leq\infty$,
    \begin{align*}
    A_{p,q_1}^{s+\varepsilon,\varphi}(\rd)\eb A_{p,q_2}^{s,\varphi}(\rd).
    \end{align*}
    \item
    \begin{align*}
    B_{p,\min(p,q)}^{s,\varphi}(\rd)\eb F_{p,q}^{s,\varphi}(\rd)\eb B_{p,\max(p,q)}^{s,\varphi}(\rd).
    \end{align*}
\end{enumerate}
\end{proposition}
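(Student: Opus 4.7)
The plan is to reduce each claim to a pointwise or fixed-$P$ inequality for sequences, after which the normalizing factor $\varphi(\ell(P))^{-1}$ and the supremum over dyadic $P$ pass through unchanged. The argument mirrors the classical proofs for $B^s_{p,q}$ and $F^s_{p,q}$ (cf.\ \cite{t83}), with the only point to verify being that the $P$-dependent starting index $\jjp$ causes no trouble.

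For (i), I would use the monotonicity of $\ell^q$ norms, $\|\cdot\mid\ell^{q_2}\|\le \|\cdot\mid\ell^{q_1}\|$ whenever $q_1\le q_2$. In the Besov case, apply it to the scalar sequence $\{a_j^P\}_{j\ge\jjp}$ with $a_j^P:=\bigl(\int_P|2^{js}\fuff(x)|^p\,\dint x\bigr)^{1/p}$; in the Triebel--Lizorkin case, apply it pointwise in $x$ to $\{|2^{js}\fuff(x)|\}_{j\ge\jjp}$ before taking the $L^p$-norm over $P$. Dividing by $\varphi(\ell(P))$ and taking the supremum over $P\in\mq$ yields the embedding.

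For (ii), fix $P\in\mq$ and set $b_j^P:=2^{j\varepsilon}a_j^P$, so that $a_j^P=2^{-j\varepsilon}b_j^P$. It suffices to show, uniformly in $P$, the sequence bound
\begin{equation*}
\|\{a_j^P\}_{j\ge\jjp}\mid\ell^{q_2}\|\lesssim \|\{b_j^P\}_{j\ge\jjp}\mid\ell^{q_1}\|.
\end{equation*}
If $q_1\le q_2$, this is immediate from (i) together with $2^{-j\varepsilon}\le1$. If $q_1>q_2$, I would apply H\"older's inequality with exponents $q_1/q_2$ and $q_1/(q_1-q_2)$, using $\sum_{j\ge 0}2^{-j\varepsilon q_1q_2/(q_1-q_2)}<\infty$. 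Both estimates are done pointwise in $x$ for the $F$-version before integration, so the same argument covers both scales.

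For (iii), I would split according to whether $q\le p$ or $q>p$ and combine Minkowski's integral inequality with the elementary embeddings $\ell^a\hookrightarrow\ell^b$ ($a\le b$). For $F_{p,q}^{s,\varphi}\eb B_{p,\max(p,q)}^{s,\varphi}$: if $q\ge p$, apply Minkowski to interchange $L^p(P)$ and $\ell^q$; if $q<p$, use Fubini $\|\cdot\mid\ell^p(L^p(P))\|=\|\cdot\mid L^p(P,\ell^p)\|$ followed by $\ell^q\hookrightarrow\ell^p$ pointwise. For $B_{p,\min(p,q)}^{s,\varphi}\eb F_{p,q}^{s,\varphi}$ the roles reverse: if $q\ge p$, use Fubini and $\ell^p\hookrightarrow\ell^q$ pointwise; if $q<p$, apply Minkowski in the other direction. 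In every case the factor $1/\varphi(\ell(P))$ is the same on both sides and the summation index $\jjp$ is identical, so the bound survives the supremum over $P$.

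The main subtlety---not a true obstacle, but the one point that needs checking---is that the inner summation starts at $\jjp$ rather than at $0$, so care must be taken that all manipulations (Minkowski, H\"older, Fubini, $\ell^a\hookrightarrow\ell^b$) are applied to the truncated sequences $\{a_j^P\}_{j\ge\jjp}$; since each of those manipulations is monotone and respects the index set, the truncation causes no issue and the constants remain independent of $P$.
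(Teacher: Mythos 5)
Your proposal is correct and takes essentially the same route as the paper: (i) and (ii) follow from $\ell^{q_1}\hookrightarrow\ell^{q_2}$ monotonicity and H\"older applied to the fixed-$P$ scalar sequences (pointwise in $x$ for the $F$-scale), while (iii) reduces, via $B^{s,\varphi}_{p,p}=F^{s,\varphi}_{p,p}$, to the sequence-space estimates between $\lql$ and $\lpl$ that follow from (generalized) Minkowski. You spell out the Minkowski/Fubini steps and the truncation to $j\geq j_P\vee 0$ somewhat more explicitly than the paper, which simply cites monotonicity, H\"older, and Minkowski, but the underlying argument is identical.
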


\begin{proof}
The properties (i), (ii) are simple corollaries of both the monotonicity of the $\ell^q$-norm and H\"older's inequality. Hence we omit the details.
We prove (iii). It is trivial that $B_{p,p}^{s,\varphi}(\rd)=F_{p,p}^{s,\varphi}(\rd)$.
If $0<q\leq p<\infty$, then $\ftt(\rd)\eb F_{p,p}^{s,\varphi}(\rd)=B_{p,p}^{s,\varphi}(\rd)=B_{p,\max(p,q)}^{s,\varphi}$.
Thus, it remains to verify that $\btt(\rd)\eb\ftt(\rd)$. To this end,
it suffices to show that  $\|\{f_j\}_{j\in\no}\mid\lpl\|\leq\|\{f_j\}_{j\in\no}\mid\lql\|$.
But this follows from the (generalized) Minkowski inequality. The proof of the case $q>p$ is similar.
Hence the proof is complete.
\end{proof}

\begin{theorem}\label{sas}
Let $s\in\real$, $0<p<\infty$, $0<q\leq\infty$ and $\varphi\in\Gp$. Assume in addition that $\varphi$ satisfies \eqref{rintc} when $0<q<\infty$ and $A_{p,q}^{s,\varphi}(\rd)=\ftt(\rd)$. Then
\begin{align*}
     \sd\eb A_{p,q}^{s,\varphi}(\rd)\eb\sdd.
\end{align*}
\end{theorem}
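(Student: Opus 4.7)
The proof separates into the two embeddings. For $\sd\eb A_{p,q}^{s,\varphi}(\rd)$, take $f\in\sd$ and set $f_j:=\mathcal{F}^{-1}(\theta_j\mathcal{F}f)$. Standard integration by parts using the annular support of $\theta_j$ yields, for any $N,M\in\no$,
\[|f_j(x)|\leq C_{N,M}\,2^{-jN}(1+|x|)^{-M}\|f\|_{\sd,\ell},\qquad x\in\rd,\ j\in\no,\]
where $\|\cdot\|_{\sd,\ell}$ denotes a Schwartz seminorm of suitable order $\ell=\ell(N,M,d)$. Taking $M>d/p$ gives $\int_P(1+|x|)^{-Mp}\,\dint x\lesssim\min(|P|,1)$ on every dyadic cube $P$, while the $\Gp$ condition forces $\varphi(\ell(P))\gtrsim\min(|P|^{1/p},1)$ (since $t^{-d/p}\varphi(t)$ is nonincreasing). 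Hence $\|f_j\|_{L^p(P)}/\varphi(\ell(P))\lesssim 2^{-jN}\|f\|_{\sd,\ell}$ uniformly in $P\in\mq$, and choosing $N>|s|$ makes the $\lql$ quasi-norm of $\{2^{js}f_j\}_j$ finite. For the $F$-case, first bound $(\sum_j|2^{js}f_j(x)|^q)^{1/q}\lesssim(1+|x|)^{-M}\|f\|_{\sd,\ell}$ as a geometric series before taking $L^p(P)$ and dividing by $\varphi(\ell(P))$, giving the same conclusion for $\lpl$.

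For $A_{p,q}^{s,\varphi}(\rd)\eb\sdd$, fix $f\in A_{p,q}^{s,\varphi}(\rd)$ and $\psi\in\sd$. Telescoping $\sum_{j=0}^N\theta_j=\theta_0(2^{-N}\cdot)$, combined with the fact that $\theta_0(2^{-N}\cdot)\mathcal{F}^{-1}\phi\to\mathcal{F}^{-1}\phi$ in $\sd$ for every $\phi\in\sd$, implies $f=\sum_j f_j$ in $\sdd$ and $\psi=\sum_k\psi_k$ in $\sd$, where $\psi_k:=\mathcal{F}^{-1}(\theta_k\mathcal{F}\psi)$. Disjointness of the annular supports of $\theta_j$ and $\theta_k$ for $|j-k|\geq 2$, together with Plancherel, forces $\int f_j\psi_k\,\dint x=0$ in that regime, so
\[\langle f,\psi\rangle=\sum_{j\geq 0}\sum_{|k-j|\leq 1}\int_{\rd}f_j(x)\psi_k(x)\,\dint x.\]
Partition $\rd=\bigcup_{y\in\zd}P_y$ into unit cubes and extract the single-index bound
\[\|f_j\|_{L^p(P_y)}\leq\varphi(1)\,2^{-js}\|f\mid A_{p,q}^{s,\varphi}(\rd)\|,\]
obtained by keeping a single summand in the defining $\ell^q$ or $L^p(\ell^q)$ quasi-norm (valid in both the $B$- and $F$-settings), together with the Schwartz estimate $|\psi_k(x)|\leq C_{N,M}2^{-kN}(1+|x|)^{-M}\|\psi\|_{\sd,\ell'}$.

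For $p\geq 1$, H\"older's inequality on each $P_y$, summation over $y\in\zd$ (convergent for $M>d$), and the geometric factor $2^{-j(s+N)}$ (summable once $N>|s|$) produce $|\langle f,\psi\rangle|\lesssim\|\psi\|_{\sd,\ell'}\,\|f\mid A_{p,q}^{s,\varphi}(\rd)\|$, which is exactly the continuity of the embedding. For $0<p<1$ H\"older is unavailable; instead I would invoke the Plancherel-Polya-Nikol'skii inequality $\|f_j\|_{L^\infty(P_y)}\lesssim 2^{jd/p}\|f_j\|_{L^p(\tilde P_y)}$ on a slight enlargement $\tilde P_y$ (valid because $\supp\mathcal{F}f_j\subset\{|\xi|\leq c2^j\}$) and pair $\|f_j\|_{L^\infty(P_y)}$ with $\|\psi_k\|_{L^1(P_y)}$; the same decay-and-sum scheme then closes the argument provided $N>|s|+d/p$. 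The main technical obstacle is precisely this quasi-Banach reduction in the range $p<1$; otherwise the proof is a routine combination of definition-level bounds, Schwartz decay, and Fourier-support disjointness.
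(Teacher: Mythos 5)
Both directions of your argument are sound, but the two halves stand differently relative to the paper.

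For $\sd\eb A_{p,q}^{s,\varphi}(\rd)$ you use essentially the same mechanism as the paper: a decay estimate of the form $|(\mathcal{F}^{-1}\theta_j)\ast f(x)|\lesssim 2^{-jN}(1+|x|)^{-M}\|f\|_{\mathcal{S}_\ell}$, then integration over a fixed dyadic cube $P$ and a $\Gp$-lower bound on $\varphi(\ell(P))$. The paper formalizes the decay estimate via Lemma~\ref{sm} (\cite[Lemma~2.4]{ysy10}) and splits into the cases $j_P>0$ and $j_P\leq 0$; your formulation $\varphi(\ell(P))\gtrsim\min(|P|^{1/p},1)$ paired with $\int_P(1+|x|)^{-Mp}\,\dint x\lesssim\min(|P|,1)$ collapses the two cases into one uniform bound, which is a stylistic streamlining rather than a different proof.

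For $A_{p,q}^{s,\varphi}(\rd)\eb\sdd$ your route genuinely diverges. The paper reduces to $s\gg 1$ via the lift operator $I_\varkappa$ (Theorem~\ref{lift}) together with $A_{p,q}^{s,\varphi}(\rd)\eb B_{p,\infty}^{s,\varphi}(\rd)$, and then appeals to Lemma~\ref{abs}, which proves $A_{p,q}^{s,\varphi}(\rd)\eb B^0_{\infty,1}(\rd)\eb C(\rd)\eb\sdd$ under \eqref{nesc}; the heart of Lemma~\ref{abs} is the pointwise bound $|\mathcal{F}^{-1}(\theta_j\mathcal{F}f)(x)|\lesssim\varphi(2^{-j})\,2^{-j(s-d/p)}\|f\mid B_{p,\infty}^{s,\varphi}(\rd)\|$, obtained from Peetre-type maximal functions and the $L^{p/r}$-boundedness of $\mhl$ on a single cube of side $2^{-j}$. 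You instead test $f$ directly against $\psi\in\sd$: Littlewood-Paley decompose both sides, kill the terms with $|j-k|\geq 2$ by Fourier-support disjointness, tile $\rd$ by unit cubes, pull the crude single-index bound $\|\mathcal{F}^{-1}(\theta_j\mathcal{F}f)\mid L^p(P_y)\|\leq\varphi(1)2^{-js}\|f\mid A_{p,q}^{s,\varphi}(\rd)\|$ directly out of the quasi-norm, and close with H\"older (if $p\geq 1$) or Plancherel-Polya-Nikol'skii plus an $L^\infty\times L^1$ pairing (if $p<1$). This is more elementary and self-contained — it needs neither the lifting theorem nor the maximal-operator machinery of Section~3 — and in that sense it is a cleaner proof of exactly the stated embedding. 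What it gives up is the intermediate information the paper extracts along the way, namely the quantitative embedding into $B^0_{\infty,1}(\rd)$ (and hence into $C(\rd)$) under the integrability condition \eqref{nesc}, which the paper isolates as Lemma~\ref{abs} because it is of independent interest and is sharper than the $s>d/p$ range your argument effectively uses.
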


Before we prove Theorem~\ref{sas}, we need the following two lemmas. For $M\in\no$ and $\psi\in\sd$ we use the notation
\begin{align*}
\|\psi\mid   \mathcal{S}_M(\rd)\| = \sup_{\alpha\in\no^n, |\alpha|\leq M}\ \sup_{x\in \rd} |\Dd^\alpha \psi(x)| (1+|x|)^{\nd+M+|\alpha|}.
\end{align*}

\begin{lemma}{\rm{(\cite[Lemma 2.4]{ysy10})}}\label{sm}
Let $M\in\no$ and $\phi,\psi\in\sd$ with $\phi$ satisfying $\int_{\rd}x^{\gamma}\phi(x)\,\dint x=0$
for all multi-indices $\gamma\in\no^d$ satisfying $|\gamma|\leq M$.
Then there exists a positive constant $C$ such that for all $j\in\no$ and $x\in\rd$,
\begin{align*}
    |\phi_j\ast\psi(x)|\leq C\|\phi\mid{\mathcal{S}_{M+1}}(\rd)\|\|\psi\mid{\mathcal{S}_{M+1}}(\rd)\|\frac{2^{-jM}}{(1+|x|)^{d+M}}.
\end{align*}
\end{lemma}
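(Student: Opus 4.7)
The plan is to exploit the moment cancellation of $\phi$ via a Taylor expansion, which is the standard Calder\'on-Zygmund trick behind such pointwise kernel estimates. First I would observe that the moment-vanishing property passes to the dilated functions $\phi_j(y):=2^{jd}\phi(2^jy)$, so that
\begin{align*}
\int_{\rd} y^\gamma\phi_j(y)\,\dint y=0\qquad\text{for all }|\gamma|\leq M.
\end{align*}
Hence, for any polynomial $P$ in $y$ of degree at most $M$, one has $\phi_j\ast\psi(x)=\int\phi_j(y)[\psi(x-y)-P(y)]\,\dint y$.

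I would then take $P(y)$ to be the Taylor polynomial of $\psi(x-\cdot)$ at $0$ of order $M-1$ (as a polynomial in $y$), giving the integral remainder
\begin{align*}
\psi(x-y)-P(y)=M\sum_{|\gamma|=M}\frac{(-y)^\gamma}{\gamma!}\int_0^1(1-t)^{M-1}\Dd^\gamma\psi(x-ty)\,\dint t.
\end{align*}
So the problem reduces to bounding $\int\phi_j(y)R(x,y)\,\dint y$ where $|R(x,y)|\lesssim |y|^M\sup_{t\in[0,1],|\gamma|=M}|\Dd^\gamma\psi(x-ty)|$. The factor $|y|^M$ is precisely what produces the gain $2^{-jM}$ after convolving against $\phi_j$.

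To extract both the $2^{-jM}$ and the $(1+|x|)^{-(d+M)}$ factors simultaneously, I would split the integration domain into the near region $|y|\leq (1+|x|)/2$ and the far region $|y|>(1+|x|)/2$. On the near region, $|x-ty|\geq(1+|x|)/2$ for every $t\in[0,1]$, so the bound $|\Dd^\gamma\psi(z)|\leq\|\psi\mid\mathcal{S}_{M+1}(\rd)\|(1+|z|)^{-(d+2M+1)}$ for $|\gamma|=M$ gives the spatial decay $(1+|x|)^{-(d+M)}$ with room to spare, while the computation $\int 2^{jd}(1+2^j|y|)^{-(d+M+1)}|y|^M\,\dint y\sim 2^{-jM}$ (using the pointwise bound on $\phi_j$ supplied by $\|\phi\mid\mathcal{S}_{M+1}\|$) produces the scale factor. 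On the far region, one uses instead the rapid decay of $\phi_j$ together with $|y|\gtrsim 1+|x|$ to extract both $2^{-jM}$ and $(1+|x|)^{-(d+M)}$ at once, estimating $\sup_t|\Dd^\gamma\psi(x-ty)|$ by the Schwartz norm of $\psi$.

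The main obstacle is purely bookkeeping: one has to track the exponents carefully in both regions so as to end up with exactly $2^{-jM}(1+|x|)^{-(d+M)}$, and verify that the $\mathcal{S}_{M+1}$-norms of $\phi$ and $\psi$ are just strong enough to absorb the weight $|y|^M$ in the remainder together with the long-range part of the convolution. Once the two-region split is set up correctly, the remaining estimates reduce to elementary integral computations of the type $\int (1+|u|)^{-(d+M+1)}|u|^M\,\dint u<\infty$.
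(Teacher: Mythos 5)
Your near-region computation is sound, and the overall strategy---use the vanishing moments of $\phi$ to subtract a Taylor polynomial of $\psi(x-\cdot)$, then split according to $|y|\lessgtr(1+|x|)/2$---is a reasonable one. However, the far-region step as you describe it is not a matter of bookkeeping: it simply fails to produce the required spatial decay. In the region $|y|>(1+|x|)/2$ you bound $\sup_{t\in[0,1]}|\Dd^\gamma\psi(x-ty)|$ by the Schwartz norm alone, and indeed this loss is forced, since the segment $\{ty:t\in[0,1]\}$ may pass through a neighbourhood of $x$, so one cannot retain any $(1+|x-ty|)$-decay pointwise. What then remains is
\begin{equation*}
\int_{|y|>(1+|x|)/2}2^{jd}(1+2^j|y|)^{-(d+M+1)}|y|^M\,\dint y
 = 2^{-jM}\int_{|u|>2^j(1+|x|)/2}\frac{|u|^M\,\dint u}{(1+|u|)^{d+M+1}}
 \lesssim 2^{-jM}\bigl(2^j(1+|x|)\bigr)^{-1},
\end{equation*}
because the integrand decays only like $|u|^{-(d+1)}$ at infinity. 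The resulting $x$-decay is $(1+|x|)^{-1}$, not $(1+|x|)^{-(d+M)}$: the decay $(1+|y|)^{-(d+M+1)}$ supplied by $\|\phi\mid\mathcal{S}_{M+1}(\rd)\|$, once multiplied by the $|y|^M$ from the Taylor remainder, leaves a tail that is too heavy.

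A correct far-region argument must avoid the remainder form there. With $P$ the subtracted Taylor polynomial, write
\begin{equation*}
\Bigl|\int_{|y|>(1+|x|)/2}\phi_j(y)\bigl[\psi(x-y)-P(y)\bigr]\,\dint y\Bigr|
 \leq \int_{\mathrm{far}}|\phi_j(y)||\psi(x-y)|\,\dint y+\int_{\mathrm{far}}|\phi_j(y)||P(y)|\,\dint y.
\end{equation*}
For the first piece, peel off $(1+2^j|y|)^{-M}\lesssim 2^{-jM}(1+|x|)^{-M}$ from the $\phi_j$-weight (possible because $2^j|y|>2^j(1+|x|)/2$ on the far region with $j\geq 0$) and apply the standard convolution bound $\int 2^{jd}(1+2^j|y|)^{-(d+1)}(1+|x-y|)^{-(d+1)}\,\dint y\lesssim(1+|x|)^{-(d+1)}$; this gives $2^{-jM}(1+|x|)^{-(d+M+1)}$. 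For the second piece, the Taylor coefficients $\Dd^\gamma\psi(x)$, $|\gamma|<M$, are themselves $\lesssim\|\psi\mid\mathcal{S}_{M+1}(\rd)\|(1+|x|)^{-(d+M+1+|\gamma|)}$, and since $\int_{\mathrm{far}}|\phi_j(y)||y|^{|\gamma|}\,\dint y\lesssim 2^{-j(M+1)}(1+|x|)^{|\gamma|-M-1}$, one again obtains the correct order. (Note also that your remainder formula degenerates at $M=0$; in that case no Taylor subtraction is needed and the bound follows directly from the convolution estimate.) The paper itself does not prove this lemma but imports it from \cite[Lemma 2.4]{ysy10}, so the comparison here is with the standard proof.
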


\begin{lemma}\label{abs}
Let $0<p<\infty$, $0<q\leq\infty$ and $\varphi\in\Gp$.
Assume in addition that $\varphi$ satisfies \eqref{rintc} when $0<q<\infty$ and $A_{p,q}^{s,\varphi}(\rd)=\ftt(\rd)$.
Let
\begin{align}\label{nesc}
    \sum_{j=\jjp}^\infty\frac{\varphi(2^{-j})}{2^{j(s-\frac{d}{p})}}<\infty
\end{align}
be satisfied. Then
\begin{align*}
     A_{p,q}^{s,\varphi}(\rd)\eb B_{\infty,1}^0(\rd).
\end{align*}
In particular, for such $s$,
\begin{align*}
     A_{p,q}^{s,\varphi}(\rd)\eb B_{\infty,1}^0(\rd)\eb\ C(\rd)\eb\sdd,
\end{align*}
where $C(\rd)$ stands for the Banach space consisting of bounded uniformly continuous functions.
\end{lemma}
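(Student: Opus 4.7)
The plan is to control each Littlewood-Paley piece $f_j:=\mathcal{F}^{-1}(\theta_j\mathcal{F}f)$ in $L^\infty(\rd)$ by localized $L^p$-averages at the Nyquist scale $2^{-j}$, then sum in $j$. First I would reduce the $F$-case to the $B$-case via Proposition~\ref{fdmt}(iii), which gives $\ftt(\rd)\eb B_{p,\max(p,q)}^{s,\varphi}(\rd)$, so it suffices to prove the embedding for $\btt(\rd)$.

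For every $j\in\no$, $f_j$ has Fourier support in $\{|\xi|\leq 2^{j+1}\}$, so a local Plancherel-Polya / Bernstein-Nikolskii argument should yield
\begin{equation*}
\|f_j\mid L^\infty(\rd)\|\lesssim 2^{jd/p}\sup_{P\in\mq,\,\ell(P)=2^{-j\vee 0}}\|f_j\mid L^p(P)\|.
\end{equation*}
Concretely, one writes $f_j=f_j\ast\psi_j$ for a Schwartz $\psi_j$ with $\mathcal{F}\psi_j\equiv 1$ on $\supp\mathcal{F}f_j$, uses the decay $|\psi_j(z)|\lesssim 2^{jd}(1+2^j|z|)^{-M}$, and decomposes $|f_j(x)|\leq\int|f_j(y)||\psi_j(x-y)|\,\dint y$ into dyadic annuli $A_k=\{y:2^{-j+k-1}\leq|x-y|\leq 2^{-j+k}\}$, bounding each annular piece by H\"older and by covering $A_k$ with $O(2^{kd})$ dyadic cubes of side $2^{-j}$; choosing $M>d$ makes the resulting geometric series converge. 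On the other hand, testing the $\btt(\rd)$-quasi-norm against a single dyadic cube $P$ with $\ell(P)=2^{-j\vee 0}$ and keeping only the level-$j$ term in the inner sum produces
\begin{equation*}
\|f_j\mid L^p(P)\|\leq 2^{-js}\varphi(2^{-j\vee 0})\|f\mid\btt(\rd)\|,
\end{equation*}
so combining the two estimates gives the uniform bound $\|f_j\|_\infty\lesssim 2^{j(d/p-s)}\varphi(2^{-j\vee 0})\|f\mid\btt(\rd)\|$ for every $j\in\no$.

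Summing in $j$ and invoking hypothesis~\eqref{nesc} then yields
\begin{equation*}
\|f\mid B_{\infty,1}^0(\rd)\|=\sum_{j=0}^{\infty}\|f_j\mid L^\infty(\rd)\|\lesssim\left(\sum_{j=0}^{\infty}\frac{\varphi(2^{-j})}{2^{j(s-d/p)}}\right)\|f\mid\btt(\rd)\|,
\end{equation*}
which is the desired embedding; the remaining chain $B_{\infty,1}^0(\rd)\eb C(\rd)\eb\sdd$ is classical, cf.\ \cite[Section~2.5]{t83}. The main technical obstacle is the local Nikolskii inequality above: the naive global version $\|g\|_\infty\lesssim 2^{jd/p}\|g\|_p$ for band-limited $g$ requires $g\in L^p(\rd)$, which fails generically in the Morrey-type scale, so one must carry out the local decomposition and carefully balance the rapid decay of $\psi_j$ against the geometric growth of the covering count; once this is in place, the rest of the argument is bookkeeping.
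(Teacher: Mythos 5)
Your reduction to the $B$-case and the overall strategy (local Nikolskii at scale $2^{-j}$, then test $\btt$ against dyadic cubes of side $2^{-j}$, then sum via \eqref{nesc}) is a genuinely different and more elementary route than the paper's. The paper instead runs the estimate through the Peetre maximal function: it chooses $0<r<\min(p,q)$, controls $|\mathcal{F}^{-1}(\theta_j\mathcal{F}f)(x)|^r$ pointwise by $\mathcal{M}_{\mathrm{HL}}(|\mathcal{F}^{-1}(\theta_j\mathcal{F}f)|^r)(y)$ for $y$ in a cube of side $2^{-j}$ around $x$, and then invokes the already-established boundedness of $\mathcal{M}_{\mathrm{HL}}$ on $\ell^\infty(L^{p/r}_\varphi)$ (Theorem~\ref{mhlp}(i)) to land directly in the $B^{s,\varphi}_{p,\infty}$-norm. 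Your route avoids Theorem~\ref{mhlp} entirely and uses only the $\btt$-definition plus a Plancherel-Polya inequality, which is arguably cleaner to read.

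However, there is a gap at exactly the point you flag as ``bookkeeping'': your annular estimate relies on H\"older's inequality to bound $\int_{A_k}|f_j(y)|\,\dint y$ by $|A_k|^{1-1/p}\|f_j\mid L^p(A_k)\|$, and this step simply fails when $0<p<1$ (the inequality goes the wrong way). Since the lemma is asserted for all $0<p<\infty$, this is not a cosmetic issue. To repair it one should work with the $p$-power form of the Peetre estimate directly, i.e.\
\begin{align*}
|f_j(x)|^p\lesssim 2^{jd}\int_{\rd}\frac{|f_j(y)|^p}{(1+2^j|x-y|)^{N}}\,\dint y,\qquad N\gg 1,
\end{align*}
which is valid for all $0<p<\infty$ (this is the substance of \cite[Sect.~1.3--1.4]{t83}); decomposing \emph{this} integral into annuli and covering by side-$2^{-j}$ dyadic cubes then gives your local Nikolskii uniformly in $p$ without invoking H\"older at all. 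Equivalently, one can quote the local Plancherel-Polya inequality of Lemma~\ref{lpp} (whose proof in the paper uses the self-improvement trick $|f(y)|=|f(y)|^{1-p}|f(y)|^p$ precisely to handle $p<1$). Note that once you take this power-$p$ route you have essentially reconstructed the Peetre-maximal-function estimate the paper uses, just without packaging it as $\mathcal{M}_{\mathrm{HL}}$; so the two approaches converge at the technical core, and the paper's choice of $r<\min(p,q)$ together with Theorem~\ref{mhlp} is exactly what makes the $p<1$ case painless there. With that single repair your argument is correct, and the remaining steps (testing $\btt$ on a single cube to get $\|f_j\mid L^p(P)\|\leq 2^{-js}\varphi(2^{-j})\|f\mid\btt(\rd)\|$, and summing using \eqref{nesc}) are fine.
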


\begin{proof}
Let $f\in\btt(\rd)$, and let $0<r<\min(p,q)$. We first prove the following inequality
\begin{align}\label{zjgc}
    |\mathcal{F}^{-1}(\theta_j\mathcal{F}f)(x)|\lesssim\frac{\varphi(2^{-j})}{2^{j(s-\frac{d}{p})}}\cdot\|f\mid\btt(\rd)\|.
\end{align}
By appropriate substitution of variables, applying the inequality
$1+|2^j(y-\widetilde{z})|^{\frac{d}{r}}\lesssim(1+|2^j(y-x)|^\frac{d}{r})(1+|2^j(x-\widetilde{z})|^\frac{d}{r})\lesssim(1+|2^j(x-\widetilde{z})|^\frac{d}{r})$
for all $y$ with $|y-x|\lesssim 2^{-j}$, \eqref{asse1} and H\"older's inequality,
\begin{align*}
    |\mathcal{F}^{-1}(\theta_j\mathcal{F}f)(x)|^r
    &\leq \left[\sup_{z\in\rd}\frac{\mathcal{F}^{-1}(\theta_j\mathcal{F}f)(x-z)}{1+|2^{j}z|^\frac{d}{r}}\right]^r
    \sim\left[\sup_{\widetilde{z}\in\rd}\frac{|\mathcal{F}^{-1}(\theta_j\mathcal{F}f)(\widetilde{z})|}
    {1+|2^j(x-\widetilde{z})|^\frac{d}{r}}\right]^r\\
    &\lesssim \left[\sup_{\widetilde{z}\in\rd}\frac{|\mathcal{F}^{-1}(\theta_j\mathcal{F}f)(\widetilde{z})|}
    {1+|2^j(y-\widetilde{z})|^\frac{d}{r}}\right]^r
    \sim \left[\sup_{z\in\rd}\frac{|\mathcal{F}^{-1}(\theta_j\mathcal{F}f)(y-z)|}{1+|2^j(z)|^\frac{d}{r}}\right]^r\\
    &\lesssim\mhl(|\mathcal{F}^{-1}(\theta_j\mathcal{F}f)|^r)(y)\\
    &\lesssim 2^{j d}\int_{P(x,2^{-j})}\mhl(|\mathcal{F}^{-1}(\theta_j\mathcal{F}f)|^r)(y)\,\dint y\\
    &\lesssim 2^{j d}\left\{\int_{P(x,2^{-j})}\left[\mhl(|\mathcal{F}^{-1}(\theta_j\mathcal{F}f)|^r)(y)
    \right]^{\frac{p}{r}}\,\dint y\right\}^\frac{r}{p}\cdot\left(\int_{P(x,2^{-j})}1\,\dint y\right)^{1-\frac{r}{p}}\\
    &= 2^{j d \frac{r}{p}}\varphi(2^{-j})^r\frac{1}{\varphi(2^{-j})^r}\left\{\int_{P(x,2^{-j})}\left[
    \mhl(|\mathcal{F}^{-1}(\theta_j\mathcal{F}f)|^r)(y)\right]^{\frac{p}{r}}\,\dint y\right\}^\frac{r}{p}\\
    &= 2^{-j(s-\frac{d}{p})r}\varphi(2^{-j})^r\frac{2^{j s r}}{\varphi(2^{-j})^r}\left\{
    \int_{P(x,2^{-j})}\left[\mhl(|\mathcal{F}^{-1}(\theta_j\mathcal{F}f)|^r)(y)\right]^{\frac{p}{r}}\,\dint y\right\}^\frac{r}{p}.
\end{align*}
To obtain \eqref{zjgc}, it is sufficient to show that
\begin{align*}
    \frac{1}{\varphi(2^{-j})}2^{j s}\left\{\int_{P(x,2^{-j})}
    \left[\mhl(|\mathcal{F}^{-1}(\theta_j\mathcal{F}f)|^r)(y)\right]^{\frac{p}{r}}\,\dint y\right\}^\frac{1}{p}
    \lesssim\|f\mid B_{p,\infty}^{s,\varphi}(\rd)\|.
\end{align*}
Let $\nu\geq j$. Then by (i) of Theorem~\ref{mhlp}, we have
\begin{align*}
    &\frac{1}{\varphi(2^{-j})}2^{j s}\left\{\int_{P(x,2^{-j})}\left[
    \mhl(|\mathcal{F}^{-1}(\theta_j\mathcal{F}f)|^r)(y)\right]^{\frac{p}{r}}\,\dint y\right\}^\frac{1}{p}\\
    &\qquad \leq\frac{1}{\varphi(2^{-j})}\sup_{\nu\geq j}2^{\nu s}\left\{\int_{P(x,2^{-j})}\left[
    \mhl(|\mathcal{F}^{-1}(\theta_j\mathcal{F}f)|^r)(y)\right]^{\frac{p}{r}}\,\dint y\right\}^\frac{1}{p}\\
    &\qquad \lesssim\frac{1}{\varphi(2^{-j})}\sup_{\nu\geq j}2^{\nu s}\left\{
    \int_{P(x,2^{-j})}|\mathcal{F}^{-1}(\theta_j\mathcal{F}f)(y)|^p\,\dint y\right\}^\frac{1}{p}\\
    &\qquad \lesssim\sup_{j}\frac{1}{\varphi(2^{-j})}\sup_{\nu\geq j}2^{\nu s}\left\{
    \int_{P(x,2^{-j})}|\mathcal{F}^{-1}(\theta_j\mathcal{F}f)(y)|^p\,\dint y\right\}^\frac{1}{p}\\
    &\qquad \lesssim\|f\mid B_{p,\infty}^{s,\varphi}(\rd)\|.
\end{align*}
Thus, we obtain \eqref{zjgc} and then by \eqref{nesc}, we have
\begin{align*}
    \|f\mid B_{\infty,1}^0(\rd)\|\lesssim\|f\mid\btt(\rd)\|.
\end{align*}
This together with (iii) of Proposition~\ref{fdmt}, we obtain
\begin{align*}
    A_{p,q}^{s,\varphi}(\rd)\eb B_{\infty,1}^0(\rd)\eb C(\rd)\eb\sdd.
\end{align*}
\end{proof}

\begin{remark}
  \begin{enumerate}[\bfseries\upshape  (i)]
  \item
    Note that \eqref{nesc} is always satisfied when $s>\frac{d}{p}$ since any $\varphi\in \Gp$ satisfies $\varphi(2^{-j}) \leq \varphi(1)$, $j\geq 0$. So the interesting case is \eqref{nesc} with $s\leq \frac{d}{p}$.
        \item If $\varphi:=1$, then \eqref{nesc} reads as $s>\frac{d}{p}$, which means that $B_{p,q}^s(\rd)\eb B_{\infty,1}^0(\rd)\eb C(\rd)$ (cf. \cite{t83}).
        \item If $\varphi(t):= t^{d\tau}$ with $0\leq\tau<\infty$, then \eqref{nesc} reads as $s>\frac{d}{p}-d\tau$, which means that
        $B_{p,q}^{s,\tau}(\rd)\eb B_{\infty,1}^0(\rd)\eb C(\rd)$. However, since $B_{p,q}^{s,\tau}(\rd)\eb B_{\infty,\infty}^{s+d\tau-\frac{d}{p}}(\rd)$, and $B_{\infty,\infty}^{s+d\tau-\frac{d}{p}}(\rd)\eb B_{\infty,1}^0(\rd)\eb C(\rd)$ if $s+d\tau-\frac{d}{p}>0$ (cf. \cite{yy13b}), this is well-known.
    \end{enumerate}
\end{remark}

Now we are ready to give the proof of Theorem~\ref{sas}.

\begin{proof}[Proof of Theorem~\ref{sas}.]
We prove $\sd\eb\Att(\rd)$. Let $f\in\sd$ and $P$ be an arbitrary dyadic cube.
If $j_P>0$, applying Lemma~\ref{sm} with $M>\max(0, \frac{d}{p}-d, s+\frac{d}{p})$ and \eqref{gpi}, we have
\begin{align*}
    &\frac{1}{\varphi(\ell(P))}\left\{\sum_{j=j_P}^\infty\left[
    \int_P(2^{js}|\mathcal{F}^{-1}(\theta_j\mathcal{F}f)(x)|)^p
    \,\dint x\right]^\frac{q}{p}\right\}^\frac{1}{q}\\
    &\qquad=\frac{1}{\varphi(\ell(P))}\left\{\sum_{j=j_P}^\infty\left[
    \int_P(2^{js}|\mathcal{F}^{-1}\theta_j\ast f(x)|)^p
    \,\dint x\right]^\frac{q}{p}\right\}^\frac{1}{q}\\
    &\qquad\lesssim\|\mathcal{F}^{-1}\theta\mid\mathcal{S}_{M+1}(\rd)\|
    \|f\mid\mathcal{S}_{M+1}(\rd)\|\frac{1}{\varphi(\ell(P))}\left\{\sum_{j=j_P}^\infty\left[
    \int_P\frac{2^{j(s-M)p}}{(1+|x|)^{(d+M)p}}\,\dint x\right]^\frac{q}{p}\right\}^\frac{1}{q}\\
    &\qquad\lesssim\|\mathcal{F}^{-1}\theta\mid\mathcal{S}_{M+1}(\rd)\|\|f\mid\mathcal{S}_{M+1}(\rd)\|2^{j_P(s+\frac{d}{p}-M)}\\
    &\qquad\lesssim\|\mathcal{F}^{-1}\theta\mid\mathcal{S}_{M+1}(\rd)\|\|f\mid\mathcal{S}_{M+1}(\rd)\|.
\end{align*}
If $j_P\leq0$, then $\varphi(\ell(P))^{-1}\leq1$. Using Lemma~\ref{sm} again with $M>\max(0, \frac{d}{p}-d, s)$, we have
\begin{align*}
    &\frac{1}{\varphi(\ell(P))}\left\{\sum_{j=0}^\infty\left[
    \int_P(2^{js}|\mathcal{F}^{-1}(\theta_j\mathcal{F}f)(x)|)^p\,\dint x\right]^\frac{q}{p}\right\}^\frac{1}{q}\\
    &\qquad\lesssim\|\mathcal{F}^{-1}\theta\mid\mathcal{S}_{M+1}(\rd)\|
    \|f\mid\mathcal{S}_{M+1}(\rd)\|\left\{\sum_{j=0}^\infty\left[
    \int_P\frac{2^{j(s-M)p}}{(1+|x|)^{(d+M)p}}\,\dint x\right]^\frac{q}{p}\right\}^\frac{1}{q}\\
    &\qquad\lesssim\|\mathcal{F}^{-1}\theta\mid\mathcal{S}_{M+1}(\rd)\|\|f\mid\mathcal{S}_{M+1}(\rd)\|.
\end{align*}
Thus, $\|f\mid\btt(\rd)\|\lesssim\|f\mid\mathcal{S}_{M+1}(\rd)\|$. This together with (iii) of Proposition~\ref{fdmt}, we obtain $\sd\eb\Att(\rd)$.

Finally, we prove $\Att(\rd)\eb\sdd$. From Theorem~\ref{lift} and $A_{p,q}^{s,\varphi}(\rd)\eb B_{p,\infty}^{s,\varphi}(\rd)$ in the sense of a continuous embedding, we only have to prove
\begin{align*}
    B_{p,\infty}^{s,\varphi}(\rd)\eb\sdd
\end{align*}
for $s\gg1$, which is already done in Lemma~\ref{abs}. Hence we finish the proof.
\end{proof}

Now we come to some new Sobolev-type embeddings.

\begin{theorem}\label{ste}
Let $0<p<\infty$, $0<r,q\leq\infty$, $\varphi\in\Gp$ and $-\infty<s_2<s_1<\infty$. If $0<p_1<p_2<\infty$ such that
\begin{align*}
    s_1-\frac{d}{p_1}=s_2-\frac{d}{p_2},
\end{align*}
  \begin{enumerate}[\bfseries \upshape (i)]
    \item  then
    \begin{align*}
        B_{p_1,q}^{s_1,\varphi}(\rd)\eb B_{p_2,q}^{s_2,\varphi}(\rd);
    \end{align*}
    \item  assume in addition that $\varphi$ satisfies \eqref{rintc} when $0<q<\infty$, then
    \begin{align*}
        F_{p_1,r}^{s_1,\varphi}(\rd)\eb F_{p_2,q}^{s_2,\varphi}(\rd).
    \end{align*}
\end{enumerate}
\end{theorem}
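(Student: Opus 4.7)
The plan is to adapt the classical Plancherel-P\'olya-Nikol'ski\u{\i} machinery to the localized Morrey-type setting, exploiting the band-limited character of $f_j := \mathcal{F}^{-1}(\theta_j\mathcal{F} f)$ together with the controlled growth of $\varphi \in \mathcal{G}_p$. A brief point of notation first: the same $\varphi$ appears in both spaces, so I interpret the hypothesis as $\varphi \in \mathcal{G}_{p_2}$; since $p_1 < p_2$, the inclusion $\mathcal{G}_{p_2} \subset \mathcal{G}_{p_1}$ (from the remark after the definition of $\mathcal{G}_p$) makes $\varphi$ admissible on the source side as well.

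For part (i), I would fix a dyadic cube $P$ and, for each $j \ge j_P \vee 0$, first establish the local Nikol'ski\u{\i}-type inequality
\[
\|f_j\|_{L^{p_2}(P)} \lesssim 2^{jd(1/p_1 - 1/p_2)}\,\|f_j\|_{L^{p_1}(c_0 P)}
\]
for some $c_0 = c_0(d) \ge 1$. The plan is to subdivide $P$ into subcubes $Q$ of side $2^{-j}$, bound $\sup_Q |f_j| \lesssim 2^{jd/p_1}\|f_j\|_{L^{p_1}(cQ)}$ by band-limited Plancherel-P\'olya (a direct consequence of \eqref{asse1}), raise to the $p_2$-th power and integrate over $Q$ (whose volume is $2^{-jd}$), and sum over $Q \subset P$ using $\sum_Q a_Q^{p_2/p_1} \le (\sum_Q a_Q)^{p_2/p_1}$ (valid because $p_2>p_1$) together with the bounded overlap of the dilates $cQ \subset c_0 P$. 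Multiplying by $2^{j s_2}$ and invoking the Sobolev identity $s_2 + d(1/p_1 - 1/p_2) = s_1$ absorbs the scaling factor into the smoothness shift. Taking the $\ell^q$-norm in $j \ge j_P \vee 0$, dividing by $\varphi(\ell(P))$, and covering $c_0 P$ by $O_d(1)$ dyadic cubes $R$ with $\ell(R) \sim \ell(P)$, the bound $\varphi(\ell(R))/\varphi(\ell(P)) \lesssim 1$ (which follows from $\varphi$ nondecreasing combined with $t^{-d/p_2}\varphi(t)$ nonincreasing, giving $\varphi(c_0 t)\le c_0^{d/p_2}\varphi(t)$) together with $\{j \ge j_P \vee 0\} \subset \{j \ge j_R \vee 0\}$ dominates the right-hand side by $\|f\mid B^{s_1,\varphi}_{p_1,q}(\rd)\|$. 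Taking the supremum over $P$ proves (i).

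For part (ii), I would first apply Proposition~\ref{fdmt}(i) to reduce to $r = \infty$, so it remains to prove $F^{s_1,\varphi}_{p_1,\infty}(\rd) \eb F^{s_2,\varphi}_{p_2,q}(\rd)$. The central new ingredient is the pointwise Peetre-Nikol'ski\u{\i} bound $|f_j(x)| \lesssim [\mhl(|f_j|^{r_0})(x)]^{1/r_0}$ from \eqref{asse1}, valid for any $0 < r_0 < \min(p_1,q)$. Interleaving this with a pointwise-in-$x$ variant of the subcube decomposition of part (i)---which is what restores the scale factor $2^{jd(1/p_1 - 1/p_2)}$ needed to convert $p_1$ into $p_2$---recasts the left-hand side as a vector-valued maximal expression; Theorem~\ref{mhlp}(ii), applied in $L^{p_2/r_0}_{\varphi^{r_0}}(\ell^{q/r_0})$, then closes the estimate. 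One checks that $\varphi^{r_0} \in \mathcal{G}_{p_2/r_0}$ and that \eqref{rintc} for $\varphi$ with exponent $\varepsilon$ upgrades to \eqref{rintc} for $\varphi^{r_0}$ with exponent $r_0\varepsilon$, so Theorem~\ref{mhlp}(ii) applies exactly under the hypothesis stated.

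The main obstacle will be part (ii). The pure pointwise Peetre bound is scale-free and cannot on its own manufacture the Sobolev shift from $p_1$ to $p_2$; the scale factor must be extracted by interleaving the pointwise estimate with the subcube argument of (i), and the $\ell^q$-sum inside the $L^{p_2}$-integral then has to be handled by the Morrey-type vector-valued maximal inequality of Theorem~\ref{mhlp}(ii). This is precisely why the extra condition \eqref{rintc} is required on $\varphi$ when $q < \infty$.
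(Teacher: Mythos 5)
Your plan for part (i) is sound and essentially coincides with the paper's argument: the key ingredient is the localized Nikol'ski\u{\i} inequality (the paper's Lemma~\ref{lpp}), giving $\|\fuff\mid L^{p_2}(P)\|\lesssim 2^{j(d/p_1-d/p_2)}\|\fuff\mid L^{p_1}(P)\|$ for dyadic $P$ with $\ell(P)\geq 2^{-j}$, after which the identity $s_2+d/p_1-d/p_2=s_1$ absorbs the scaling factor into the smoothness shift and the supremum over $P$ closes. Your subcube decomposition is exactly how one lifts the estimate from cubes of side $2^{-j}$ to $P$ itself, your treatment of the dilate $c_0P$ via $\varphi(c_0t)\leq c_0^{d/p}\varphi(t)$ is correct (and arguably more careful than the paper, whose Lemma~\ref{lpp} glosses over the dilate implicit in the $L^\infty$-to-$L^{p_1}$ step), and the remark $\mathcal{G}_{p_2}\subset\mathcal{G}_{p_1}$ for $p_1<p_2$ is needed and correct.

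Part (ii) is where you have a genuine gap, and you half-acknowledge it yourself. The maximal-operator circuit you propose is a closed loop: the Peetre bound $|f_j(x)|\lesssim[\mhl(|f_j|^{r_0})(x)]^{1/r_0}$ from \eqref{asse1} is scale-free, so feeding $\{2^{js_2}f_j\}$ into Theorem~\ref{mhlp}(ii) in $L^{p_2/r_0}_{\varphi^{r_0}}(\ell^{q/r_0})$ returns precisely $\|\{2^{js_2}f_j\}\mid L^{p_2}_{\varphi}(\ell^q)\|$ on the right-hand side --- no exponent change, no gain. The hoped-for ``interleaving with the subcube argument'' is exactly the missing idea and cannot be supplied by $\mhl$, because $\mhl$ is bounded on each $L^p$ separately but never mediates between different $p$'s; no vector-valued maximal inequality converts $L^{p_1}(\ell^r)$ into $L^{p_2}(\ell^q)$ with $p_1<p_2$. (Also, the hypothesis \eqref{rintc} appears in (ii) simply because the space $F^{s,\varphi}_{p,q}$ is only defined under that assumption, not because the proof of the embedding invokes Theorem~\ref{mhlp}(ii).)

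What the paper actually does for (ii) is a weak-type distribution-function argument in the Jawerth--Franke style. After lifting to $s_1=0$ (so $s_2=d/p_2-d/p_1<0$), reducing to $r=\infty$ and small $q$, and normalizing $\|f\mid F^{0,\varphi}_{p_1,\infty}(\rd)\|=1$, the localized Nikol'ski\u{\i} estimate together with the Morrey normalization yields the uniform pointwise bound $|\fuff(x)|\lesssim 2^{jd/p_1}$. Splitting the $\ell^q$-sum at a threshold $J=J(t)$ with $2^{Jd/p_2}\sim t$, the low-frequency block $j\leq J$ is $\lesssim t$ by geometric summation of $2^{js_2+jd/p_1}=2^{jd/p_2}$, while the tail $j>J$ is $\lesssim 2^{Js_2}\sup_j|\fuff(x)|\sim t^{1-p_2/p_1}\sup_j|\fuff(x)|$. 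Inserting this into $\|\cdot\|_{L^{p_2}(P)}^{p_2}=p_2\int_0^\infty t^{p_2-1}\,|\{\cdot>t\}|\,\dint t$ and substituting $t\mapsto t^{p_1/p_2}$ trades the $L^{p_2}$-integral of the $\ell^q$-sum for the $L^{p_1}$-integral of the $\ell^\infty$-sup; this is the mechanism that realizes the exponent shift, and it is this device --- not a maximal inequality --- that you need to reach for.
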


The following lemma is crucial to obtain Theorem~\ref{ste}.

\begin{lemma}\label{lpp}
Let $0<p_1<p_2\leq\infty$. Then for $f\in\mathcal{S}^{\Omega_Q}(\rd)$ and all cubes $Q(x, 2^{-j})\in\mq$, $j\in\mathbb{Z}$,
    \begin{align}\label{lppe}
        \|f\mid L^{p_2}(Q(x, 2^{-j}))\|\lesssim 2^{j(\frac{d}{p_1}-\frac{d}{p_2})}\|f\mid L^{p_1}(Q(x, 2^{-j}))\|,
    \end{align}
where $\mathcal{S}^{\Omega_Q}(\rd)$ stands for the set of all distributions whose Fourier transform is contained in the closure $\overline{Q(x, 2^{-j})}$.
\end{lemma}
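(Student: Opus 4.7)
Set $Q := Q(x, 2^{-j})$. The plan is to reduce the claim to the endpoint case $p_2 = \infty$ via Hölder's inequality on $Q$, and then to establish the endpoint using the Peetre-type pointwise inequality \eqref{asse1} already exploited in the proof of Theorem~\ref{assth1}.

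For $p_1 < p_2 < \infty$, Hölder's inequality yields
\[
\int_Q |f(y)|^{p_2}\,\dint y \le \|f\mid L^\infty(Q)\|^{p_2-p_1}\int_Q |f(y)|^{p_1}\,\dint y,
\]
so that $\|f\mid L^{p_2}(Q)\| \le \|f\mid L^\infty(Q)\|^{1-p_1/p_2}\,\|f\mid L^{p_1}(Q)\|^{p_1/p_2}$. Thus it suffices to prove the endpoint bound
\[
\|f\mid L^\infty(Q)\| \lesssim 2^{jd/p_1}\,\|f\mid L^{p_1}(Q)\|,
\]
since substituting it into the previous display immediately produces the claimed factor $2^{j(d/p_1-d/p_2)}$, and the case $p_2=\infty$ is this endpoint itself.

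To prove the endpoint, I would invoke \eqref{asse1} with diameter parameter $D_j=2^j$ (the scale associated to $\Omega_Q$) and some $r \in (0, p_1)$. This gives
\[
\sup_{z\in\rd}\frac{|f(y-z)|}{1+|2^j z|^{d/r}} \lesssim [\mhl(|f|^r)(y)]^{1/r}, \qquad y\in\rd.
\]
For fixed $y_0\in Q$ and any $y\in Q$, setting $z=y-y_0$ gives $|2^j z|\lesssim 1$, whence
\[
|f(y_0)|^r \lesssim \mhl(|f|^r)(y), \qquad y\in Q.
\]
Raising to the power $p_1/r$, averaging over $y\in Q$, and applying the standard $L^{p_1/r}(\rd)$-boundedness of $\mhl$ (valid since $p_1/r>1$) will yield a bound for $|f(y_0)|^{p_1}$ in terms of a suitable integral of $|f|^{p_1}$.

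The main obstacle is the final localization step: the Hardy-Littlewood maximal function above is a global object, whereas the right-hand side of the target endpoint bound involves only the $L^{p_1}$-norm on $Q$ itself. To bridge this gap I would use the reproducing-kernel identity $f = f\ast \psi_j$ with a Schwartz function $\psi_j = 2^{jd}\psi_0(2^j\cdot)$ satisfying $\mathcal{F}\psi_0 \equiv 1$ on the rescaled Fourier support of $\mathcal{F}f$, and split the convolution into a contribution from a mild enlargement of $Q$ (controlled by Hölder and yielding exactly the factor $2^{jd/p_1}$) and a tail (absorbed using the rapid decay of $\psi_0$ and the fact that $|y-z|\gtrsim 2^{-j}$ outside the enlarged cube). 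This mirrors the classical proof of the Plancherel-Polya-Nikolskii inequality and is the most delicate part of the argument.
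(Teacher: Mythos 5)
Your reduction of \eqref{lppe} to the endpoint bound $\|f\mid L^\infty(Q)\|\lesssim 2^{jd/p_1}\|f\mid L^{p_1}(Q)\|$ via $\|f\|_{L^{p_2}(Q)}\le\|f\|_{L^\infty(Q)}^{1-p_1/p_2}\|f\|_{L^{p_1}(Q)}^{p_1/p_2}$ is exactly what the paper does, and you also correctly diagnose that the Peetre inequality \eqref{asse1} alone cannot finish the job, since it bounds $|f(y_0)|^r$ by the Hardy--Littlewood maximal function $\mhl(|f|^r)$ evaluated at a nearby point, and averaging that over $Q$ followed by the $L^{p_1/r}(\rd)$-boundedness of $\mhl$ yields only the \emph{global} norm $\|f\|_{L^{p_1}(\rd)}$, not $\|f\|_{L^{p_1}(Q)}$.

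The gap lies in your proposed fix. Writing $f=f\ast\psi_j$ and splitting the convolution into a near part over an enlargement $cQ$ and a tail does not close, because the tail
\[
\int_{\rd\setminus cQ} f(z)\,\psi_j(y_0-z)\,\dint z
\]
involves the values of $f$ \emph{outside} $Q$, over which the hypothesis of the lemma provides no control whatsoever; the rapid decay of $\psi_0$ helps only if $|f|$ is integrable (or polynomially bounded) far from $Q$, which is not assumed. Moreover the near part naturally produces $\|f\|_{L^{p_1}(cQ)}$ with $c>1$, not $\|f\|_{L^{p_1}(Q)}$. The reproducing-kernel route, used by itself, recovers only a global Nikolskii inequality or an estimate with an enlarged cube plus a tail error, neither of which is the stated local bound.

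What the paper actually invokes to close the endpoint is the local sub-mean-value inequality for band-limited distributions from \cite[(1.3.1.3)]{t83} (a Plancherel--Polya--Nikolskii-type pointwise bound obtained via the analytic extension of $f$ to $\mathbb{C}^d$ and the plurisubharmonicity of $|F|^p$), namely
\[
|f(x_0)|\leq c\, 2^{jd}\int_{Q(x_0, 2^{-j})}|f(y)|\,\dint y,
\]
which holds pointwise and requires no global information about $f$ at all. From there, for $p_1\geq 1$ one applies H\"older, and for $0<p_1<1$ one peels off a power $|f|^{1-p_1}$ and absorbs the supremum. This pointwise sub-mean-value estimate, not a convolution split, is the essential ingredient your argument is missing; without deriving it (or citing it), the endpoint case is not established.
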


\begin{proof}
We first prove that
\begin{align}\label{hpe}
    \|f\mid L^{\infty}(Q(x, 2^{-j}))\|\lesssim 2^{j\frac{d}{p_1}}\|f\mid L^{p_1}(Q(x, 2^{-j}))\|,\qquad 0<p_1\leq\infty.
\end{align}
Here we follow similar arguments presented in \cite[Sect.~1.3.2]{t83}.
If $1\leq p_1\leq\infty$, then by \cite[(1.3.1.3)]{t83} and by H\"older's inequality, we have
\begin{align*}
    |f(x)|&\leq c\ 2^{jd}\int_{Q(x, 2^{-j})}|f(y)|\,\dint y
    \leq c\ 2^{jd}2^{-jd(1-\frac{1}{p_1})}\left(\int_{Q(x, 2^{-j})}|f(y)|^{p_1}\,\dint y\right)^\frac{1}{p_1}
    = c \ 2^{j\frac{d}{p_1}}\|f\mid L^{p_1}(Q(x, 2^{-j}))\|.
\end{align*}
If $0<p_1<1$, we have
\begin{align*}
    |f(x)|&\leq c\ 2^{jd}\int_{\rd}|\chi_{Q(x, 2^{-j})}f(y)|\,\dint y
    \lesssim 2^{jd}\left(\sup_{y\in Q(x,2^{-j})}|f(y)|\right)^{1-p_1}\int_{Q(x, 2^{-j})}|f(y)|^{p_1}\,\dint y.
\end{align*}
Taking the supremum with respect to $x\in Q(x, 2^{-j})$, we obtain \eqref{hpe}. Thus, by \eqref{hpe} and $p_1<p_2$, we have
\begin{align*}
    \|f\mid L^{p_2}(Q(x, 2^{-j}))\|
    &\lesssim \left(\sup_{y\in Q(x,2^{-j})}|f(y)|\right)^{1-\frac{p_1}{p_2}}
    \left(\int_{Q(x, 2^{-j})}|f(y)|^{p_1}\,\dint y\right)^\frac{1}{p_2}\\
    &\lesssim2^{j(\frac{d}{p_1}-\frac{d}{p_2})}\|f\mid L^{p_1}(Q(x, 2^{-j}))\|,
\end{align*}
which completes the proof.
\end{proof}

\begin{proof}[Proof of Theorem~\ref{ste}]
We first prove (i). Let $f\in B_{p_1,q}^{s_1,\varphi}(\rd)$. From Lemma~\ref{lpp}, we have
\begin{align*}
    &\frac{1}{\varphi(\ell(P))}\left\{\sum_{j=\jjp}^\infty2^{j s_2 q}\left(\int_P
    |\mathcal{F}^{-1}(\theta_j\mathcal{F}f)(x)|^{p_2}\,\dint x\right)^\frac{q}{p_2}\right\}^\frac{1}{q}\\
    &\qquad\lesssim\frac{1}{\varphi(\ell(P))}\left\{\sum_{j=\jjp}^\infty2^{j(s_2-\frac{d}{p_2}+\frac{d}{p_1})q}
    \left(\int_P|\mathcal{F}^{-1}(\theta_j\mathcal{F}f)(x)|^{p_1}\,\dint x\right)^\frac{q}{p_1}\right\}^\frac{1}{q}\\
    &\qquad\lesssim\|f\mid B_{p_1,q}^{s_1,\varphi}(\rd)\|.
\end{align*}

Next we prove (ii). By Theorem~\ref{lift}, we may assume that $s_1=0$. Then $s_2<0$.
Furthermore, we assume that $r=\infty$ and $0<q<1$.
Let $f\in F_{p_1,\infty}^{0,\varphi}(\rd)$ with $\|f\mid F_{p_1,\infty}^{0,\varphi}(\rd)\|=1$.
Then for all $P\in\mq$ with $\ell(P)=2^{-j}\leq 1$, by Lemma~\ref{lpp} and $\varphi\in\Gp$, we have
\begin{align*}
    |\fuff(x)|\lesssim 2^{\frac{jd}{p_1}}\left(\int_P|\fuff(y)|^{p_1}\,\dint y\right)^\frac{1}{p_1}
    \lesssim 2^{\frac{jd}{p_1}}\varphi(\ell(P))\|f\mid F_{p_1,\infty}^{0,\varphi}(\rd)\|
    \lesssim 2^{\frac{jd}{p_1}}.
\end{align*}
Let $J\in\no$. If $J\geq\jjp$, then
\begin{align*}
    \left(\sum_{j=\jjp}^J|2^{js_2}\fuff(x)|^q\right)^\frac{1}{q}
    \leq C\left(\sum_{j=\jjp}^J2^{j\frac{d}{p_2}q}\right)^\frac{1}{q}
    \leq C 2^{J\frac{d}{p_2}}\leq t,
\end{align*}
if $t\sim C 2^{J\frac{d}{p_2}}$, where $C$ is independent of $J$, and
\begin{align*}
    \left(\sum_{j=J+1}^\infty|2^{js_2}\fuff(x)|^q\right)^\frac{1}{q}
    \leq C 2^{Js_2}\sup_{j\in\no}|\fuff(x)|
    \leq C t^{1-\frac{p_2}{p_1}}\sup_{j\in\no}|\fuff(x)|.
\end{align*}
Combining the above two estimates, we have
\begin{align*}
    \|f\mid F_{p_2,q}^{s_2,\varphi}(\rd)\|^{p_2}
    &=\sup_{P\in\mq}\frac{p_2}{\varphi(\ell(P))}\int_0^\infty t^{p_2-1}\left|\left\{\left(
    \sum_{j=\jjp}^\infty |2^{js_2}\fuff|^q\right)^\frac{1}{q}>t\right\}\right|\,\dint t\\
    &\leq \sup_{P\in\mq}\frac{p_2}{\varphi(\ell(P))}\int_0^\infty t^{p_2-1}\left|\left\{
    \sup_{j\in\no}|\fuff|>Ct^{\frac{p_2}{p_1}}\right\}\right|\,\dint t\\
    &\lesssim \sup_{P\in\mq}\frac{p_1}{\varphi(\ell(P))}\int_0^\infty t^{p_1-1}\left|\left\{
    \sup_{j\in\no}|\fuff|>t\right\}\right|\,\dint t\\
    &\lesssim 1.
\end{align*}
Assume now $J<\jjp$. Note that
\begin{align*}
     \left(\sum_{j=\jjp}^\infty|2^{js_2}\fuff(x)|^q\right)^\frac{1}{q}\leq  \left(\sum_{j=J+1}^\infty|2^{js_2}\fuff(x)|^q\right)^\frac{1}{q}
\end{align*}
and by the same argument as above, we also have $\|f\mid F_{p_2,q}^{s_2,\varphi}(\rd)\|\lesssim 1$, which finishes the proof of Theorem~\ref{ste}.
\end{proof}

\begin{remark}
     Since we consider the above Sobolev-type embeddings with fixed $\varphi$, Theorem~\ref{ste} corresponds to \cite[Theorem 2.7.1]{t83} when $\varphi:=1$ and to \cite[Corollary 2.2]{ysy10} when $\varphi(t):=t^{d\tau}$ with $0\leq\tau<\infty$.
\end{remark}

\begin{proposition}\label{tt5}
    Let $s\in\real$, $0<p<\infty$, $0<q\leq\infty$ and $\varphi\in\Gp$. Assume in addition that $\varphi$ satisfies \eqref{rintc} when $0<q<\infty$ and $A_{p,q}^{s,\varphi}(\rd)$ denotes $F_{p,q}^{s,\varphi}(\rd)$. Then
    \begin{align*}
        A_{p,q}^{s,\varphi}(\rd)\eb B_{\infty,\infty}^{s-\frac{d}{p}}(\rd).
    \end{align*}
    In particular,
    \begin{align*}
        A_{p,q}^{s,\varphi}(\rd)\eb \mathcal{C}^{s-\frac{d}{p}}(\rd),\qquad\text{if}\quad s>\frac{d}{p},
    \end{align*}
    where $\mathcal{C}^\varkappa(\rd)$ with $\varkappa>0$ denotes the H\"older-Zygmund spaces.
\end{proposition}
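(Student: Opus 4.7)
The plan is to reduce the statement to the Besov case with $q=\infty$ and then invoke the pointwise estimate \eqref{zjgc} already established in the proof of Lemma~\ref{abs}. First, using Proposition~\ref{fdmt}(i) and (iii), I have the chain $F^{s,\varphi}_{p,q}(\rd)\hookrightarrow B^{s,\varphi}_{p,\max(p,q)}(\rd)\hookrightarrow B^{s,\varphi}_{p,\infty}(\rd)$ and $B^{s,\varphi}_{p,q}(\rd)\hookrightarrow B^{s,\varphi}_{p,\infty}(\rd)$, so it suffices to prove
\begin{equation*}
\|f\mid B^{s-d/p}_{\infty,\infty}(\rd)\|\lesssim\|f\mid B^{s,\varphi}_{p,\infty}(\rd)\|.
\end{equation*}

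Next, I would apply inequality \eqref{zjgc} from the proof of Lemma~\ref{abs} (whose derivation uses only Peetre's maximal-function trick, the $L^p$-boundedness of $\mhl$, and the Morrey-type norm evaluated on the dyadic cube $P(x,2^{-j})$; crucially, it does \emph{not} require the summability condition \eqref{nesc}). That inequality gives, for every $j\in\no$ and $x\in\rd$,
\begin{equation*}
|\mathcal{F}^{-1}(\theta_j\mathcal{F}f)(x)|\lesssim\frac{\varphi(2^{-j})}{2^{j(s-d/p)}}\,\|f\mid B^{s,\varphi}_{p,\infty}(\rd)\|.
\end{equation*}

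The key observation is that since $\varphi\in\Gp$ is nondecreasing, one has $\varphi(2^{-j})\leq\varphi(1)$ for every $j\in\no$; this is what turns the estimate into something uniform in $j$. Multiplying by $2^{j(s-d/p)}$ and taking the supremum over $x\in\rd$ and $j\in\no$ yields
\begin{equation*}
\sup_{j\in\no}2^{j(s-d/p)}\|\mathcal{F}^{-1}(\theta_j\mathcal{F}f)\mid L^\infty(\rd)\|\lesssim\varphi(1)\,\|f\mid B^{s,\varphi}_{p,\infty}(\rd)\|,
\end{equation*}
which is the desired embedding into $B^{s-d/p}_{\infty,\infty}(\rd)$. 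The ``in particular'' part then follows immediately from the classical identification $B^{\varkappa}_{\infty,\infty}(\rd)=\mathcal{C}^\varkappa(\rd)$ for $\varkappa>0$ recorded in the remark after Definition~2.1.

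There is no real obstacle here: all the analytic work is already packaged in \eqref{zjgc}. The only subtlety worth flagging is that \eqref{zjgc} was originally stated only as an intermediate step inside the proof of Lemma~\ref{abs} under the extra hypothesis \eqref{nesc}; however, inspecting that proof shows \eqref{nesc} enters only at the final summation stage, so \eqref{zjgc} is available unconditionally for any $\varphi\in\Gp$. Making this observation explicit is the one bookkeeping point the write-up must address.
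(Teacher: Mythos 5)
Your proof is correct but takes a genuinely different route from the paper's. The paper proves Proposition~\ref{tt5} directly: starting from the identity $\fuff=(\mathcal{F}\theta_j)\ast f\ast\phi_j$ it invokes a Plancherel--Polya/Nikolskii-type estimate from Frazier--Jawerth for band-limited functions, sums over the dyadic cubes $Q_{j,m}$ of side length $2^{-j}$, and absorbs the resulting factor $\varphi(2^{-j})$ using $\varphi(2^{-j})\lesssim 1$ for $j\in\no$. You instead first reduce to $B^{s,\varphi}_{p,\infty}(\rd)$ via Proposition~\ref{fdmt}(i),(iii), and then reuse the pointwise estimate \eqref{zjgc} established inside the proof of Lemma~\ref{abs}; your observation that the derivation of \eqref{zjgc} relies only on the Peetre maximal-function bound \eqref{asse1} and Theorem~\ref{mhlp}(i), and \emph{not} on the summability hypothesis \eqref{nesc} (which enters only in the subsequent summation over $j$ toward $B^0_{\infty,1}$), is correct and is exactly the bookkeeping point one must flag. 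Both arguments hinge on the same elementary fact that $\varphi\in\Gp$ gives $\varphi(2^{-j})\leq\varphi(1)$. Your route is more economical and clarifies that \eqref{zjgc} is the shared analytic core of Lemma~\ref{abs} and Proposition~\ref{tt5} — the paper itself hints at this connection in the remark immediately following Proposition~\ref{tt5}, but chooses to give an independent, self-contained derivation of the latter; the trade-off is that your version requires the reader to reopen the proof of Lemma~\ref{abs} and verify that its stated hypothesis \eqref{nesc} is not used for \eqref{zjgc}, whereas the paper's argument avoids such dependence on an intermediate step.
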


\begin{proof}
    Let $\phi\in\rd$ such that $\mathcal{F}\phi:=1$ on $\{\xi\in\rd:\,|\xi|\leq 1\}$. Then by \cite[(1.3.1.3)]{t83}, we have
    \begin{align*}
        \fuff=(\mathcal{F}\theta_j)\ast f\ast \phi_j.
    \end{align*}
    Thus, for all $j\in\no$ and $x\in\rd$, from \cite[(2.11)]{fj85}, we see that
    \begin{align*}
        |\fuff(x)|&\leq\sum_{m\in\zd}\int_{Q_{j,m}}|(\mathcal{F}\theta_j)\ast f(y)||\phi_j(x-y)|\,\dint y\\
        &\leq\sum_{m\in\zd}\sup_{z\in Q_{j,m}}|(\mathcal{F}\theta_j)\ast f(z)|\int_{Q_{j,m}}|\phi_j(x-y)|\,\dint y\\
        &\lesssim\sum_{m\in\zd}2^{j\frac{d}{p}}\left[\sum_{\ell\in\zd}(1+|\ell|)^{d-1}
        \int_{Q_{j,k+l}}|(\mathcal{F}\theta_j)\ast f(z)|^p\,\dint z\right]^\frac{1}{p}
        \times\int_{Q_{j,m}}|\phi_j(x-y)|\,\dint y\\
        &\lesssim2^{j(\frac{d}{p}-s)}\varphi(2^{-j})\|f\mid A_{p,q}^{s,\varphi}(\rd)\|
        \times\int_{Q_{j,m}}|\phi_j(x-y)|\,\dint y\\
        &\lesssim2^{j(\frac{d}{p}-s)}\|f\mid A_{p,q}^{s,\varphi}(\rd)\|,
    \end{align*}
    which finishes the proof of Proposition~\ref{tt5}.
\end{proof}

\begin{remark}
Note that for $s>\frac{d}{p}$, Proposition~\ref{tt5} together with Proposition~\ref{fdmt}(ii) yields another proof for Lemma~\ref{abs}. But as we already discussed there, the assumption $s>\frac{d}{p}$ is not needed for the statement of Lemma~\ref{abs}, but only the weaker one \eqref{nesc}.
\end{remark}

Now we would like to extend the result \cite[Prop.~2.7, p.47]{ysy10}. Let $L^p_\varphi(\rd)$, $0<p<\infty$, $\varphi\in\Gp$, be the set of all locally $p$-integrable functions $f:\rd\to\mathbb{C}$ such that
\[
\|f \mid L^p_\varphi(\rd)\| = \sup_{P\in \mq, |P|\geq 1} \frac{1}{\varphi(\ell(P))} \left(\int_P |f(x)|^p \dint x\right)^{\frac1p}.
  \]
  Plainly, $L^p_\varphi(\rd)= L^p(\rd)$ when $\varphi\equiv 1$, and $L^p_\varphi(\rd)=L^p_\tau(\rd)$ from \cite[eq. (2.18)]{ysy10}, when $\varphi(t)=t^{d\tau}$, $\tau\geq 0$. Obviously, if $\varphi_1, \varphi_2\in \Gp$ with $\varphi_1(t)\leq \varphi_2(t)$, $t\geq 1$, then $L^p_{\varphi_1}(\rd)\hookrightarrow L^p_{\varphi_2}(\rd)$. Moreover,  $L^p_\varphi(\rd)\subset \sdd$ for $1\leq p<\infty$, as can be seen as follows: let $\phi\in\sd$, then, for arbitrary $M\in\nat$,
  \begin{align*}
    |\langle f,\phi\rangle| & \leq \int_{\rd} |f(x)| |\phi(x)| \dint x \quad  \leq \sum_{k\in\zd} \left(\int_{Q_{0,k}} |f(x)|^p \dint x\right)^{\frac1p}  \left(\int_{Q_{0,k}} |\phi(x)|^{p'} \dint x\right)^{\frac{1}{p'}}  \\
    &\leq \varphi(1) \ \|f\mid L^p_\varphi(\rd)\| \|\phi\mid \mathcal{S}_M(\rd)\| \sum_{k\in\zd} (1+|k|)^{-\nd-M} \ \lesssim \ \| f \mid L^p_\varphi(\rd)\| \|\phi\mid \mathcal{S}_M(\rd)\|.
  \end{align*}

  Now we are ready to give the counterpart of \cite[Prop.~2.7, p.47]{ysy10}.

\begin{proposition}
    Let $\varphi\in\Gp$ and $\varphi$ satisfy \eqref{rintc} for $F$-spaces when $0<q<\infty$.
  \begin{enumerate}[\bfseries\upshape (i)]
        \item  Let $1\leq p< \infty$. Then
        \begin{align*}
            B_{p,1}^{0,\varphi}(\rd)\eb F_{p,1}^{0,\varphi}(\rd)\eb L_{\varphi}^p(\rd).
        \end{align*}
        \item  Let $0<p<1$. Then
        \begin{align*}
            B_{p,p}^{\sigma_p,\varphi}(\rd)=F_{p,p}^{\sigma_p,\varphi}(\rd)
            \eb \left(L_\varphi^1(\rd)\cap L_\varphi^p(\rd)\right).
        \end{align*}
    \end{enumerate}
\end{proposition}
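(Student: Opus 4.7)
The plan is to base both embeddings on the Littlewood--Paley reconstruction $f=\sum_{j=0}^\infty \mathcal{F}^{-1}(\theta_j\mathcal{F}f)$ and on the observation that in the definition of $L^p_\varphi(\rd)$ only cubes $P\in\mq$ with $|P|\geq 1$ appear, so that $j_P\vee 0=0$ and the summation in the $\Att$-(quasi-)norm starts from $j=0$. This matching of the truncation index is what lets the argument go through with no shift of parameters.

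For part (i), the first embedding $B^{0,\varphi}_{p,1}\hookrightarrow F^{0,\varphi}_{p,1}$ is immediate from Proposition~\ref{fdmt}(iii) since $p\geq 1$ gives $\min(p,1)=1$. For the second embedding I will fix $P\in\mq$ with $|P|\geq 1$, use $|f(x)|\leq \sum_{j=0}^\infty|\fuff(x)|$ pointwise, and apply the ordinary $L^p$-triangle inequality (available since $p\geq 1$) to get
\[
\frac{1}{\varphi(\ell(P))}\left(\int_P|f(x)|^p\dint x\right)^{1/p}\leq \frac{1}{\varphi(\ell(P))}\left(\int_P\left(\sum_{j=0}^\infty|\fuff(x)|\right)^p\dint x\right)^{1/p}\leq \|f\mid F^{0,\varphi}_{p,1}(\rd)\|.
\]
For part (ii), $B^{\sigma_p,\varphi}_{p,p}=F^{\sigma_p,\varphi}_{p,p}$ is Fubini, so only the common embedding into $L^p_\varphi\cap L^1_\varphi$ remains. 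The $L^p_\varphi$ part is the easy one: the $p$-triangle inequality gives $|f|^p\leq \sum_j|\fuff|^p$, and since $\sigma_p p=d(1-p)\geq 0$ one can insert $2^{j\sigma_p p}\geq 1$ for free to obtain $\int_P|f|^p\leq \sum_j 2^{j\sigma_p p}\int_P|\fuff|^p\leq \varphi(\ell(P))^p\|f\mid B^{\sigma_p,\varphi}_{p,p}\|^p$. The $L^1_\varphi$ embedding is subtler and calls for a local Nikol'skii step: for $|P|\geq 1$ and $j\geq 0$, I partition $P$ into dyadic sub-cubes $Q$ of side $2^{-j}$ and apply Lemma~\ref{lpp} with $p_1=p$, $p_2=1$ on each to get $\int_Q|\fuff|\lesssim 2^{j\sigma_p}(\int_Q|\fuff|^p)^{1/p}$; summation in $Q\subset P$ via the inequality $\sum_k a_k^{1/p}\leq (\sum_k a_k)^{1/p}$ (valid for $0<p\leq 1$, $a_k\geq 0$) upgrades this to $\int_P|\fuff|\lesssim 2^{j\sigma_p}(\int_P|\fuff|^p)^{1/p}$, after which a summation in $j$ using the trivial embedding $\ell^p\hookrightarrow\ell^1$ for $p\leq 1$ finishes the bound:
\[
\int_P|f|\dint x\leq \sum_{j=0}^\infty\int_P|\fuff|\dint x\lesssim\left(\sum_{j=0}^\infty 2^{j\sigma_p p}\int_P|\fuff|^p\dint x\right)^{1/p}\leq \varphi(\ell(P))\|f\mid B^{\sigma_p,\varphi}_{p,p}(\rd)\|.
\]

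The principal technical obstacle will be the rigorous justification of the pointwise a.e. reconstruction $f=\sum_j\fuff$ underlying the triangle-type inequalities, since a priori the Littlewood--Paley decomposition converges only in $\sdd$. I plan to handle this by showing that the partial sums $\sum_{j=0}^N\fuff$ are Cauchy in $L^p(P)$ (for (i)) or in $L^1(P)$ (for (ii)) on every $P\in\mq$ with $|P|\geq 1$---this is exactly the tail version of the estimates sketched above---so that they converge in $L^1_{\mathrm{loc}}$ to a locally integrable representative of $f$, after which uniqueness of the distributional limit identifies the two and dominated convergence produces the pointwise bounds used above. A secondary point is that Lemma~\ref{lpp} is applied to $\fuff$, whose Fourier transform sits in $\{|\xi|\lesssim 2^j\}$, which is exactly the standard scaling of the local Nikol'skii inequality from \cite[Sect.~1.3.2]{t83}.
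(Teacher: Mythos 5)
Your proof is correct, and for part (ii) it takes a genuinely different, more self-contained route than the paper. The paper derives the $L^1_\varphi$ embedding in (ii) by chaining through its Sobolev-type embedding, $F^{\sigma_p,\varphi}_{p,p}(\rd)\hookrightarrow F^{0,\varphi}_{1,1}(\rd)$ (Theorem~\ref{ste}(ii) with $p_1=p$, $p_2=1$, $s_1=\sigma_p$, $s_2=0$), and then applying part (i) with $p=1$ to land in $L^1_\varphi(\rd)$; the $L^p_\varphi$ half is then handled by the same triangle-inequality computation as in (i), now using the $p$-triangle inequality. You instead unwind the Sobolev step: you apply the local Nikol'skii inequality, Lemma~\ref{lpp}, directly on dyadic sub-cubes of side $2^{-j}$ inside $P$ to get $\int_P|\fuff|\lesssim 2^{j\sigma_p}(\int_P|\fuff|^p)^{1/p}$, then sum in $j$ via $\ell^p\hookrightarrow\ell^1$. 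This is precisely the mechanism underlying Theorem~\ref{ste}(i) in the paper, so the two proofs are morally equivalent, but yours is elementary and does not presuppose the Sobolev embedding; the paper's is shorter because it reuses earlier machinery. You also give a cleaner treatment of the pointwise/a.e.\ validity of $f=\sum_j\fuff$ via $L^p(P)$- resp.\ $L^1(P)$-Cauchy partial sums and uniqueness of distributional limits, a point the paper passes over by simply asserting convergence ``in the sense of $L^p(P)$''. One small remark: Lemma~\ref{lpp} as printed describes the Fourier support condition in a way that looks like a ball of radius $2^{-j}$, whereas what is needed (and what you correctly use) is Fourier support in a ball of radius $\sim 2^j$; this is a typo in the paper, and your reading is the intended one, matching how the lemma is used in the proof of Theorem~\ref{ste}.
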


\begin{proof}
    We first prove (i). Note that the left-hand embedding in (i) follows from Proposition~\ref{fdmt} (iii), so we only need to prove $F_{p,1}^{0,\varphi}(\rd)\eb L_{\varphi}^p(\rd)$.
    Let $\{\theta_j\}_{j\in\no}$ be the above dyadic resolution of unity. Then, for each dyadic cube $P$,
    \begin{align*}
        f=\sum_{j=0}^\infty\theta_j\ast f
    \end{align*}
    in the sense of $L^p(P)$. Thus, if $\ell(P)\geq 1$, we have
    \begin{align*}
        \frac{1}{\varphi(\ell(P))}\|f\mid L^p(P)\|
        \leq \frac{1}{\varphi(\ell(P))}\left[\int_P\left(\sum_{j=0}^\infty|\theta_j\ast f(x)|\right)^p
        \,\dint x\right]^\frac{1}{p}
        \leq\|f\mid F_{p,1}^{0,\varphi}(\rd)\|.
    \end{align*}
    Taking the supremum over all cubes $P$, the right-hand side of (i) is verified.

    Next we prove (ii). From Proposition~\ref{fdmt}, Theorem~\ref{ste} and the above proof of (i), we conclude that
    \begin{align*}
        F_{p,p}^{\sigma_p,\varphi}(\rd) \eb F_{p,1}^{0,\varphi}(\rd)   \eb L_\varphi^1(\rd).
    \end{align*}
    Thus, we obtain
    \begin{align*}
        f=\sum_{j=0}^\infty\theta_j\ast f \qquad \text{in}\quad L^1(P).
    \end{align*}
    H\"older's inequality yields that this identity takes place in $L^p(P)$ as well. Then, if $\ell(P)\geq 1$, we have
    \begin{align*}
        \frac{1}{\varphi(\ell(P))}\|f\mid L^p(P)\|
        &\leq \frac{1}{\varphi(\ell(P))}\left[\int_P\left(\sum_{j=0}^\infty|\theta_j\ast f(x)|\right)^p
        \,\dint x\right]^\frac{1}{p}\\
        &\leq \frac{1}{\varphi(\ell(P))}
        \left[\int_P\sum_{j=0}^\infty|\theta_j\ast f(x)|^p\,\dint x\right]^\frac{1}{p}\\
        &\leq\|f\mid F_{p,1}^{0,\varphi}(\rd)\|.
    \end{align*}
    Hence we finish the proof.
\end{proof}

\begin{proposition}
    Let $s\in\real$, $0<p_2\leq p_1<\infty$, $0<q\leq \infty$ and $\varphi_1\in\mathcal{G}_{p_1}$. Assume that $\varphi_1$ satisfies \eqref{rintc} when $0<q<\infty$ and $A_{p_1,q}^{s,\varphi_1}(\rd)=F_{p_1,q}^{s,\varphi_1}(\rd)$. Then
    \begin{align*}
        A_{p_1,q}^{s,\varphi_1}(\rd)\eb A_{p_2,q}^{s,\varphi_2}(\rd)
    \end{align*}
    with $\varphi_2(t)=\varphi_1(t)t^{d(\frac{1}{p_2}-\frac{1}{p_1})}$, $t>0$.
\end{proposition}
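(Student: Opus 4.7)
The plan is to reduce the embedding to a direct application of Hölder's inequality in $x$-space, after first checking that the target space is well-defined. The case $p_1=p_2$ is trivial (then $\varphi_2=\varphi_1$), so I would from the start assume $p_1>p_2$.

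As a preliminary step I would verify that $\varphi_2\in\mathcal{G}_{p_2}$ and that $\varphi_2$ inherits the $\varepsilon$-condition \eqref{rintc} from $\varphi_1$ (needed in the $F$-case). Since $\varphi_1$ is nondecreasing and the exponent $d(1/p_2-1/p_1)\geq 0$, the factor $t^{d(1/p_2-1/p_1)}$ is nondecreasing, hence so is $\varphi_2$. Moreover $t^{-d/p_2}\varphi_2(t)=t^{-d/p_1}\varphi_1(t)$ is nonincreasing by assumption on $\varphi_1$, so $\varphi_2\in\mathcal{G}_{p_2}$. The same identity shows $t^{\varepsilon-d/p_2}\varphi_2(t)=t^{\varepsilon-d/p_1}\varphi_1(t)$, so $\varphi_2$ satisfies \eqref{rintc} (with parameter $p_2$) as soon as $\varphi_1$ does (with parameter $p_1$).

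Next I would carry out the embedding itself. Fix a dyadic cube $P$. For any measurable $g$ on $P$, Hölder's inequality with exponents $p_1/p_2$ and its conjugate yields
\begin{align*}
\left(\int_P|g(x)|^{p_2}\,\dint x\right)^{1/p_2}\leq|P|^{1/p_2-1/p_1}\left(\int_P|g(x)|^{p_1}\,\dint x\right)^{1/p_1}=\ell(P)^{d/p_2-d/p_1}\left(\int_P|g(x)|^{p_1}\,\dint x\right)^{1/p_1}.
\end{align*}
The key algebraic observation is then
\begin{align*}
\frac{\ell(P)^{d/p_2-d/p_1}}{\varphi_2(\ell(P))}=\frac{1}{\varphi_1(\ell(P))},
\end{align*}
which is exactly the definition of $\varphi_2$. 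Applying this with $g(x)=\mathcal{F}^{-1}(\theta_j\mathcal{F}f)(x)$ inside the $B$-norm, and with $g(x)=\bigl(\sum_{j\geq\jjp}2^{jsq}|\mathcal{F}^{-1}(\theta_j\mathcal{F}f)(x)|^q\bigr)^{1/q}$ inside the $F$-norm, gives termwise
\begin{align*}
\frac{1}{\varphi_2(\ell(P))}\Bigl\|\,\cdot\,\mid L^{p_2}(P)\Bigr\|\leq\frac{1}{\varphi_1(\ell(P))}\Bigl\|\,\cdot\,\mid L^{p_1}(P)\Bigr\|.
\end{align*}
Since the starting index $\jjp$ of the inner sum depends only on $P$, the resulting right-hand sides are exactly the $P$-terms in the $A_{p_1,q}^{s,\varphi_1}$-norm of $f$. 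Taking the supremum over $P\in\mq$ finishes both cases simultaneously; the modification for $q=\infty$ is routine.

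There is no real obstacle: the substance is Hölder in $L^{p_2}(P)\hookrightarrow|P|^{1/p_2-1/p_1}L^{p_1}(P)$ combined with the designed choice of $\varphi_2$. The only thing demanding a touch of care is the preliminary check that $\varphi_2$ lives in the right class $\mathcal{G}_{p_2}$ and satisfies \eqref{rintc} in the $F$-case, so that $A_{p_2,q}^{s,\varphi_2}(\rd)$ is covered by Definition~\ref{d3}; this is what I would state explicitly as a small lemma at the beginning of the proof.
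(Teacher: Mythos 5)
Your proof is correct and takes essentially the same route as the paper: the paper simply asserts that $\varphi_2\in\mathcal{G}_{p_2}$, that \eqref{rintc} persists, and that H\"older's inequality gives the embedding, while you have supplied the (correct) details of each step. In particular, your identities $t^{-d/p_2}\varphi_2(t)=t^{-d/p_1}\varphi_1(t)$ and $\ell(P)^{d/p_2-d/p_1}/\varphi_2(\ell(P))=1/\varphi_1(\ell(P))$ are exactly what make the terse argument work.
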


\begin{proof}
    It is easy to see that $\varphi_2$ belongs to $\mathcal{G}_{p_2}$ and \eqref{rintc} holds as well. Then H\"older's inequality yields the desired results.
\end{proof}

\section{Atomic decomposition}\label{sec-atoms}

The final section gives the atomic decomposition of $B_{p,q}^{s,\varphi}(\rd)$ and $F_{p,q}^{s,\varphi}(\rd)$. We start with the corresponding sequence spaces of these spaces.

\begin{definition}
    Let $s\in\real$, $0<p<\infty$, $0<q\leq\infty$ and $\varphi\in\Gp$.
       \begin{enumerate}[\bfseries\upshape  (i)]
    \item  The sequence space $\btts(\rd)$ is defined to be the set of all sequences $\lz:=\{\lzjm\}_{j\in\no,m\in\zd}$ such that
        \begin{align*}
            \|\lz\mid\btts(\rd)\|:=
            \sup_{P\in\mq}\frac{1}{\varphi(\ell(P))}
            \left\{\sum_{j=\jjp}^\infty \left[\sum_{\substack{m\in\zd\\\qjm\subset P}} (2^{j (s-\frac{d}{p})}
            |\lzjm|)^p\right]^\frac{q}{p}\right\}^\frac{1}{q}
        \end{align*}
        is finite (with the usual modification for $q=\infty$).
        \item  Assume in addition that $\varphi$ satisfies \eqref{rintc} when $0<q<\infty$. The sequence space $\ftts(\rd)$ is defined to be the set of all sequences $\lz:=\{\lzjm\}_{j\in\no,m\in\zd}$ such that
            \begin{align*}
            \|\lz\mid\ftts(\rd)\|:=\sup_{P\in\mq}\frac{1}{\varphi(\ell(P))}
            \left\{\int_P\left[\sum_{j=\jjp}^\infty\sum_{\substack{m\in\zd\\\qjm\subset P}}
            (2^{j s}|\lzjm|\kjm(x))^q\right]^\frac{p}{q}\,\mathrm{d}x\right\}^\frac{1}{p}
        \end{align*}
        is finite (with the usual modification for $q=\infty$).
        \item The sequence space $a_{p,q}^{s,\varphi}(\rd)$ denotes either $\btts(\rd)$ or $\ftts(\rd)$. Assume in addition that $\varphi$ satisfies \eqref{rintc} when $0<q<\infty$ and $a_{p,q}^{s,\varphi}(\rd)$ denotes $f_{p,q}^{s,\varphi}(\rd)$.
    \end{enumerate}
\end{definition}

\begin{remark}
\begin{enumerate}[\bfseries\upshape  (i)]
        \item  In the present paper, the $L^2$-normalized indicator $|\qjm|^{-1/2}\kjm$ is not considered for the time being.
        \item  If $\varphi(t):=t^{d\tau}$, $\tau\geq 0$, then the sequence spaces $\btts(\rd)$ and $\ftts(\rd)$ become, respectively, the spaces $b_{p,q}^{s,\tau}(\rd)$ and $f_{p,q}^{s,\tau}(\rd)$ in \cite[Definition 2.2]{ysy10}.
    \end{enumerate}
\end{remark}

Next we introduce the atoms.

\begin{definition}
    Let $c>1$, $L\in\no\cup\{-1\}$ and $K\in\no$. A $C^K$-function $\ajm:\rd\rightarrow\mathbb{C}$ is said to be a $(K,L,c)$-atom supported near $\qjm$ with $j\in\no$ and $m\in\zd$, if
    \begin{align}
     2^{-j |\alpha|}|\Dd^\alpha \ajm(x)|\leq \chi_{c \qjm}(x)  \label{ac2}
    \end{align}
    for all $x\in\rd$ and for all $\alpha\in\no^d$ with $|\alpha|\leq K$ and
    \begin{align}\label{ac3}
        \int_{\rd}x^\beta \ajm(x)\,\dint  x=0
    \end{align}
    for all $\beta\in\no^d$ with $|\beta|\leq L$ when $L\geq 0$ and $j\in\nat$.
\end{definition}

\begin{remark}
   \begin{enumerate}[\bfseries\upshape  (i)]
        \item  If $L=-1$, then \eqref{ac3} means that there are no vanishing moment conditions required.
        \item  Similar to \cite[p.59]{ysy10}, an atom for $\Att(\rd)$ supported near $Q_{j,m}$ has vanishing moment conditions only when $\ell(\qjm)<1$, that is, for $j\in\nat$.
    \end{enumerate}
\end{remark}

We now state our main theorem in this section.

\begin{theorem}\label{adbf}
    Let $s\in\real$, $0<p<\infty$, $0<q\leq\infty$ and $\varphi\in\Gp$. Let also $c>1$, $L\in\no\cup\{-1\}$ and $K\in\no$. Assume that
    \begin{align*}
\begin{cases}
    K\geq \whole{1+s}_+,\quad L\geq \max(-1,\whole{\sigma_p-s}),&\qquad\text{if}\quad A_{p,q}^{s,\varphi}(\mathbb{R}^d)=B_{p,q}^{s,\varphi}(\mathbb{R}^d);         \\
    K\geq \whole{1+s}_+,\quad L\geq \max(-1,\whole{\sigma_{p,q}-s}),&\qquad\text{if}\quad A_{p,q}^{s,\varphi}(\mathbb{R}^d)=F_{p,q}^{s,\varphi}(\mathbb{R}^d).
\end{cases}
    \end{align*}
    Assume in addition that $\varphi$ satisfies \eqref{rintc} when $0<q<\infty$ and $\Att(\rd)=\ftt(\rd)$, $\att(\rd)=\ftts(\rd)$.
 \begin{enumerate}[\bfseries\upshape  (i)]
        \item  Let $f\in\Att(\rd)$. Then there exist a family $\{a_{Q_{j,m}}\}_{j\in\no,m\in\zd}$ of $(K,L,c)$-atoms and a sequence $\lz=\{\lzjm\}_{j\in\no,m\in\zd}\in\att(\rd)$ such that
        \begin{align}\label{afc1}
            f=\sum_{j=0}^\infty\sum_{m\in\zd}\lzjm \ajm\quad\text{in}\quad\sdd
        \end{align}
        and
        \begin{align}
            \|\lz\mid\att(\rd)\|\lesssim\|f\mid\Att(\rd)\|.
        \end{align}
        \item  Let $\{\ajm\}_{j\in\no,m\in\zd}$ be a family of $(K,L,c)$-atoms and
        $\lz=\{\lzjm\}_{j\in\no,m\in\zd}\in\att(\rd)$. Then
        \begin{align*}
            f=\sum_{j=0}^\infty\sum_{m\in\zd}\lzjm \ajm
        \end{align*}
        converges in $\sdd$ and belongs to $\Att(\rd)$. Furthermore,
        \begin{align}
            \|f\mid\Att(\rd)\|\lesssim\|\lz\mid\att(\rd)\|.
        \end{align}
    \end{enumerate}
\end{theorem}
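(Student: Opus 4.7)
The plan is to follow the classical Frazier--Jawerth scheme, adapted along the lines of \cite[Theorem 3.3]{ysy10} to the $\tau$-setting and then extended here to the class $\Gp$, relying heavily on Proposition~\ref{ggl} and Theorem~\ref{prt1} which were set up for exactly this purpose. I will prove part (ii) first, then part (i).

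For part (ii) (synthesis), the first step is to establish the key pointwise estimate: for every $\nu\in\no$, $j\in\no$, $m\in\zd$ and $x\in\rd$,
\begin{align*}
2^{\nu s}\bigl|\mathcal{F}^{-1}(\theta_\nu\mathcal{F}a_{Q_{j,m}})(x)\bigr|\lesssim 2^{-|j-\nu|\gamma}\,2^{j(s-d/p)}\cdot 2^{jd/p}\,\eta_{j,m}(x),
\end{align*}
where $\eta_{j,m}$ is a rapidly decreasing bump concentrated on $cQ_{j,m}$ and $\gamma>0$ is controlled by $K+1-s$ when $\nu\geq j$ (use the smoothness condition \eqref{ac2} and integration by parts against $\mathcal{F}^{-1}\theta_\nu$) and by $L+1+d-\sigma_p+s$ when $\nu<j$ (use the moment condition \eqref{ac3} and Taylor expansion of $\mathcal{F}^{-1}\theta_\nu$ around the cube center); the stated bounds on $K,L$ ensure $\gamma>0$ strictly, with $\sigma_p$ replaced by $\sigma_{p,q}$ in the $F$-case to account for the quasi-Banach loss. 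Summing in $m$ with fixed $j$ gives
\begin{align*}
2^{\nu s}\Bigl|\mathcal{F}^{-1}\Bigl(\theta_\nu\mathcal{F}\sum_{m\in\zd}\lambda_{j,m}a_{Q_{j,m}}\Bigr)(x)\Bigr|\lesssim 2^{-|j-\nu|\gamma}\sum_{m\in\zd}2^{js}|\lambda_{j,m}|\,\chi_{Q_{j,m}}^{(*)}(x),
\end{align*}
where the maximal-type indicator $\chi^{(*)}$ can be absorbed via Theorem~\ref{mhlp}/Theorem~\ref{assth1}. Summing in $j$, Proposition~\ref{ggl} collapses the $2^{-|j-\nu|\gamma}$ factor in both $\lql$ and $\lpl$ (the $\varepsilon$-assumption \eqref{rintc} in the $F$-case is exactly what is needed to apply Proposition~\ref{ggl}(ii)). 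Convergence in $\sdd$ follows from these same pointwise estimates pairing with Schwartz functions, using the argument in the paragraph before Proposition~\ref{tt5} applied componentwise.

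For part (i) (analysis), I will use a Calder\'on reproducing formula. Choose $\Phi,\Psi\in\sd$ with $\mathrm{supp}\,\mathcal{F}\Psi$ contained in an annulus, $\Psi$ having moments vanishing up to order $L$ in the required range, and such that $\Phi+\sum_{j\geq 1}\Psi_j=\delta_0$ in $\sd'$ where $\Psi_j(x)=2^{jd}\Psi(2^j x)$. Writing $f=\Phi*f+\sum_{j\geq 1}\Psi_j*\widetilde{\Psi}_j*f$ and decomposing each $\widetilde{\Psi}_j*f$ over dyadic cubes $Q_{j,m}$ of side $2^{-j}$, one sets (up to normalization)
\begin{align*}
\lambda_{j,m}:=C\cdot 2^{jd/p}\,\varphi_{j,m}^{(*)}(x_{j,m})\quad\text{and}\quad a_{Q_{j,m}}(x):=\lambda_{j,m}^{-1}\,\kjm(\cdot)(\Psi_j*\widetilde{\Psi}_j*f)(x),
\end{align*}
with the usual modifications for $j=0$, where $\varphi_{j,m}^{(*)}$ is dominated by the Peetre maximal function $(\theta_j^{*}f)(y)$ for $y\in Q_{j,m}$. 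The atom conditions \eqref{ac2} and \eqref{ac3} follow from the size, smoothness, and moment properties of $\Psi$. The sequence-norm estimate $\|\lambda\mid a_{p,q}^{s,\varphi}(\rd)\|\lesssim\|f\mid A_{p,q}^{s,\varphi}(\rd)\|$ then reduces, after replacing $|\lambda_{j,m}|\kjm(x)\lesssim(\theta_j^{*}f)(x)$, to the maximal inequality of Theorem~\ref{prt1}.

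The main obstacle lies in the cube-dependent starting index $j_P\vee 0$ in the sequence norm: the atoms indexed by cubes $Q_{j,m}\not\subset P$ must be shown not to contaminate the $P$-localized sum. The standard trick is to split the atomic sum into cubes $Q_{j,m}\subset P$ and the remaining ones, estimate the second piece using the tail decay of $\eta_{j,m}$ outside $cQ_{j,m}$ together with the $\Gp$ growth \eqref{gpi} (plus \eqref{rintc} for the $F$-case), and show it is absorbed by the supremum over strictly larger dyadic cubes, as was done for the maximal operator in the proof of Theorem~\ref{mhlp}. This, together with the careful bookkeeping of $K$ and $L$ at the $\sigma_p$/$\sigma_{p,q}$ threshold, is where most of the work lies; the remainder of the argument is routine once Proposition~\ref{ggl} and Theorem~\ref{prt1} are invoked.
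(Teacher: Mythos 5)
Your overall strategy is the right one and matches the paper's Frazier--Jawerth scheme at a high level, but there are two genuine gaps and one noteworthy deviation.

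\textbf{Part (ii).} Your pointwise estimate (essentially Lemma~\ref{fmd}), followed by Proposition~\ref{ggl} and the maximal inequalities from Theorem~\ref{mhlp}, is exactly what the paper does in Step~2.1. However, this argument is only valid under the additional assumption that $\lzjm=0$ for $j$ large (so that the sum trivially converges in $\sdd$). The removal of this assumption is not routine. The paper's Step~2.2 handles it by choosing $\rho>0$ small enough so that the hypotheses on $K,L$ remain valid for $A^{s-\rho,\varphi}_{p,q}(\rd)$, obtaining $\|f_j\mid A^{s-\rho,\varphi}_{p,q}(\rd)\|\lesssim 2^{-\rho j}\|\lz\mid\att(\rd)\|$, concluding convergence of $\sum f_j$ in $A^{s-\rho,\varphi}_{p,q}(\rd)\hookrightarrow\sdd$, and then invoking the Fatou property to lift the bound back to $\Att(\rd)$. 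Your appeal to ``pairing with Schwartz functions applied componentwise'' is too vague to replace this; you need the $\rho$-shift or an equivalent argument to show the partial sums form a Cauchy sequence in $\sdd$.

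\textbf{Part (i).} Here you deviate from the paper, normalizing by the Peetre maximal function and invoking Theorem~\ref{prt1}, instead of the paper's $\lz^\ast_{\min(p,q),\delta}$-machinery (Lemma~\ref{el2}). In principle this normalization can work (the Peetre maximal function does absorb the off-diagonal tail that the $\lz^\ast$-weights capture), but your construction of the atoms themselves is defective: if $\mathcal{F}\Psi$ is supported in an annulus then $\Psi$ is a genuine Schwartz function without compact support, and so the ``atom'' $\int_{Q_{j,m}}\Psi_j(\cdot-y)(\widetilde{\Psi}_j\ast f)(y)\,\dint y$ is not supported in $cQ_{j,m}$, violating \eqref{ac2}. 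Cutting by the indicator $\kjm$ instead destroys $C^K$-smoothness. The paper gets around this precisely by choosing $\vartheta$ with $\supp\mathcal{F}^{-1}\vartheta\subseteq\{|x|\leq1\}$, writing $\mathcal{F}^{-1}\upsilon=\mathcal{F}^{-1}\vartheta\ast\mathcal{F}^{-1}\eta=\sum_k g_k$ with each $g_k$ compactly supported near $Q_{0,k}$, and building atoms from the $g_k$; this $g_k$-decomposition is the essential mechanism you omit. A smaller slip: your normalization $\lzjm\sim 2^{jd/p}\varphi_{j,m}^{(*)}(x_{j,m})$ has an extraneous factor $2^{jd/p}$; comparing with the weight $2^{j(s-d/p)}$ already built into the sequence norm and the factor $|Q_{j,m}|\sim 2^{-jd}$ in the Riemann-sum approximation, the correct scaling is $\lzjm\sim\varphi_{j,m}^{(*)}(x_{j,m})$.

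\textbf{On your ``main obstacle'' paragraph.} You identify the cube-dependent truncation $j_P\vee 0$ as the chief difficulty, and propose to split the atomic sum into cubes inside and outside $P$. For part (i) this is not where the difficulty lies --- the $\att$-norm already restricts to $\qjm\subset P$, and once you replace $|\lzjm|\kjm$ by the Peetre maximal function, the comparison is local. For part (ii) the localization is already handled once-and-for-all inside the proofs of Theorem~\ref{mhlp} (via Lemma~\ref{usel1}), so it does not reappear in the atomic synthesis. The actual workhorse the paper needs, and which your proposal lacks or glosses over, is the compact-support mechanism for atoms (the $g_k$-decomposition) in part (i) and the $s\to s-\rho$ convergence argument in part (ii).
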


Before we prove our main theorem, we need some technical lemmas.
First of all, we recall the Calder\'on reproducing formula introduced in \cite{fj90}.
Note that the dyadic resolution of unity $\theta=\{\theta_j\}_{j\in\no}$ introduced in Section 2 (cf. \eqref{3e1} and \eqref{3e2}) is equivalent to $\mathcal{F}\varphi_\nu$ and $\mathcal{F}\psi_\nu$ in \cite[p.45]{fj90}.
Let $\theta$ satisfies \eqref{3e1} and \eqref{3e2}. By \cite[pp.130-131]{fj90}, there exists a function $\upsilon\in\sd$ which satisfies \eqref{3e1} and \eqref{3e2} such that for all $\xi\in\rd$,
\begin{align}\label{crf1}
    \sum_{j=0}^\infty\widetilde{\theta}(2^{-j}\xi)\upsilon(2^{-j}\xi)=1,
\end{align}
where $\widetilde{\theta}(x):=\overline{\theta(-x)}$, $x\in\rd$. Now we have the following Calder\'on reproducing formula.
\begin{lemma}{\rm{\cite[(12.4)]{fj90}}}\label{crf}
    Let $\theta,\,\upsilon\in\sd$ satisfy \eqref{3e1} and \eqref{3e2} such that \eqref{crf1} holds. Then for all $f\in\sdd$,
    \begin{align}\label{crf2}
        f&=\sum_{j=0}^\infty 2^{-j d}\sum_{m\in\zd}
        \mathcal{F}^{-1}\widetilde{\theta_j}\ast f(2^{-j}m)\mathcal{F}^{-1}\upsilon_j(\cdot-2^{-j}m)\notag\\
        &=\sum_{j=0}^\infty\sum_{m\in\zd}\langle f,\,\mathcal{F}^{-1}\theta_{\qjm}\rangle \mathcal{F}^{-1}\upsilon_{\qjm}
    \end{align}
    in $\sdd$.
\end{lemma}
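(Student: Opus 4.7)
The plan is to prove the reproducing formula by combining two classical ingredients: the frequency-side partition of unity \eqref{crf1}, which recovers $f$ from its Littlewood-Paley pieces, and a Shannon-type quadrature identity, which discretizes each piece on the sampling lattice $2^{-j}\zd$. The first equality of \eqref{crf2} carries the analytic content; the second is only a change of notation.

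First, multiplying \eqref{crf1} by $\mathcal{F}f$ and applying $\mathcal{F}^{-1}$ gives
\[
f \;=\; \sum_{j=0}^\infty \mathcal{F}^{-1}(\widetilde{\theta}_j\upsilon_j\mathcal{F}f) \;=\; \sum_{j=0}^\infty (\mathcal{F}^{-1}\widetilde{\theta}_j)\ast(\mathcal{F}^{-1}\upsilon_j)\ast f
\]
in $\sdd$, up to the convolution constants fixed by the Fourier convention of Section~\ref{sec-prelim}. The $\sdd$-convergence is the standard Littlewood-Paley argument: by \eqref{3e1}-\eqref{3e2} each $\widetilde{\theta}_j\upsilon_j$ is a Schwartz multiplier supported in a dyadic shell of scale $2^j$, so for large $N$ the partial sum $\sum_{j=0}^N\widetilde{\theta}_j\upsilon_j$ already equals $1$ on the ball $\{|\xi|\le 2^{N-1}\}$.

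The heart of the argument is the sampling identity, for fixed $j\in\no$ and $x\in\rd$:
\[
(F_j\ast G_j)(x) \;=\; 2^{-jd}\sum_{m\in\zd}F_j(2^{-j}m)\,G_j(x-2^{-j}m),
\]
where $F_j:=\mathcal{F}^{-1}\widetilde{\theta}_j\ast f$ and $G_j:=\mathcal{F}^{-1}\upsilon_j$. By Paley-Wiener, $F_j\in C^\infty(\rd)$ has polynomial growth, and both $\mathcal{F}F_j=\widetilde{\theta}_j\mathcal{F}f$ and $\mathcal{F}G_j$ are supported in $\{|\xi|\le\tfrac{3}{2}\,2^j\}$. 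I would apply Poisson summation to $h_x(y):=F_j(y)G_j(x-y)$: its Fourier transform equals, up to a universal constant, the convolution $\mathcal{F}F_j\ast \mathcal{F}\{G_j(x-\cdot)\}$ and is therefore supported in $\{|\xi|\le 3\cdot 2^j\}\subsetneq\{|\xi|<2\pi\cdot 2^j\}$, so it vanishes at every non-zero point of the dual lattice $2\pi\cdot 2^j\zd$. Only the $n=0$ contribution survives in the Poisson identity, and Fourier inversion identifies it with $(F_j\ast G_j)(x)$ after absorbing the $(2\pi)^d$-factor. Summing over $j$ and inserting into the Littlewood-Paley expansion of the first paragraph gives the first equality of \eqref{crf2}. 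The second equality is just unpacking the Frazier-Jawerth normalizations $\mathcal{F}^{-1}\theta_{\qjm}(x)=2^{-jd/2}\mathcal{F}^{-1}\theta_j(x-2^{-j}m)$ and $\mathcal{F}^{-1}\upsilon_{\qjm}(x)=2^{-jd/2}\mathcal{F}^{-1}\upsilon_j(x-2^{-j}m)$, which give $\langle f,\mathcal{F}^{-1}\theta_{\qjm}\rangle=2^{-jd/2}F_j(2^{-j}m)$; the two $2^{-jd/2}$ factors combine into the $2^{-jd}$ prefactor in line one.

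The main obstacle I expect is the $\sdd$-convergence of the non-absolutely convergent double sum, not the algebra. Testing \eqref{crf2} against $\phi\in\sd$ reduces to showing that
\[
\sum_{j=0}^\infty 2^{-jd}\sum_{m\in\zd}F_j(2^{-j}m)\int_{\rd}G_j(x-2^{-j}m)\phi(x)\,\dint x
\]
converges absolutely and equals $\langle f,\phi\rangle$. For this I would use the polynomial-growth bound $|F_j(2^{-j}m)|\lesssim 2^{jN}(1+|m|)^M$, where $M,N$ are controlled by suitable $\sdd$-seminorms of $f$, together with the rapid decay of the Schwartz function $G_j\ast\phi(-\cdot)$ to sum in $m$, and fall back on the Littlewood-Paley step of the first paragraph to sum in $j$. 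The interchange of Poisson summation with integration against $\phi$ is then a routine dominated-convergence application.
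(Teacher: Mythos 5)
The paper does not prove this lemma at all: it is quoted directly from Frazier--Jawerth \cite[(12.4)]{fj90}, so there is no in-paper argument to compare against. Your proposal is, in essence, a correct reconstruction of the Frazier--Jawerth $\varphi$-transform argument: the Littlewood--Paley step $f=\sum_j\mathcal{F}^{-1}(\widetilde{\theta}_j\upsilon_j\mathcal{F}f)$ from \eqref{crf1} is standard, and your sampling identity for $F_j\ast G_j$ via Poisson summation on the lattice $2^{-j}\zd$ is exactly equivalent to the periodization/Fourier-series expansion of $\widetilde{\theta}_j\mathcal{F}f$ on the cube of side $2\pi\cdot2^j$ used in \cite{fj90}; the support count $3\cdot 2^j<2\pi\cdot 2^j$ is the right reason only the zero dual-lattice point survives, and $h_x\in\sd$ (product of the $\mathcal{O}_M$-function $F_j$, via Paley--Wiener--Schwartz, with a Schwartz factor) legitimizes the classical Poisson formula. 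Two small points deserve care if you write this up. First, the convergence claim: joint absolute convergence in $(j,m)$ is not what your crude bound $|F_j(2^{-j}m)|\lesssim 2^{jN}(1+|2^{-j}m|)^{N}$ gives (it yields, for fixed $j$, an $m$-sum of size $O(2^{jN'})$); the clean statement is that for each $j$ the $m$-sum converges absolutely and equals $\langle\mathcal{F}^{-1}(\widetilde{\theta}_j\upsilon_j\mathcal{F}f),\phi\rangle$, and then the $j$-sum converges (in fact absolutely) because for $j\geq1$ the test function contributes only frequencies $|\xi|\geq 2^{j-1}$, where $\mathcal{F}\phi$ decays rapidly while the order-$N$ seminorms of $f$ cost only $2^{jN}$ --- which is precisely your ``fall back on the Littlewood--Paley step,'' so the structure is right but the word ``absolutely'' for the double sum should be either dropped or justified by a sharper bound on $\int G_j(x-2^{-j}m)\phi(x)\,\dint x$. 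Second, the normalization bookkeeping: the paper uses the symmetric $(2\pi)^{-d/2}$ Fourier convention while the constants in \eqref{crf2} are inherited from the Frazier--Jawerth convention, so the ``universal constants'' you absorb in the Poisson step and in $\mathcal{F}(u\ast v)=(2\pi)^{d/2}\mathcal{F}u\,\mathcal{F}v$ must be tracked once to confirm the prefactor is exactly $2^{-jd}$, and the identification $\langle f,\mathcal{F}^{-1}\theta_{\qjm}\rangle=2^{-jd/2}F_j(2^{-j}m)$ uses $\mathcal{F}^{-1}\widetilde{\theta}_j=\overline{\mathcal{F}^{-1}\theta_j}$ together with the (sesquilinear) pairing convention of \cite{fj90}. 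Neither point is a gap in the idea; both are routine.
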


Let $0<r\leq\infty$ and let $\delta>0$ be fixed. For a sequence $\lz:=\{\lzjm\}_{j\in\no,m\in\zd}$, set
\begin{align*}
    (\lz^\ast_{r,\delta})_{\qjm}:=\left(\sum_{R_{j,k}\in\mq,\,k\in\zd}   \frac{|\lz_{R_{j,k}}|^r}{(1+\ell(R_{j,k})^{-1}|x_{R_{j,k}}-x_{Q_{j,m}}|)^\delta}\right)^{\frac{1}{r}},\quad \qjm\in\mq,\,j\in\no,\,m\in\zd.
\end{align*}
and $\lz^\ast_{r,\delta}:=\{(\lz^\ast_{r,\delta})_{\qjm}\}_{j\in\no,m\in\zd}$.

\begin{lemma}\label{el2}
Let $s\in\real$, $0<p<\infty$, $0<q\leq\infty$, $\delta>d$ and $\varphi\in\Gp$. Assume in addition that $\varphi$ satisfies \eqref{rintc} when $0<q<\infty$ and $a_{p,q}^{s,\varphi}(\rd)$ denotes $f_{p,q}^{s,\varphi}(\rd)$. Then for all $\lz\in\att(\rd)$,
$$
\|\lz\mid\att(\rd)\|\leq\|\lz^\ast_{\min(p,q),\delta}\mid\att(\rd)\|\lesssim\|\lz\mid\att(\rd)\|.
$$
\end{lemma}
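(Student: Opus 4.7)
The first inequality is immediate: in the sum defining $(\lz^\ast_{r,\delta})_{\qjm}$, retaining only the single term with $R_{j,k}=\qjm$ yields $|\lzjm|\leq (\lz^\ast_{r,\delta})_{\qjm}$ pointwise in $(j,m)$, which upgrades to the norm inequality by monotonicity of the (quasi-)norms involved.

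For the second inequality, my plan begins with a reduction to a strictly smaller exponent to circumvent the endpoint limitations of Theorem~\ref{mhlp}. Since $\delta>d$, fix $r_0\in(d\min(p,q)/\delta,\min(p,q))$ and set $\delta_0:=\delta r_0/\min(p,q)$, so that $\delta_0>d$ and $u:=\min(p,q)/r_0>1$. Applying the elementary inequality $\sum_k a_k^u\leq(\sum_k a_k)^u$ for nonnegative $a_k$ and $u\geq 1$ (which follows from $a_k^{u-1}\leq(\sum_j a_j)^{u-1}$) with $a_k:=|\lz_{R_{j,k}}|^{r_0}/(1+\ell(R_{j,k})^{-1}|x_{R_{j,k}}-x_{\qjm}|)^{\delta_0}$ gives the pointwise bound $(\lz^\ast_{\min(p,q),\delta})_{\qjm}\leq(\lz^\ast_{r_0,\delta_0})_{\qjm}$. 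It therefore suffices to establish $\|\lz^\ast_{r_0,\delta_0}\mid a^{s,\varphi}_{p,q}(\rd)\|\lesssim\|\lz\mid a^{s,\varphi}_{p,q}(\rd)\|$ under the strict conditions $r_0<\min(p,q)$ and $\delta_0>d$.

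The main analytic step is the pointwise maximal estimate
\begin{align*}
(\lz^\ast_{r_0,\delta_0})_{\qjm}^{r_0}\lesssim \mhl(F_j)(x),\qquad x\in\qjm,
\end{align*}
where $F_j:=\sum_{k\in\zd}|\lz_{R_{j,k}}|^{r_0}\chi_{R_{j,k}}$. It follows from the equivalence $1+2^j|x-y|\sim 1+2^j|x_{\qjm}-x_{R_{j,k}}|$ for $x\in\qjm$ and $y\in R_{j,k}$, which allows one to rewrite the defining sum as a convolution, combined with the standard fact that for $\delta_0>d$ the integrable radial kernel $(1+2^j|\cdot|)^{-\delta_0}$ is pointwise dominated by a multiple of $\mhl$. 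Now $\varphi^{r_0}$ belongs to $\mathcal{G}_{p/r_0}$, and $\varphi^{r_0}$ inherits \eqref{rintc} from $\varphi$ (raising the inequality to the power $r_0$ gives the corresponding $\varepsilon$-assumption with exponent $r_0\varepsilon$). Since $p/r_0,q/r_0>1$, one can apply Theorem~\ref{mhlp}(i) in the Besov case, respectively (ii) in the Triebel-Lizorkin case, to the sequence $\{2^{jsr_0}F_j\}_{j\in\no}$ in $\ell^{q/r_0}(L^{p/r_0}_{\varphi^{r_0}}(\rd))$ or $L^{p/r_0}_{\varphi^{r_0}}(\ell^{q/r_0}(\rd))$.

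The proof concludes by identifying the resulting bound. Since the dyadic cubes $\{R_{j,k}\}_{k\in\zd}$ at scale $j$ partition $\rd$, we have $F_j(x)^{q/r_0}=\sum_m|\lz_{R_{j,m}}|^q\chi_{R_{j,m}}(x)$, and for $x\in P$ with $j\geq j_P\vee 0$ the unique cube $R_{j,m_x}$ containing $x$ satisfies $R_{j,m_x}\subset P$. A direct unpacking of the definitions then shows that $\|\{2^{jsr_0}F_j\}_{j\in\no}\mid L^{p/r_0}_{\varphi^{r_0}}(\ell^{q/r_0}(\rd))\|$ equals $\|\lz\mid f^{s,\varphi}_{p,q}(\rd)\|^{r_0}$, and analogously in the $B$-case. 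Taking $r_0$-th roots and combining with the pointwise bound and the initial reduction yields the claim. The main obstacle is precisely the endpoint $r=\min(p,q)$, which would force $p/r=1$ or $q/r=1$ and prevent direct use of Theorem~\ref{mhlp}; the matched pair $(r_0,\delta_0)$ constructed above is exactly what is needed to move the argument off this endpoint without losing the hypothesis $\delta>d$.
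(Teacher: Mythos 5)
Your proposal is correct, and it takes a genuinely different route from the paper's own proof.

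The paper (following \cite[Lemma~2.8]{ysy10}) fixes a dyadic cube $P$, splits $\lz=\omega+\mu$ according to whether $\qjm\subset 3P$ or not, invokes the classical Frazier--Jawerth sequence-space result \cite[Lemma~2.3]{fj90} for the ``near'' part $\omega$, and then performs an explicit tail estimate for the ``far'' part $\mu$ via the decay $(1+2^i|k|)^{-\delta}$, summing the geometric series in $i$ and $|k|$. In the $F$-case the tail estimate is further reduced to a Fefferman--Stein vector-valued maximal bound after an $L^a$-averaging trick. Your argument instead replaces $(\min(p,q),\delta)$ by a strictly subcritical pair $(r_0,\delta_0)$ with $r_0<\min(p,q)$ and $\delta_0>d$ (the choice $\delta_0=\delta r_0/\min(p,q)$ is the right normalization, and the numerical check that a valid $r_0$ exists because $\delta>d$ is exactly the point), passes to the pointwise majorant $(\lz^*_{r_0,\delta_0})_{\qjm}^{r_0}\lesssim\mhl(F_j)(x)$ for $x\in\qjm$ via the standard ``integrable radially decreasing kernel dominated by the maximal function'' convolution estimate, and then applies Theorem~\ref{mhlp} directly with the rescaled data $p/r_0$, $q/r_0$, $\varphi^{r_0}$. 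Your checks that $\varphi^{r_0}\in\mathcal{G}_{p/r_0}$ and that \eqref{rintc} is inherited are correct, as is the norm identification $\|\{2^{jsr_0}F_j\}\mid L^{p/r_0}_{\varphi^{r_0}}(\ell^{q/r_0})\|=\|\lz\mid\ftts(\rd)\|^{r_0}$ (and its $B$-analogue), which rests on the dyadic cubes at scale $j$ partitioning $\rd$ and $R_{j,m_x}\subset P$ for $x\in P$, $j\geq j_P\vee0$.

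What each approach buys: yours is more unified --- it handles $B$- and $F$-cases by the same mechanism, avoids the near/far decomposition entirely, avoids citing \cite[Lemma~2.3]{fj90} as a black box, and showcases Theorem~\ref{mhlp} as the single engine. The paper's split argument is more elementary in that the ``far'' calculation is completely explicit, and it does not require a majorant lemma in disguise; but it is longer and involves a bit of bookkeeping about dyadic cubes inside $3P$. Both arguments genuinely need $\delta>d$: in yours it is used to make the rescaled kernel $2^{jd}(1+2^j|\cdot|)^{-\delta_0}$ uniformly integrable and to make room for $r_0<\min(p,q)$; in the paper's it makes the series $\sum_{|k|\geq2}|k|^{-\delta}$ and $\sum_i 2^{-i(\delta-d)\cdot}$ converge. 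One small stylistic nit: the sentence ``the integrable radial kernel $(1+2^j|\cdot|)^{-\delta_0}$ is pointwise dominated by a multiple of $\mhl$'' should read that the \emph{convolution} of $F_j$ with the normalized kernel is pointwise dominated by $\mhl(F_j)$; you clearly mean this, but the wording as written is imprecise.
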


\begin{proof}
Using $\varphi(\ell(P))$ to replace $|P|^\tau$ in the proof of \cite[Lemma 2.8]{ysy10}, we obtain Lemma~\ref{el2}. For reader's convenience, we give the details.

Let $\lz\in\att(\rd)$. It is easily seen that $|\lzjm|\leq(\lz^\ast_{\min(p,q),\delta})_{\qjm}$ holds for all dyadic cubes $\qjm$ with $j\in\no,\,m\in\zd$. Thus we have $\|\lz\mid\att(\rd)\|\leq\|\lz^\ast_{\min(p,q),\delta}\mid\att(\rd)\|$.

Next we prove the converse. Let a dyadic cube $P$ be fixed. For all dyadic cubes $\qjm$ with $j\in\no,\,m\in\zd$,
let $\omega_{\qjm}:=\lzjm$ if $\qjm\subset 3P$ and $\omega_{\qjm}=0$ otherwise,
and let $\mu_{\qjm}:=\lz_{\qjm}-\omega_{\qjm}$. Set $\omega:=\{\omega_{\qjm}\}_{j\in\no,m\in\zd}$
and $\mu:=\{\mu_{\qjm}\}_{j\in\no,m\in\zd}$. Then for all such $\qjm$, we have
\begin{align}\label{eel2}
    (\lz^\ast_{\min(p,q),\delta})_{\qjm}^{\min(p,q)}=(\omega^\ast_{\min(p,q),\delta})_{\qjm}^{\min(p,q)}+
    (\mu^\ast_{\min(p,q),\delta})_{\qjm}^{\min(p,q)}.
\end{align}
Applying \cite[Lemma 2.3]{fj90}, we have
\begin{align*}
    \mathrm{I}_P&:=\frac{1}{\varphi(\ell(P))}
            \left\{\sum_{j=\jjp}^\infty\left[\sum_{\substack{m\in\zd\\\qjm\subset P}}
            (2^{j (s-d)}(\omega^\ast_{\min(p,q),\delta})_{\qjm})^p\right]^\frac{q}{p}\right\}^\frac{1}{q}\\
        &\leq \frac{1}{\varphi(\ell(P))}\left\|\omega^\ast_{\min(p,q),\delta}\mid b_{p,q}^s(\rd)\right\|\\
        &\lesssim \frac{1}{\varphi(\ell(P))}\|\omega\mid b_{p,q}^s(\rd)\|\\
        &\lesssim \|\lz\mid b_{p,q}^{s,\varphi}(\rd)\|,
\end{align*}
where $b_{p,q}^s(\rd)$ is the corresponding sequence space for the Besov space $B_{p,q}^s(\rd)$.
Similarly, we have
\begin{align*}
\widetilde{\mathrm{I}}_P:=\frac{1}{\varphi(\ell(P))}
            \left\{\int_P\left[\sum_{j=\jjp}^\infty\sum_{\substack{m\in\zd\\\qjm\subset P}}
            \left(2^{j s}(\omega^\ast_{\min(p,q),\delta})_{\qjm}\kjm(x)\right)^q\right]^\frac{p}{q}\,\dint x\right\}^\frac{1}{p}\lesssim\|\lz\mid f_{p,q}^{s,\varphi}(\rd)\|.
\end{align*}

On the other hand, let $\qjm\subset P$ be a dyadic cube with side length at most 1.
Then $\ell(\qjm)=2^{-i}\ell(P)$ for some nonnegative integer $i\geq\max(-j_P,0)=-\min(j_P,0)$.
Suppose $\widetilde{\qjm}$ is any dyadic cube with $\ell(\widetilde{\qjm})=\ell(\qjm)=2^{-i}\ell(P)$
and $\widetilde{\qjm}\subset P+k\ell(P)\nsubseteq 3P$ for some $k\in\zd$,
where $P+k\ell(P):=\{x+k\ell(P):x\in P\}$.
Then $|k|\geq 2$ and $1+\ell(\widetilde{\qjm})^{-1}|x_{\widetilde{\qjm}}-x_{\qjm}|\sim 2^i|k|$.
Note that
\begin{align*}
    &\sum_{\substack{\ell(\qjm)=2^{-i}\ell(P)\\\qjm\subset P}}
    [|\qjm|^{-\frac{s}{d}+\frac{1}{p}}(\mu_{\min(p,q),\delta}^\ast)_{\qjm}]^p\\
    &\qquad\sim\sum_{\substack{\ell(\qjm)=2^{-i}\ell(P)\\\qjm\subset P}}
    |\qjm|^{p(-\frac{s}{d}+\frac{1}{p})}\left(
    \sum_{\substack{k\in\zd\\|k|\geq2}}\sum_{\substack{\ell(\widetilde{\qjm})=2^{-i}\ell(P)\\\widetilde{\qjm}\subset P+k\ell(P)}}
    \frac{|\mu_{\widetilde{\qjm}}|^{\min(p,q)}}{(1+\ell(\widetilde{\qjm})^{-1}|x_{\widetilde{\qjm}}-x_{\qjm}|)^\delta}\right)^\frac{p}{\min(p,q)}\\
    &\qquad\sim \sum_{\substack{\ell(\qjm)=2^{-i}\ell(P)\\\qjm\subset P}}
    |\qjm|^{p(-\frac{s}{d}+\frac{1}{p})}\left(
    \sum_{\substack{k\in\zd\\|k|\geq2}}\sum_{\substack{\ell(\widetilde{\qjm})=2^{-i}\ell(P)\\\widetilde{\qjm}\subset P+k\ell(P)}}
    2^{-i \delta}|k|^{-\delta}|\lz_{\widetilde{\qjm}}|^{\min(p,q)}
    \right)^\frac{p}{\min(p,q)}\\
    &\qquad\sim 2^{id-i\delta\frac{p}{\min(p,q)}} \left(
    \sum_{\substack{k\in\zd\\|k|\geq2}}|k|^{-\delta}
    \sum_{\substack{\ell(\widetilde{\qjm})=2^{-i}\ell(P)\\\widetilde{\qjm}\subset P+k\ell(P)}}
    (|\widetilde{\qjm}|^{-\frac{s}{d}+\frac{1}{p}}|\lz_{\widetilde{\qjm}}|)^{\min(p,q)}
    \right)^\frac{p}{\min(p,q)}.
\end{align*}

Thus,
\begin{align*}
\begin{split}
    {\mathrm{J}_P}&:=\frac{1}{\varphi(\ell(P))}\left\{\sum_{i=-\min(j_P,0)}^\infty
    \left[\sum_{\substack{\ell(\qjm)=2^{-i}\ell(P)\\\qjm\subset P}}
    [|\qjm|^{-\frac{s}{d}+\frac{1}{p}}(\mu_{\min(p,q),\delta}^\ast)_{\qjm}]^p
    \right]^{\frac{q}{p}}\right\}^{\frac{1}{q}}\\
    &\lesssim\frac{1}{\varphi(\ell(P))}\left\{\sum_{i=-\min(j_P,0)}^\infty 2^{{id\frac{q}{p}}-i\delta\frac{q}{\min(p,q)}}\left[
    \sum_{\substack{k\in\zd\\|k|\geq2}}|k|^{-\delta}\right.\right.\\
    &\qquad\qquad\qquad\left.\left.\times\sum_{\substack{\ell(\widetilde{\qjm})=2^{-i}\ell(P)\\\widetilde{\qjm}\subset P+k\ell(P)}}(|\widetilde{\qjm}|^{-\frac{s}{d}+\frac{1}{p}}|\lz_{\widetilde{\qjm}}|)^{\min(p,q)}\right]^{\frac{q}{\min(p,q)}}\right\}^{\frac{1}{q}}.
\end{split}
\end{align*}
When $p\leq q$, by $\delta>d$, we have
\begin{align*}
    \mathrm{J}_P\lesssim\|\lz\mid b_{p,q}^{s,\varphi}(\rd)\|\left\{\sum_{i=-\min(j_P,0)}^\infty2^{-i\frac{q}{p}(\delta-d)}\left(\sum_{\substack{k\in\zd\\|k|\geq2}}|k|^{-\delta}\right)^{\frac{q}{p}}\right\}^{\frac{1}{q}}\lesssim\|\lz\mid b_{p,q}^{s,\varphi}(\rd)\|;
\end{align*}
when $p>q$, by H\"older's inequality and $\delta>d$, we have
\begin{align*}
    {\mathrm{J}_P}&\lesssim\frac{1}{\varphi(\ell(P))}\left\{
    \sum_{i=-\min(j_P,0)}^\infty 2^{id\frac{q}{p}-i\delta}\left[
    \sum_{\substack{k\in\zd\\|k|\geq2}}|k|^{-\delta}\sum_{\substack{\ell(\widetilde{\qjm})=2^{-i}\ell(P)\\\widetilde{\qjm}\subset P+k\ell(P)}}(|\widetilde{\qjm}|^{-\frac{s}{d}+\frac{1}{p}}|\lz_{\widetilde{\qjm}}|)^q
    \right]\right\}^{\frac{1}{q}}\\
    &\lesssim\frac{1}{\varphi(\ell(P))}\left\{
    \sum_{i=-\min(j_P,0)}^\infty 2^{id\frac{q}{p}-i\delta}
    \sum_{\substack{k\in\zd\\|k|\geq2}}|k|^{-\delta}\right.\\
    &\qquad\qquad\qquad\left.
    \times\left[\sum_{\substack{\ell(\widetilde{\qjm})=2^{-i}\ell(P)\\\widetilde{\qjm}\subset P+k\ell(P)}}
    (|\widetilde{\qjm}|^{-\frac{s}{d}+\frac{1}{p}}|\lz_{\widetilde{\qjm}}|)^p\right]^\frac{q}{p}
    \left(\sum_{\substack{\ell(\widetilde{\qjm})=2^{-i}\ell(P)\\\widetilde{\qjm}\subset P+k\ell(P)}}1\right)^{1-\frac{q}{p}}
    \right\}^{\frac{1}{q}}\\
    &\lesssim\|\lz\mid b_{p,q}^{s,\varphi}(\rd)\|\left\{\sum_{i=-\min(j_P,0)}^\infty2^{-i(\delta-d)}\left(\sum_{\substack{k\in\zd\\|k|\geq2}}|k|^{-\delta}\right)\right\}^{\frac{1}{q}}\\
    &\lesssim\|\lz\mid b_{p,q}^{s,\varphi}(\rd)\|.
\end{align*}
Therefore, by \eqref{eel2},
$$
\|\lz_{\min(p,q),\delta}^\ast\mid b_{p,q}^{s,\varphi}(\rd)\|\lesssim\sup_{P\in\mq}(\mathrm{I}_P+\mathrm{J}_P)\lesssim\|\lz\mid b_{p,q}^{s,\varphi}(\rd)\|.
$$

Finally, we show that $\widetilde{\mathrm{J}}_P\lesssim\|\lz\mid f_{p,q}^{s,\varphi}(\rd)\|$, where $$\widetilde{\mathrm{J}}_P:=\frac{1}{\varphi(\ell(P))}\left\{\int_P\left[\sum_{j=\jjp}^\infty
\sum_{\substack{m\in\zd\\\qjm\subset P}}(2^{j s}|(\mu^\ast_{\min(p,q),\delta})_{\qjm}|\chi_{\qjm}(x))^q\right]^{\frac{p}{q}}\,\dint x\right\}^{\frac{1}{p}}.$$
For any $i\in\no$, $k\in\no^d$ and dyadic cube $P$, let
\begin{align*}
    A(i,k,P):=\{\widetilde{\qjm}\in\mq:\,\ell(\widetilde{\qjm})=2^{-i}\ell(P),\,\widetilde{\qjm}\subset P+k\ell(P)\nsubseteq3P\}.
\end{align*}
Let $a:=\frac{2d}{d+\delta}\min(p,q)$. Then $0<a<\min(p,q)$. By monotonicity of $\ell^q$-norm in $q$ and $1+\ell(\widetilde{\qjm})^{-1}|x_{\qjm}-x_{\widetilde{\qjm}}|\sim 2^i|k|$ for any $\qjm\subset P$ and $\widetilde{\qjm}\in A(i,k,P)$, we obtain that for all $x\in P$,
\begin{align*}
    &\sum_{\widetilde{\qjm}\in A(i,k,P)}\frac{(|\widetilde{\qjm}|^{-\frac{s}{d}}|\lz_{\widetilde{\qjm}}|)^{\min(p,q)}}{(1+(\ell(\widetilde{\qjm}))^{-1}
    |x_Q-x_{\widetilde{\qjm}}|)^\delta}\\
    &\qquad\lesssim2^{-i\delta}|k|^{-\delta}\sum_{\widetilde{\qjm}\in A(i,k,P)}(|\widetilde{\qjm}|^{-\frac{s}{d}}|\lz_{\widetilde{\qjm}}|)^{\min(p,q)}\\
    &\qquad\lesssim2^{-i\delta}|k|^{-\delta}\left(\sum_{\widetilde{\qjm}\in A(i,k,P)}(|\widetilde{\qjm}|^{-\frac{s}{d}}|\lz_{\widetilde{\qjm}}|)^a\right)^{\frac{\min(p,q)}{a}}\\
    &\qquad\lesssim2^{-i\delta}2^{id\frac{\min(p,q)}{a}}|k|^{-\delta}\left(\int_{\widetilde{\qjm}}\sum_{\widetilde{\qjm}\in A(i,k,P)}(|\widetilde{\qjm}|^{-\frac{s}{d}}|\lz_{\widetilde{\qjm}}|)^a\chi_{\widetilde{\qjm}}(x+k\ell(P))\dint x\right)^{\frac{\min(p,q)}{a}}\\
    &\qquad\lesssim2^{-i\delta+id\frac{\min(p,q)}{a}}|k|^{-\delta}\left[\mathcal{M}_{\mathrm{HL}}\left(
    \sum_{\substack{\ell(\widetilde{\qjm})=2^{-i}\ell(P)\\\widetilde{\qjm}\subset P+k\ell(P)}}(|\widetilde{\qjm}|^{-\frac{s}{d}}|\lz_{\widetilde{\qjm}}|\chi_{\widetilde{\qjm}})^a
    \right)(x+k\ell(P))\right]^{\frac{\min(p,q)}{a}}.
\end{align*}
Then by Minkowski's inequality, Fefferman-Stein's vector-valued inequality and H\"older's inequality, we have
\begin{align*}
    {\widetilde{\mathrm{J}}_P}&\lesssim\frac{1}{\varphi(\ell(P))}\left\{\int_P\left[
    \sum_{i=-\min(j_P,0)}^\infty\left(
    \sum_{\substack{k\in\zd\\|k|\geq2}}2^{-i\delta+id\frac{\min(p,q)}{a}}|k|^{-\delta}\right.\right.\right.\\
    &\,\,\,\,\,\,\,\,\,\left.\left.\left.\times\left[\mathcal{M}_{\mathrm{HL}}\left(
    \sum_{\substack{\ell(\widetilde{\qjm})=2^{-i}\ell(P)\\\widetilde{\qjm}\subset P+k\ell(P)}}(|\widetilde{\qjm}|^{-\frac{s}{d}}|\lz_{\widetilde{\qjm}}|\chi_{\widetilde{\qjm}})^a
    \right)(x+k\ell(P))\right]^{\frac{\min(p,q)}{a}}
    \right)^{\frac{q}{\min(p,q)}}
    \right]^{\frac{p}{q}}
    \,\dint x\right\}^{\frac{1}{p}}\\
    &\lesssim\|\lz\mid f_{p,q}^{s,\varphi}(\rd)\|.
\end{align*}
Therefore, by \eqref{eel2} again,
$$
\|\lz_{\min(p,q),\delta}^\ast\mid f_{p,q}^{s,\varphi}(\rd)\|\lesssim\sup_{P\in\mq}(\mathrm{\widetilde{I}}_P+\mathrm{\widetilde{J}}_P)\lesssim\|\lz\mid f_{p,q}^{s,\varphi}(\rd)\|,
$$
which completes the proof of Lemma~\ref{el2}.
\end{proof}

\begin{lemma}{\rm{(\cite[Corollary 4.6]{nns16})}}\label{fmd}
    Let $M>0$ be arbitrary. Let $K\in\no$ and $L\in\no\cup\{-1\}$. Let an atom $a_{\qjm}$ be given, supported near
    $\qjm$ with $j\in\no,\,m\in\zd$. Assume that $\nu\in\nat$, $j\in\no$ and $m\in\zd$. Then for $x\in\rd$,
    \begin{align*}
        |\mathcal{F}^{-1}(\theta_\nu\mathcal{F}a_{\qjm})(x)|\lesssim
        \begin{cases}
            2^{-(\nu-j)K}\mhl^{(\frac{d}{M})}(\kjm)(x),&\qquad\nu\geq j,\\[1ex]
            2^{(\nu-j)(L+1+d-M)}\mhl^{(\frac{d}{M})}(\kjm)(x),&\qquad\nu< j.
        \end{cases}
    \end{align*}
    In particular, if
    \begin{align}\label{dac}
        \kappa:=\min(L+1+d-M+s,\, K-s),
    \end{align}
    then
    \begin{align*}
        2^{\nu s}\left|\mathcal{F}^{-1}\left(\theta_\nu\mathcal{F}\sum_{m\in\zd}\lzjm a_{\qjm}\right)(x)\right|
        \lesssim 2^{-|\nu-j|\delta}\mhl^{(\frac{d}{M})}\left(\sum_{m\in\zd} 2^{j s}\lzjm a_{\qjm}\right)(x),\quad x\in\rd,
    \end{align*}
    where, here and hereafter, $\mhl^{(\eta)}$ denotes the powered Hardy-Littlewood maximal operator, namely,
    $$\mhl^{(\eta)}(g)(x):=\sup_{Q\in\mathcal{Q}_x(\rd)}\left(\frac{1}{|Q|}\int_Q|g(y)|^\eta\,\dint y
    \right)^\frac{1}{\eta}$$
    for all measurable functions $g$ and $0<\eta<\infty$.
\end{lemma}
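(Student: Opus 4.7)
The plan is to analyse the convolution $\phi_\nu\ast a_{\qjm}$, with $\phi_\nu:=\mathcal{F}^{-1}\theta_\nu$, in two regimes according to whether $\nu\geq j$ or $\nu<j$. For $\nu\in\nat$ one has $\phi_\nu(y)=2^{\nu d}\phi(2^\nu y)$ with a fixed $\phi\in\sd$ that is rapidly decreasing and has all vanishing moments, because $\theta_\nu$ vanishes identically in a neighbourhood of the origin. For $\nu=0$ no vanishing moments are available, but this index is only compatible with $\nu\geq j$ and only finitely many terms are involved, so the forthcoming smoothness argument still applies.

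In the regime $\nu\geq j$, exploit the $K$-fold smoothness of the atom. After the substitution $y\mapsto 2^{-\nu}y$,
\begin{equation*}
(\phi_\nu\ast a_{\qjm})(x)=\int_{\rd}\phi(y)\,a_{\qjm}(x-2^{-\nu}y)\,\dint y.
\end{equation*}
Subtract from $a_{\qjm}(x-2^{-\nu}y)$ its order-$(K-1)$ Taylor polynomial at $x$, which is a polynomial in $y$ annihilated by $\phi$, and estimate the remainder via $|\Dd^\alpha a_{\qjm}|\leq 2^{jK}\chi_{c\qjm}$ for $|\alpha|=K$. This produces the factor $2^{-\nu K}\cdot 2^{jK}=2^{-(\nu-j)K}$. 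What remains is $\int|\phi(y)|\,|y|^{K}\chi_{c\qjm}(x-2^{-\nu}y)\,\dint y$, and choosing the decay exponent of $\phi$ large enough to swallow $|y|^{K}$ one recognises the remaining weighted average of $\chi_{c\qjm}$ as being dominated by the powered maximal function $\mhl^{(d/M)}(\kjm)(x)$, by the standard kernel-to-maximal-function estimate.

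In the regime $\nu<j$ (which forces $j\in\nat$, so that $a_{\qjm}$ has vanishing moments up to order $L$), expand $\phi_\nu(x-\cdot)$ in a Taylor polynomial of degree $L$ about the centre $x_{\qjm}$ of the atom's cube; the atom annihilates this polynomial, and the remainder satisfies
\begin{equation*}
|\phi_\nu(x-y)-T_L(y)|\lesssim|y-x_{\qjm}|^{L+1}\sup_{z}|\Dd^{L+1}\phi_\nu(x-z)|\lesssim 2^{-j(L+1)}\,2^{\nu(L+1+d)}(1+2^{\nu}|x-z|)^{-N}
\end{equation*}
on $y\in\supp a_{\qjm}$, with $z$ on the corresponding segment and $N$ arbitrarily large. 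Integrating over the atom's support (whose measure $|\qjm|=2^{-jd}$ combines with the $2^{\nu d}$ above to produce $2^{(\nu-j)d}$) and trading $M$ powers of the kernel decay for a $d/M$-power maximal function yields $2^{(\nu-j)(L+1+d-M)}\mhl^{(d/M)}(\kjm)(x)$, as claimed.

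For the concluding display, multiply each pointwise bound by $2^{\nu s}=2^{(\nu-j)s}2^{js}$: the two case exponents become $-(\nu-j)(K-s)$ and $(\nu-j)(L+1+d-M+s)$, which combine into $2^{-|\nu-j|\kappa}$ with $\kappa$ as defined in \eqref{dac} (so $\delta$ in the displayed conclusion coincides with $\kappa$). To pull the sum over $m$ through, use that the dilates $c\qjm$ have uniformly bounded overlap for fixed $j$, together with the inequality $|a_{\qjm}|\leq\chi_{c\qjm}$ and the sublinearity of $f\mapsto\mhl(|f|^{d/M})^{M/d}$ available as soon as $d/M\leq 1$. The main obstacle, as I see it, is precisely this last step: the pointwise single-atom bound features $\mhl^{(d/M)}(\kjm)$, while the aggregated conclusion features $\mhl^{(d/M)}\!\bigl(\sum_m 2^{js}\lzjm a_{\qjm}\bigr)$, and reconciling these requires careful handling of the (possibly signed) $\lzjm$ via the disjointness of the cubes $\qjm$ versus the bounded overlap of their dilates, while choosing $M>d$ large enough to keep $\kappa>0$ in terms of $s,K,L$.
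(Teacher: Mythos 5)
Your single-atom estimates are fine and follow the standard route (Taylor expansion against the vanishing moments of $\mathcal{F}^{-1}\theta_\nu$ when $\nu\geq j$, against those of the atom when $\nu<j$, then conversion of $(1+2^j|x-x_{\qjm}|)^{-M}$ into $\mhl^{(d/M)}(\kjm)(x)$); the minor imprecisions (intermediate point in the Taylor remainder, the case $\nu=0$) are harmless. Note, however, that the paper itself gives no proof of this lemma -- it is imported verbatim from \cite[Corollary~4.6]{nns16} -- so what you supply must be complete, and it stops exactly at the decisive point, which you yourself flag as ``the main obstacle'': the passage from the atom-wise bounds to the second display.

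That passage cannot be closed along the lines you indicate. First, the right-hand side of the conclusion is the maximal function of the \emph{signed} sum $\sum_m 2^{js}\lzjm\ajm$: choosing two atoms with overlapping supports ($c>1$) and cancelling coefficients so that $\sum_m\lzjm\ajm\equiv 0$ while $\lzjm\neq 0$ makes $\mhl^{(d/M)}\bigl(\sum_m 2^{js}\lzjm\ajm\bigr)\equiv 0$, whereas $\sum_m|\lzjm|\,\mhl^{(d/M)}(\kjm)>0$; hence no handling of bounded overlap can dominate the latter by the former, and ``sublinearity'' of $\mhl^{(d/M)}$ points in the wrong direction anyway (you would need superadditivity). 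Any proof of the aggregated statement must work with the sum as a whole, or -- as in the cited source and as actually suffices for Theorem~\ref{adbf} -- with $\kjm$ in place of $\ajm$ inside the maximal function. Second, even for that weaker $\kjm$-version the summation over $m$ rests on the Frazier--Jawerth lattice lemma
\begin{equation*}
\sum_{m\in\zd}\frac{|\lzjm|}{(1+2^j|x-x_{\qjm}|)^{a}}\lesssim\left[\mhl\left(\sum_{m\in\zd}|\lzjm|^{u}\kjm\right)(x)\right]^{\frac1u},
\end{equation*}
which requires $a>d/u$ strictly; your single-atom bounds carry decay exactly $M$ while you take $u=d/M$, i.e.\ precisely the excluded borderline $a=d/u$, where the inequality fails (in $d=1$, $M=2$, $j=0$: put coefficient $2^{2k}$ on one unit cube at distance $2^{k}$ from the origin, $k=1,\dots,N$; at $x=0$ the left side is $\sim N$, the right side $\sim 1$). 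The correct mechanism is to keep a free, large decay exponent $N>M$ in the single-atom estimates -- at scale $2^{-j}$ for $\nu\geq j$, at scale $2^{-\nu}$ for $\nu<j$ -- and then apply the lattice lemma with $u=d/M$; for $\nu<j$ the counting of $2^{-j}$-cubes inside $2^{-\nu}$-annuli produces the factor $2^{(j-\nu)d/u}=2^{(j-\nu)M}$, and it is this combinatorial factor, not the kernel decay, that generates the exponent $L+1+d-M$. Finally, your closing remark is backwards: $\kappa>0$ forces $M<L+1+d+s$ (together with $K>s$), so $M$ must be chosen \emph{small} enough, the admissible window in the application being $d/\min(1,p,q)<M<L+1+d+s$, cf.\ \eqref{4.13}; and ``$\delta$'' in the displayed conclusion is indeed to be read as (at most) $\kappa$.
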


Now we are ready to prove Theorem~\ref{adbf}, which is inspired by the proofs of \cite[Theorem 4.1]{fj90} and \cite[Theorem 4.5]{nns16}.

\begin{proof}[Proof of Theorem~\ref{adbf}]
\emph{Step 1.} We first prove (i).
Let $f\in\Att(\rd)$, and let $\theta$ and $\upsilon$ be as in \eqref{crf2}. By Lemma~\ref{crf},
we write $f=\sum_{j\in\no}\sum_{m\in\zd}\lzjm\mathcal{F}^{-1}\upsilon_{\qjm}$ in $\sdd$, where
\begin{align*}
    \lz:=\{\lzjm\}_{j\in\no,m\in\zd}:=\{\langle f,\,\mathcal{F}^{-1}\theta_{\qjm}\rangle\}_{j\in\no,m\in\zd}
\end{align*}
satisfies $\|\lz\mid \att(\rd)\|\lesssim\|f\mid \Att(\rd)\|$.
Choose $\vartheta$ which satisfies $\mathcal{F}^{-1}\vartheta\in\sd$,
$\supp \mathcal{F}^{-1}\vartheta\subseteq\{x\in\rd:\,|x|\leq 1\}$,
$\int_{\rd}x^{\beta}\mathcal{F}^{-1}\vartheta(x)\,\dint x=0$ if $|\beta|\leq L$ when $L\geq 0$,
and $|\vartheta(\xi)|\geq C>0$ if $\frac{1}{2}\leq |\xi|\leq 2$ (see \cite[p.783]{fj85} for a detailed construction of $\vartheta$).
By \eqref{3e1} and \eqref{3e2}, there exists a function $\eta$ such that  $\mathcal{F}^{-1}\eta\in\sd$ and
$\mathcal{F}^{-1}\upsilon=\mathcal{F}^{-1}\vartheta\ast\mathcal{F}^{-1}\eta$. Setting
\begin{align*}
    g_k:=\int_{Q_{0,k}}\mathcal{F}^{-1}\vartheta(\cdot-y)\mathcal{F}^{-1}\eta(y)\,\dint y
\end{align*}
for all $k\in\zd$, we have $\mathcal{F}^{-1}\upsilon=\sum_{k\in\zd}g_k$, and, hence, for all $j\in\no$, $m\in\zd$, we set
\begin{align*}
    \mathcal{F}^{-1}\upsilon_{\qjm}:=\sum_{k\in\zd}g_k(2^{j}x-m).
\end{align*}
Note that $\int_{\rd}x^\beta g_k(x)\,\dint x=0$, if $|\beta|\leq L$ when $L\geq 0$ and
$|\Dd^\alpha g_k(x)|\leq C_1 (1+|k|)^{-S}$ for any $S>0$,
where $C_1$ only depends on $S$ and $\alpha$.

For $Q_{j,m}$ with $j\in\no,\,m\in\zd$, set $r_{\qjm}:=C(\lz^\ast_{\min(p,q),\delta})_{\qjm}$ and
\begin{align*}
    a_{\qjm}:=\sum_{m\in\zd}\lz_{\qjm}\frac{g_{k-m}(2^jx-m)}{r_{\qjm}},
\end{align*}
where $C$ will be determined later. From the above representations of $f$ and $\mathcal{F}^{-1}\upsilon$, we deduce that
\begin{align*}
    f=\sum_{j\in\no}\sum_{m\in\zd}r_{\qjm} a_{\qjm} \qquad\text{in}\qquad \sdd.
\end{align*}
Let $S\geq \delta$. Then
\begin{align*}
    |\Dd^\alpha a_{\qjm}(x)|&\leq C_2 2^{j |\alpha|}
    \sum_{R\in\mq, \ \ell(R)=\ell(\qjm)}\frac{|\lz_{R}|(1+\ell(\qjm)^{-1}|x_{R}-x_{\qjm}|)^{-S}}{C (\lz_{\min(p,q),\delta}^\ast)_{\qjm}}\\
    &\leq \frac {C_3 }{C}2^{j |\alpha|},
\end{align*}
where the first inequality follows from the above estimate of $\Dd^\alpha g_k$ and
the last inequality follows from (reverse) H\"older's inequality if $\min(p,q)>1$,
or the monotonicity of $\ell^q$-norm in $q$ if $\min(p,q)\leq 1$.
Taking $C$ large enough yields \eqref{ac2}. Clearly \eqref{ac3} holds. Thus, each $a_{\qjm}$ is an atom for $\Att(\rd)$.
Moreover, let $\delta>d$, by Lemma~\ref{el2}, we see that
\begin{align*}
    \|r\mid \btts(\rd)\|\lesssim\|\lz\mid\btts(\rd)\|\lesssim\|f\mid\btt(\rd)\|,
\end{align*}
which finishes the proof of (i) of Theorem~\ref{adbf}.

\emph{Step 2.} We now prove (ii). Let $\{a_{\qjm}\}_{j\in\no,m\in\zd}$ be a family of $(K,L,c)$-atoms and
$\lz=\{\lzjm\}_{j\in\no,m\in\zd}\in\att(\rd)$.
We first assume that there exists $M\gg 1$ such that $\lzjm=0$ if $j\geq M$. This implies
\begin{align*}
    f:=\sum_{j=0}^\infty\sum_{m\in\zd}\lzjm \ajm
\end{align*}
converges in $\sdd$.

\emph{Step 2.1.} Choose $M>0$ such that
\begin{align}\label{4.13}
\frac{d}{\min(1,p,q)}<M<L+d+1+s.
\end{align}
Then $\kappa$ given by \eqref{dac} is positive.  By Lemma~\ref{fmd}, (i) of Proposition~\ref{ggl}, \eqref{maxp1} and \eqref{4.13},
\begin{align*}
    &\frac{1}{\varphi(\ell(P))}\left\{\sum_{\nu=\jjp}^\infty\left[\int_P\left(\sum_{j=0}^\infty
    2^{\nu s}\left|\mathcal{F}^{-1}\left(\theta_\nu\mathcal{F}\sum_{m\in\zd}\lzjm \ajm\right)(x)\right|\right)^p\,\dint x
    \right]^\frac{q}{p}\right\}^\frac{1}{q}\\
    &\quad\lesssim\frac{1}{\varphi(\ell(P))}\left\{\sum_{\nu=\jjp}^\infty\left[\int_P\left(\sum_{j=0}^\infty
    2^{-|\nu-j|\delta}\mhl^{(\frac{d}{M})}\left(\sum_{m\in\zd}2^{j s}\lzjm \ajm\right)(x)\right)^p\,\dint x
    \right]^\frac{q}{p}\right\}^\frac{1}{q}\\
    &\quad\lesssim\frac{1}{\varphi(\ell(P))}\left\{\sum_{j=\jjp}^\infty
    \left[\int_P\left(\mhl^{(\frac{d}{M})}\left(\sum_{m\in\zd}2^{j s}\lzjm \ajm\right)(x)\right)^p\,\dint x
    \right]^\frac{q}{p}\right\}^\frac{1}{q}\\
    &\quad\lesssim\frac{1}{\varphi(\ell(P))}\left\{\sum_{j=\jjp}^\infty
    \left[\int_P\left(\sum_{m\in\zd}2^{j s}\lzjm \ajm\right)^p\,\dint x
    \right]^\frac{q}{p}\right\}^\frac{1}{q}.
\end{align*}
Thus, $\|f\mid \btt(\rd)\|\lesssim\|\lz\mid\btts(\rd)\|$.

Similarly, by \eqref{4.13}, Lemma~\ref{fmd}, (ii) of Proposition~\ref{ggl} and \eqref{maxp2}, we have
\begin{align*}
    &\frac{1}{\varphi(\ell(P))}\left\{\int_P\left[
    \sum_{\nu=\jjp}^\infty\left(\sum_{j=0}^\infty
    2^{\nu s}\left|\mathcal{F}^{-1}\left(\theta_\nu\mathcal{F}\sum_{m\in\zd}\lzjm \ajm\right)(x)\right|\right)^q
    \right]^\frac{p}{q}\,\dint x\right\}^\frac{1}{p}\\
    &\quad\lesssim\frac{1}{\varphi(\ell(P))}\left\{\int_P\left[\sum_{\nu=\jjp}^\infty\left(\sum_{j=0}^\infty
    2^{-|\nu-j|\delta}\mhl^{(\frac{d}{M})}\left(\sum_{m\in\zd}2^{j s}\lzjm \ajm\right)(x)\right)^q
    \right]^\frac{p}{q}\,\dint x\right\}^\frac{1}{p}\\
    &\quad\lesssim\frac{1}{\varphi(\ell(P))}\left\{\int_P
    \left[\sum_{j=\jjp}^\infty\left(\mhl^{(\frac{d}{M})}\left(\sum_{m\in\zd}2^{j s}\lzjm \ajm\right)(x)\right)^q
    \right]^\frac{p}{q}\,\dint x\right\}^\frac{1}{p}\\
    &\quad\lesssim\frac{1}{\varphi(\ell(P))}\left\{\int_P
    \left[\sum_{j=\jjp}^\infty\left(\sum_{m\in\zd}2^{j s}\lzjm \ajm\right)^q
    \right]^\frac{p}{q}\,\dint x\right\}^\frac{1}{p},
\end{align*}
which yields $\|f\mid \ftt(\rd)\|\lesssim\|\lz\mid\ftts(\rd)\|$.
Hence, we show that (ii) of Theorem~\ref{adbf} is proved for $\lz$ satisfying that there exists $M\gg1$
such that $\lzjm=0$ if $j\geq M$.

\emph{Step 2.2} Next we remove the assumption in Step 2.1. Set
\begin{align*}
    f_j:=\sum_{m\in\zd}\lz_{\qjm}a_{\qjm},\qquad j\in\no.
\end{align*}
Choose $\rho>0$ such that
\begin{align*}
    L\geq \max(-1,\whole{\sigma_p-s+\rho}),\quad\text{for}\,\,\, B\text{-spaces}
    \qquad\text{and}\qquad
    L\geq \max(-1,\whole{\sigma_{p,q}-s+\rho}),\quad\text{for}\,\,\, F\text{-spaces}.
\end{align*}
Then according to what we have proved, we obtain
\begin{align*}
    \|f_j\mid A_{p,q}^{s-\rho,\varphi}(\rd)\|\lesssim 2^{-\rho j}\|\lz\mid a_{p,q}^{s,\varphi}(\rd)\|\qquad\text{and}\qquad
    \left\|\sum_{j=1}^M f_j\mid \Att(\rd)\right\|\leq C_4 \|\lz\mid a_{p,q}^{s,\varphi}(\rd)\|,
\end{align*}
where $C_4$ is independent of $M$. Therefore, $f=\sum_{j\in\no}f_j$ converges in $A_{p,q}^{s-\rho,\varphi}(\rd)$ and hence in $\sdd$. By letting $M\rightarrow \infty$ and the Fatou property, we complete the proof of Theorem~\ref{adbf}.
\end{proof}


\vfill\bigskip~

{\small
\noindent
Dorothee D. Haroske\\
Institute of Mathematics \\
Friedrich Schiller University Jena\\
07737 Jena\\
Germany\\
{\tt dorothee.haroske@uni-jena.de}\\[4ex]
Zhen Liu\\
Institute of Mathematics \\
Friedrich Schiller University Jena\\
07737 Jena\\
Germany\\
{\tt zhen.liu@uni-jena.de}\\[4ex]
}

\end{document}